\newcommand{\into}{\hookrightarrow}
\newcommand{\Z}{\mathbb{Z}}
\newcommand{\R}{\mathbb{R}}
\newcommand{\C}{\mathbb{C}}
\DeclareMathOperator{\im}{Im}
\DeclareMathOperator{\re}{Re}
\renewcommand{\Re}{\re}
\renewcommand{\Im}{\im}
\renewcommand{\O}{\mathcal{O}}
\newcommand{\CP}{\mathbb{CP}}
\newcommand{\RP}{\mathbb{RP}}
\newcommand{\QP}{\mathbb{QP}}
\renewcommand{\H}{\mathbb{H}}
\newcommand{\ML}{\mathcal{ML}}
\newcommand{\PML}{\mathbb{P}\mathcal{ML}}
\newcommand{\X}{\mathcal{X}}
\newcommand{\GF}{\mathcal{GF}}
\newcommand{\QF}{\mathcal{QF}}
\newcommand{\T}{\mathcal{T}}
\newcommand{\Rh}{\mathcal{R}}
\DeclareMathOperator{\PSL}{\mathrm{PSL}}
\DeclareMathOperator{\PGL}{\mathrm{PGL}}
\DeclareMathOperator{\SL}{\mathrm{SL}}
\DeclareMathOperator{\Hom}{Hom}
\newcommand{\pmat}[4]{\begin{pmatrix}{#1} & {#2}\\{#3} & {#4}\end{pmatrix}}
\numberwithin{equation}{section}
\theoremstyle{plain}
\newtheorem{thm}{Theorem}[section]
\newtheorem{cor}[thm]{Corollary}
\newtheorem{lemma}[thm]{Lemma}
\newtheorem{prop}[thm]{Proposition}
\newenvironment{definition}[1][Definition.]{\begin{trivlist}
\item[\hskip \labelsep {\bfseries #1}]}{\end{trivlist}}
\theoremstyle{definition}
\theoremstyle{definition}
\begin{document}
\title[a family of non-injective skinning maps with critical points]{a family of non-injective skinning maps\\ with critical points}
\author{Jonah Gaster}

\address{
Department of Mathematics, Statistics, and Computer Science\\
University of Illinois - Chicago\\
\tt{gaster@math.uic.edu}
}

\date{December 21, 2012}

\begin{abstract} 
{Certain classes of 3-manifolds, following Thurston, give rise to a `skinning map', a self-map of the Teichm\"{u}ller space of the boundary. This paper examines the skinning map of a 3-manifold $M$, a genus-2 handlebody with two rank-1 cusps. We exploit an orientation-reversing isometry of $M$ to conclude that the skinning map associated to $M$ sends a specified path to itself, and use estimates on extremal length functions to show non-monotonicity and the existence of a critical point. A family of finite covers of $M$ produces examples of non-immersion skinning maps on the Teichm\"{u}ller spaces of surfaces in each even genus, and with either $4$ or $6$ punctures.} 
\end{abstract}
\maketitle
\section{Introduction}
Thurston introduced the {\bf skinning map} as a tool for locating hyperbolic structures on certain closed 3-manifolds, an integral part of his celebrated proof of the Hyperbolization Theorem for Haken manifolds ~\cite{thurston}. Apart from a class of simple cases, explicit examples of skinning maps remain unexplored. One reason for this is that the skinning map uses the deformation theory of Ahlfors and Bers to pass back and forth from a conformal structure on the boundary of a 3-manifold to a hyperbolic structure on its interior. Like the uniformization theorem of Riemann surfaces, an explicit formula for the resulting map is typically out of reach.

Let $M$ be an orientable compact 3-manifold with nonempty incompressible boundary $\Sigma:=\partial M$, such that the interior of $M$ admits a geometrically finite hyperbolic structure. Given a point $X\in\T(\Sigma)$ in the Teichm\"{u}ller space of $\Sigma$, the 3-manifold $M$ has a quasi-Fuchsian cover with $X$ on one side and $\overline{Y}\in\T(\overline{\Sigma})$ on the other, where $\overline{\cdot}$ indicates a reversal of orientation. The skinning map of $M$, $\sigma_{M}$, is given by $\sigma_{M}(X)=\overline{Y}$. Thurston's key result about skinning maps is the Bounded Image Theorem: if $M$ is acylindrical, then $\sigma_{M}(\T(\Sigma))$ is contained in a compact set. 

Thurston described how to locate hyperbolic structures on a class of closed Haken 3-manifolds by iteration of $\tau\circ\sigma_{M}$, where $\tau$ is the `gluing map', an isometry from $\T(\overline{\Sigma})$ to $\T(\Sigma)$. There are further questions, and results, about `how effectively' iteration of $\tau\circ\sigma_{M}$ locates fixed points. ~\cite{kent} shows that the diameter of the image of $\sigma_{M}$ is controlled by constants depending only on the volume of $M$ and the topology of $\Sigma$. If $M$ is without rank-1 cusps ~\cite{dumas-kent} shows that $\sigma_{M}$ is non-constant. This result has an improvement in ~\cite{dumas}, which shows that $\sigma_{M}$ is open and finite-to-one. 

The only example for which there is an explicit formula for $\sigma_{M}$ is when $M$ is an interval bundle over a surface, in which case $\sigma_{M}$ is a diffeomorphism. Thus it is consistent with the current literature to ask: 

\begin{itemize}
\item Are skinning maps necessarily diffeomorphisms onto their images? 
\item Are they necessarily immersions? 
\end{itemize}

In this paper, we present a negative answer to these questions. The reader should note that we work in the category of pared 3-manifolds and hyperbolic structures with rank-1 cusps. This allows low-dimensional calculations, in which $\sigma_{M}$ is `simply' a holomorphic map $\sigma_{M}:\H\to\overline{\H}$. 

\begin{thm}{There exists a pared 3-manifold $M=(H_{2},P)$, where $H_{2}$ is a genus-2 handlebody and $P$ consists of two rank-1 cusps, whose skinning map $\sigma_{M}$ is not one-to-one and has a critical point.}
\label{thm}
\end{thm}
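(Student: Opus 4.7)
The plan, following the abstract, is to exploit an orientation-reversing symmetry of $M$ to reduce the complex skinning map to a real-analytic self-map of an arc, then to detect non-monotonicity of that real map via extremal-length estimates; both conclusions of the theorem will follow simultaneously.

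First I would realize $H_{2}$ as a regular neighborhood of a planar theta-graph in $\R^{3}$ and place the two parabolic annuli comprising $P$ symmetrically with respect to the reflection $\iota$ through the plane of the graph. Cutting $\partial H_{2}$ along the two distinguished curves yields a single planar surface with four boundary components, so $\Sigma$ is a four-punctured sphere and $\T(\Sigma)\cong\H$, consistent with $\sigma_{M}\colon\H\to\overline{\H}$. The reflection $\iota$ preserves $P$ and restricts to an orientation-reversing diffeomorphism of $\Sigma$, which induces an antiholomorphic involution $\iota_{*}$ of $\T(\Sigma)$. Applying $\iota$ to a quasi-Fuchsian uniformization of $M$ swaps the two conformal ends, giving an equivariance between $\sigma_{M}$ and $\iota_{*}$. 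The fixed set $\gamma$ of $\iota_{*}$ is a Poincar\'e geodesic in $\T(\Sigma)$, and the equivariance forces $\sigma_{M}(\gamma)$ to lie on the corresponding fixed geodesic in $\T(\overline{\Sigma})$; after identifying the latter with $\gamma$ via the symmetry, we obtain a real-analytic self-map $\sigma_{M}|_{\gamma}\colon\gamma\to\gamma$.

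I would then parameterize $\gamma$ by a real coordinate so that its two ends correspond to pinching two distinguished $\iota$-invariant simple closed curves on $\Sigma$. At each end the extremal length of the pinching curve on $X_{t}\in\gamma$ tends to $0$, while the hyperbolic geometry of the associated quasi-Fuchsian manifold, controlled by collar bounds, pleated-surface length estimates, and the handlebody topology of $H_{2}$, constrains the extremal length of that curve on $\sigma_{M}(X_{t})$. The goal is to establish behavior at the two ends of $\gamma$ that is incompatible with monotonicity of $\sigma_{M}|_{\gamma}$, for example by showing that both ends of $\gamma$ are sent to a single end. A real-analytic non-monotonic self-map of an interval is necessarily non-injective and possesses a critical point; and since $\sigma_{M}$ is holomorphic and preserves the antiholomorphic fixed locus $\gamma$, its derivative along $\gamma$ is real, so a real critical point of $\sigma_{M}|_{\gamma}$ is a genuine complex critical point of $\sigma_{M}$. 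Both conclusions of Theorem~\ref{thm} then follow.

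The principal obstacle is the extremal-length analysis: with no explicit formula for $\sigma_{M}$ available, inferring the direction of motion on $\gamma$ requires extracting qualitative geometric information from the Ahlfors--Bers construction, using the handlebody topology of $H_{2}$ --- in particular, which curves on $\partial H_{2}$ are compressible --- to constrain the hyperbolic geometry of quasi-Fuchsian covers at points where $\gamma$ degenerates.
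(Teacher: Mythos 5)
Your high-level strategy is exactly right and matches the paper's: use an orientation-reversing symmetry of $M$ to show that $\sigma_{M}$ preserves a real one-dimensional locus in $\T(\Sigma)$, then detect non-monotonicity of the restricted real-analytic map and observe (correctly, as you argue via the real derivative along the fixed locus) that this forces a genuine critical point of the holomorphic $\sigma_{M}$. But there are two substantive gaps.

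First, the symmetry you propose is probably not the right one. A reflection $\iota$ through the plane of a theta-graph is an orientation-reversing \emph{involution} fixing $\Sigma$; as such it will typically fix all four punctures (or swap them in pairs in a way that commutes with the hyperelliptic Klein four-group, which already acts trivially on $\T(\Sigma_{0,4})$). The paper instead uses an \emph{order-four} orientation-reversing mapping class $\Psi$ which cyclically permutes the four punctures, and this is not cosmetic: the key Lemma~\ref{fix lam} shows that such a $\Psi$ fixes exactly two points $\{\xi,\eta\}$ of $\PML(\Sigma)$, which is what pins the bending laminations of the convex core boundary down to multiples of $\xi$ and $\eta$ and makes the rest of the analysis tractable. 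A generic orientation-reversing involution of $\Sigma_{0,4}$ can fix a circle in $\PML(\Sigma)\cong\RP^{1}$, in which case you lose the control on bending laminations entirely. You would need to check that your $\iota$ actually has the right action on $\PML$, and if it fixes all punctures it will not.

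Second, and more seriously, the crucial step is left as a plan rather than a proof. You propose ``showing that both ends of $\gamma$ are sent to a single end'' via collar bounds and pleated-surface length estimates, but you give no argument that this is even true, let alone how to establish it. This is not how the paper proceeds: the paper constructs an explicit one-parameter family of representations $\rho_{t}$ via face-pairings of a regular ideal octahedron, computes the bending angle $\theta(t)$ and lengths $\ell(\xi,\Gamma_{t}),\ell(\eta,\Gamma_{t})$ of the convex core boundary from traces, uses the projective grafting description of the skinning image to write it as an explicit quadrilateral $Q_{t}$, and then proves non-monotonicity of $\mathrm{Mod}(Q_{t})$ by direct containment of normalized quadrilaterals at three sample values $t=1,\tfrac{1}{2},\tfrac{2}{5}$. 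None of the qualitative tools you name (collar lemmas, pleated-surface estimates) are used, and it is unclear that they could be made to yield the kind of two-sided extremal-length estimate you would need. You also do not verify that $(H_{2},P)$ is actually pared and acylindrical with the chosen $P$ (the paper invokes Otal's disk-busting criterion), nor that a path of geometrically finite structures with the needed symmetry exists. Without filling the non-monotonicity step in with a concrete argument, the proposal remains a plausible outline, not a proof.
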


Briefly, the proof of Theorem \ref{thm} uses an orientation-reversing isometry of $M$ to conclude that the skinning map sends a certain real 1-dimensional submanifold of $\T(\Sigma)$ to itself. A parametrization of this line by extremal length shows that this restricted map is not one-to-one, and hence the skinning map has a critical point. That is, this proof produces a (real) 1-parameter path of quasi-Fuchsian groups $\left\{Q\left(X_{t},\sigma_{M}\left(X_{t}\right)\right)\right\}$, where $X_{t}$ and $\sigma_{M}(X_{t})$ are both contained in a line in $\T(\Sigma)$, such that $t\mapsto X_{t}$ is injective and $t\mapsto\sigma_{M}\left(X_{t}\right)$ is not.

By passing to finite covers of the manifold from Theorem \ref{thm}, this example implies a family of examples of skinning maps with critical points, on Teichm\"{u}ller spaces of arbitrarily high dimension. Below, $\Sigma_{g,n}$ indicates a topological surface of genus $g$ with $n$ punctures.

\begin{cor}{There exists a family of pared 3-manifolds $\left\{M_{n}=\left(H_{n},P_{n}\right)\right\}^{\infty}_{n=2}$, each admitting a geometrically finite hyperbolic structure, satisfying:
\begin{itemize}
\item For $n$ even, the paring locus $P_{n}$ consists of two rank-1 cusps and the boundary has topological type $\partial M_{n}\cong\Sigma_{n-2,4}$.
\item For $n$ odd, the paring locus $P_{n}$ consists of three rank-1 cusps and the boundary has topological type $\partial M_{n}\cong\Sigma_{n-3,6}$.
\end{itemize} For each integer $n\ge2$, the skinning map $\sigma_{M_{n}}$ has a critical point.}
\label{cor}
\end{cor}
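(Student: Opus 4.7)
The plan is to realize each $M_n$ as a finite cyclic cover of the manifold $M$ from Theorem~\ref{thm}, then transfer the critical point of $\sigma_M$ to $\sigma_{M_n}$ via the natural commutation between skinning maps and covering pullback.

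Since $H_2$ is a genus-2 handlebody, $\pi_1(M) \cong F_2$ is free on two generators, and the two rank-1 cusps of $M$ are represented by conjugacy classes of words $c_1, c_2 \in F_2$. For each $n \geq 2$ I would define a surjective homomorphism $\phi_n : F_2 \twoheadrightarrow \Z/(n-1)$ and let $M_n$ be the associated cyclic cover. The choice of $\phi_n$ depends on the parity of $n$: for $n$ even, arrange that both $\phi_n(c_1)$ and $\phi_n(c_2)$ generate $\Z/(n-1)$, so each cusp of $M$ lifts to a single rank-1 cusp in $M_n$, giving two cusps total; for $n$ odd, arrange $\phi_n(c_1)$ to generate while $\phi_n(c_2)$ has order exactly $(n-1)/2$, so that $c_1$ lifts to one cusp and $c_2$ lifts to two, giving three total. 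A finite cover of a handlebody is again a handlebody, and by Nielsen--Schreier $\pi_1(M_n)$ is free of rank $n$, so $\partial M_n$ is closed of genus $n$. Cutting along the $2$ (respectively $3$) annular lifts of the paring locus and balancing Euler characteristic then yields $\partial M_n \cong \Sigma_{n-2,4}$ (respectively $\Sigma_{n-3,6}$), and the geometrically finite hyperbolic structure on $M$ pulls back to each $M_n$.

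The critical point of $\sigma_M$ transfers to $\sigma_{M_n}$ via the commuting square
\begin{equation*}
\sigma_{M_n} \circ p^* \;=\; p^* \circ \sigma_M,
\end{equation*}
where $p^* : \T(\partial M) \hookrightarrow \T(\partial M_n)$ is the holomorphic immersion induced by pullback of conformal structures along the covering of boundaries. Commutativity follows from the fact that the quasi-Fuchsian cover of $M$ associated to $X \in \T(\partial M)$ pulls back to the quasi-Fuchsian cover of $M_n$ associated to $p^* X$. Given a critical point $X_0$ of $\sigma_M$ provided by Theorem~\ref{thm}, pick a nonzero $v \in \ker d\sigma_M|_{X_0}$; since $p^*$ is immersive, $d p^*(v) \neq 0$, and by the commuting square $d p^*(v) \in \ker d\sigma_{M_n}|_{p^* X_0}$. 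Because $\sigma_{M_n}$ is a self-map between Teichm\"uller spaces of equal complex dimension, a nontrivial kernel at $p^* X_0$ means it is a critical point of $\sigma_{M_n}$.

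The main obstacle is the middle step: for each $n$ one must exhibit an explicit surjection $\phi_n$ realizing the required cusp-lifting behavior, and confirm by Euler-characteristic and boundary-component bookkeeping that the pared boundary of $M_n$ has the claimed topological type. The parity dichotomy in the corollary reflects the arithmetic fact that $\Z/(n-1)$ admits elements of order exactly $(n-1)/2$ precisely when $n-1$ is even, i.e., $n$ odd, which is what enables the splitting of one cusp into two lifts; when $n-1$ is odd, the natural choice sends both cusp classes to generators and no such splitting occurs.
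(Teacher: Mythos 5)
Your strategy is the same as the paper's: realize each $M_n$ as a finite cyclic cover of $M=(H_2,P)$ and transfer the critical point via the pullback $p^*$ and the commuting relation $\sigma_{M_n}\circ p^* = p^*\circ\sigma_M$, which is exactly the content of the paper's Proposition~\ref{cover}. Where you diverge is in how the covers are produced. The paper constructs the covers geometrically, arranging $\Sigma_g$ as a wheel with a $\Z/(g-1)$-rotation, choosing an explicit fundamental domain $B$ for $\Sigma'\to\Sigma$, and then reading off by inspection that $P_{2g}$ has two components while $P_{2g+1}$ has three. You instead prescribe the cover algebraically via a surjection $\phi_n:F_2\twoheadrightarrow\Z/(n-1)$ and count preimages of the cusp classes by the standard orbit/index formula. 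Both routes are sound; the geometric one buys immediate visual verification of component counts, connectivity, and non-separation, while the algebraic one replaces pictures with a short gcd computation.

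The gap is that you do not actually exhibit $\phi_n$ or verify the required orders of $\phi_n(c_1),\phi_n(c_2)$, and you flag this yourself as the main obstacle. It is worth noting that it is genuinely easy to close: using the paper's computations $\iota_*(\delta_1)=A^{-2}B$, $\iota_*(\delta_2)=B^{-2}A^{-1}$, the cusp classes abelianize to $-2a+b$ and $-a-2b$. Taking $\phi_n(A)=0$, $\phi_n(B)=1$ in $\Z/(n-1)$ gives $\phi_n(c_1)=1$ (a generator) and $\phi_n(c_2)=-2$, whose order is $(n-1)/\gcd(2,n-1)$. Thus the cusp count is $1+\gcd(2,n-1)$: two when $n-1$ is odd and three when $n-1$ is even, exactly the claimed dichotomy, with a single uniform choice of $\phi_n$ rather than separate constructions per parity. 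The other thing your Euler-characteristic bookkeeping silently assumes is that the lifted annuli are non-separating and that $\partial M_n$ is connected; connectivity follows because $\phi_n(c_1)$ already generates $\Z/(n-1)$, so $\phi_n$ restricted to $\pi_1\Sigma$ is surjective and $\Sigma'$ is connected, and then $\chi$ together with the count of $2k$ boundary circles pins down $\Sigma_{n-k,2k}$. You should state these verifications explicitly rather than gesturing at ``balancing Euler characteristic,'' since Euler characteristic alone does not determine topological type without connectivity and boundary count. Also a small slip: you write ``$\partial M_n$ is closed of genus $n$'' where you mean $\partial H_n$.

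Finally, your transfer argument is correct as stated: $p^*$ is a holomorphic immersion (the paper proves this by averaging over sheets to show injectivity of the pullback on $H^1(\Sigma,(\mathrm{sl}_2\C)_\rho)$), so a nonzero $v\in\ker d\sigma_M|_{X_0}$ pushes to a nonzero element of $\ker d\sigma_{M_n}|_{p^*X_0}$, and equal complex dimensions of domain and codomain make this a critical point. This is equivalent to the paper's phrasing in terms of a tangency between the exit set $\mathcal{E}_M$ and the Bers slice $\mathcal{B}_Y$ lifting to a tangency upstairs.
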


This work owes a debt of inspiration to numerical experiments developed by Dumas-Kent ~\cite{dumas-kent2} which suggest the presence of a critical point of the skinning map associated to the manifold in Theorem \ref{thm}. Their work also examines some other closely related manifolds and skinning maps, at least one of which appears to be a diffeomorphism onto its image. It would be interesting to have a unified understanding of these different behaviours.

The reader may note that the proof of Theorem \ref{thm} is only one of existence. It does not identify a critical point, determine the number of critical points, or determine any local degrees of $\sigma_{M}$ near critical points. However, for this skinning map there is some evidence of a unique simple critical point in the experiments of Dumas-Kent. Numerical tools developed by the author support this observation, and, curiously, suggest that the critical point occurs where $M$ has hexagonal convex core boundary (that is, as a point in moduli space the convex core boundary is isomorphic to the Poincar\'{e} metric on $\C\setminus\{0,1,e^{\pi i/3}\}$). This would seem to imply some geometrical significance to the critical point, which remains elusive.

In \S\ref{background}, we introduce background and notation. The proof that critical points of skinning maps persist under certain finite covers is found in \S\ref{finite covers}. In \S\ref{example} and \S\ref{param def path} we introduce the 3-manifold relevant to Theorem \ref{thm}, and a path of geometrically finite structures on it, noting that this path maintains some important symmetry. In \S\ref{4p} we study the implications of this symmetry on the geometry of a four-punctured sphere. We apply some of these implications in \S\ref{cc bound} to the convex core boundaries of our path. This information is used in \S\ref{non-injectivity} which collects the main ideas for the proof of Theorem \ref{thm}, deferring some computations to \S\ref{compute} and the Appendix. Finally, \S\ref{covers} presents a family of finite covers of the example from Theorem \ref{thm}, which proves Corollary \ref{cor}.

\section{Background and Notation}
\label{background}
Let $\Sigma=\Sigma_{g,n}$ be a smooth surface of genus $g$ with $n$ punctures.

Let $\mathcal{S}$ denote the set of nonperipheral unoriented simple closed curves on $\Sigma$ and $\ML(\Sigma)$ the space of measured geodesic laminations. Recall the natural inclusion $\R_{+}\times\mathcal{S}\into\ML(\Sigma)$, which has dense image. The quotient of $\ML(\Sigma)$ under the action of multiplication by positive weights will be written $\PML(\Sigma)$. See \cite{casson-bleiler} and \cite{flp} for details.

Recall that the {\it Teichm\"{u}ller space} of $\Sigma$, denoted $\T(\Sigma)$, is the set of marked complex structures (equivalently, marked hyperbolic structures, via uniformization) on $\Sigma$ up to marking equivalence. Recall that $\T(\Sigma)$ is naturally a complex manifold homeomorphic to $\C^{6g-6+2n}$. 

For a smooth manifold $N$, possibly with boundary, define $MCG^{*}(N):=\nicefrac{\mathrm{Diff}(N)}{\mathrm{Diff}_{0}(N)}$, the {\it extended mapping class group} of $N$. Pre-composition of a marking with a diffeomorphism of the surface $\Sigma$ descends to an action of $MCG^{*}(\Sigma)$ on $\T(\Sigma)$. Note that we allow orientation-reversing diffeomorphisms. When $\Sigma$ is the boundary of a 3-manifold $M$, there is a restriction map $r:MCG^{*}(M)\to MCG^{*}(\Sigma)$. See \cite{lehto}, \cite{farb-margalit}, and \cite{flp} for details about $\T(\Sigma)$ and $MCG^{*}(\Sigma)$.

A {\it Kleinian group} $\Gamma$ is a discrete subgroup of $\PSL(2,\C)$. The action of $\Gamma$ on $\CP^{1}$ has a maximal domain of discontinuity $\Omega_{\Gamma}$, and its complement is the {\it limit set}, $\Lambda_{\Gamma}$. One component of the complement of a totally geodesic plane in $\H^{3}$ is a {\it supporting half-space} for a connected component of $\Omega_{\Gamma}$---or, less specifically, a supporting half-space for $\Gamma$---if it meets the conformal boundary $\CP^{1}$ in that component, and the closure of the intersection with $\CP^{1}$ intersects $\Lambda_{\Gamma}$ in at least two points. The totally geodesic boundary of a supporting half-plane for $\Gamma$ is a {\it support plane} for $\Gamma$. The {\it convex hull} of $\Gamma$ is the complement of the union of all of its supporting half-spaces and the {\it convex core} of $\Gamma$ is the quotient of the convex hull by $\Gamma$. When an $\epsilon$-neighborhood of the convex core is finite volume, $\Gamma$ is {\it geometrically finite}. See \cite{maskit} and \cite{canepsgreen} for details.

Let $N$ be a topological space with finitely-generated fundamental group and let $G$ be an algebraic Lie group. Then $\Hom(\pi_{1}N,G)$ naturally has the structure of an algebraic variety. There is a conjugation action of $G$ on $\Hom(\pi_{1}N,G)$, and the quotient, interpreted in the sense of geometric invariant theory, is the {\it $G$-character variety of $N$}. We will denote the $G$-character variety of $N$ by $\X(N,G)$ or, when $G$ is understood to be $\PSL_{2}\C$, by $\X(N)$. Recall that there is a natural holomorphic structure that $\X(N)$ inherits from $\PSL_{2}\C$. See \cite{goldman1} and \cite{goldman} for details.

A Kleinian group $\Gamma$ is {\it Fuchsian} if it is conjugate, in $\PSL_{2}\C$, into $\PSL_{2}\R$, and it is {\it quasi-Fuchsian} if it is conjugate under a quasiconformal map into $\PSL_{2}\R$. We denote by $\QF(\Sigma)$ the locus of $\X(\Sigma)$ consisting of faithful representations with quasi-Fuchsian image without accidental parabolics. That is, for $[\rho]\in\QF(\Sigma)$, the image $\rho(\gamma)$ is parabolic if and only if $\gamma$ is homotopic to a boundary component of $\Sigma$. Recall Bers' Theorem \cite{bers},
\begin{thm}[Simultaneous Uniformization]{There exists an identificaiton $\mathrm{AB}_{\Sigma}:\QF(\Sigma)\cong\T(\Sigma)\times\T(\overline{\Sigma})$, a biholomorphism of complex manifolds.}
\label{simunif}
\end{thm}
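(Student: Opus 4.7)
The plan is to construct $\mathrm{AB}_\Sigma$ directly from the limit-set structure of a quasi-Fuchsian group, and invert it using the measurable Riemann mapping theorem of Ahlfors--Bers.

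First I would define the map. Given $[\rho] \in \QF(\Sigma)$ with image $\Gamma := \rho(\pi_1\Sigma)$, quasi-Fuchsian means $\Gamma$ is the quasiconformal conjugate of some Fuchsian group; the conjugating quasiconformal map of $\CP^1$ sends $\RP^1\cup\{\infty\}$ to the limit set $\Lambda_\Gamma$, which is therefore a Jordan curve. Consequently $\Omega_\Gamma$ has exactly two $\Gamma$-invariant disk components $\Omega^+,\Omega^-$, and the quotients $\Omega^{\pm}/\Gamma$ are Riemann surfaces. Equipped with markings induced by $\rho$ they yield points $X\in\T(\Sigma)$ and $\overline Y\in\T(\overline\Sigma)$, and the absence of accidental parabolics ensures that cusps of the quotients correspond to the punctures of $\Sigma$. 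Set $\mathrm{AB}_\Sigma([\rho]):=(X,\overline Y)$; since conjugation in $\PSL_2\C$ preserves the marked conformal structures on the quotients, this descends from $\Hom(\pi_1\Sigma,\PSL_2\C)$ to $\X(\Sigma)$.

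For the inverse, given $(X,\overline Y)$ I would choose a Fuchsian representation $\rho_0:\pi_1\Sigma\to\PSL_2\R$ uniformizing $X$ on the upper half-plane $\H$. The point $\overline Y$ corresponds to a $\rho_0$-equivariant Beltrami differential $\mu$ on $\overline\H$ with $\|\mu\|_\infty<1$; extend $\mu$ by zero across $\RP^1\cup\{\infty\}$ to all of $\CP^1$. The measurable Riemann mapping theorem produces a quasiconformal homeomorphism $f:\CP^1\to\CP^1$ satisfying $\overline\partial f=\mu\,\partial f$, uniquely determined once normalized to fix three points. Then $\rho:=f\rho_0 f^{-1}$ lies in $\QF(\Sigma)$: its domain of discontinuity is $f(\H)\sqcup f(\overline\H)$, and since $\mu$ vanishes on $\H$ the quotient $f(\H)/\rho(\pi_1\Sigma)$ recovers $X$, while the quotient of $f(\overline\H)$ is $\overline Y$ by the choice of $\mu$. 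Uniqueness of the normalized solution and uniqueness of the Fuchsian uniformization modulo $\PSL_2\R$ show that this inverts $\mathrm{AB}_\Sigma$ on conjugacy classes.

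The substantive step is holomorphicity, for which I would invoke the Ahlfors--Bers theorem on holomorphic dependence of the normalized solution to the Beltrami equation on the parameter $\mu$. Packaged via the Bers embedding of $\T(\overline\Sigma)$ as a bounded domain inside the vector space of holomorphic quadratic differentials on $\H/\rho_0(\pi_1\Sigma)$, the inverse construction becomes a holomorphic map $\T(\Sigma)\times\T(\overline\Sigma)\to\X(\Sigma)$ taking values in $\QF(\Sigma)$. A dimension count---both sides are complex manifolds of dimension $6g-6+2n$---together with bijectivity then upgrades this to a biholomorphism. The main technical obstacle in implementing this plan is the careful bookkeeping of markings and conjugation ambiguities, so that the Beltrami-equation construction descends cleanly to the character variety and to Teichm\"uller space, rather than living only on the level of representations and Beltrami classes with chosen base points.
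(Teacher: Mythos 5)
The paper does not prove this statement: it is recalled as Bers' 1960 theorem with a citation to \cite{bers}, so there is no in-paper argument to compare against. Your sketch is the classical argument of Bers and is essentially correct: define the forward map by the two marked quotient surfaces of the domain of discontinuity, and invert by extending a $\rho_0$-equivariant Beltrami differential by zero across $\RP^1\cup\{\infty\}$ and solving the Beltrami equation. One place you gloss over slightly is \emph{joint} holomorphicity in both factors of $\T(\Sigma)\times\T(\overline\Sigma)$: as written, the inverse construction fixes a Fuchsian group $\rho_0$ uniformizing $X$ and varies $\mu$ on $\overline\H$, which most directly gives holomorphicity in $\overline{Y}$ for fixed $X$. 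The standard fix is to fix a single base Fuchsian group once and for all and parametrize \emph{both} factors simultaneously by Beltrami coefficients supported on $\H$ and on $\overline\H$ respectively, then invoke Ahlfors--Bers holomorphic dependence of the normalized solution on the full coefficient $\mu\in L^\infty(\CP^1)$; the well-definedness of the resulting map on $\T(\Sigma)\times\T(\overline\Sigma)$ (independence of representative Beltrami coefficients, equivariance under change of base point) is precisely the bookkeeping you flag at the end. Also, the phrase ``dimension count \dots upgrades this to a biholomorphism'' is shorthand for the fact that a holomorphic bijection between complex manifolds is automatically biholomorphic, which requires knowing in advance that $\QF(\Sigma)$ is an open complex submanifold of $\X(\Sigma)$; this follows from the openness of your inverse map (or from quasiconformal stability), and is worth stating rather than leaving implicit in ``dimension count.''
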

When $\rho$ is faithful, we identify $[\rho]\in\X(N)$ with the conjugacy class of its image $\rho(\pi_{1}N)$ in $\PSL_{2}\C$. We write $Q$ for the map $\mathrm{AB}_{\Sigma}^{-1}$, so that, for $X,Y\in\T(\Sigma)$, we have $Q(X,\overline{Y})\in\QF(\Sigma)$.

A {\it pared 3-manifold} is a pair, $(H_{0},P)$, where $H_{0}$ is a compact 3-manifold with boundary and $P$ is a disjoint union of incompressible annuli and tori in $\partial H_{0}$, such that:
\begin{itemize}
\item $P$ contains all of the tori in $\partial H_{0}$
\item Any embedded cylinder \(\left(S^{1}\times I,S^{1}\times\partial I\right)\into(H_{0},P)\) is homotopic, relative to the boundary, into $P$
\end{itemize}
By the `boundary' of a pared 3-manifold, $\partial(H_{0},P)$, we mean $\partial H_{0}\setminus P$. We say that $(H_{0},P)$ is {\it acylindrical} if any embedded cylinder \(\left(S^{1}\times I,(S^{1}\times\partial I)\right)\into\left(H_{0},\partial (H_{0},P)\right)\) is homotopic, relative to the boundary, into $\partial(H_{0},P)$. See \cite[p.~434]{mcmullen1} for details.

When $M=(H_{0},P)$ is a pared 3-manifold, we denote by $\X(M)\subset\X(H_{0})$ the locus of points for which the image of curves homotopic into $P$ are parabolic. The subset $\GF(M)\subset\X(M)$ consists of $[\rho]\in\X(M)$ such that $\rho\left(\pi_{1}H_{0}\right)$ is a geometrically finite Kleinian group, and such that $\rho$ has no accidental parabolics. That is, the image $\rho(\gamma)$ is parabolic if and only if $\gamma$ is homotopic into $P$. Let $\mathrm{Diff}(H_{0},P)$ be the diffeomorphisms of $H_{0}$ that preserve the set $P$ and $MCG^{*}(M):=\nicefrac{\mathrm{Diff}(H_{0},P)}{\mathrm{Diff}_{0}(H_{0},P)}$. Note that $MCG^{*}(M)$ acts on $\X(M)$ preserving $\GF(M)$. 

Since $\QF(\Sigma)$ identifies with $\GF(\Sigma\times[0,1])$, it is natural to seek a generalization of Bers' Uniformization Theorem to 3-manifolds other than $\Sigma\times[0,1]$. Such a theorem is provided by the deformation theory developed by Ahlfors, Bers, Kra, Marden, Maskit, Sullivan, and others. We refer to this identification as the `Ahlfors-Bers' parameterization:

\begin{thm}[Ahlfors-Bers Parameterization]{For a pared 3-manifold $M=(H_{0},P)$ with incompressible boundary, there exists an identification $\mathrm{AB}_{M}:\GF(M)\cong\T(\partial M)$, a biholomorphism of complex manifolds.}
\end{thm}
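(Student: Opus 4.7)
The plan is to build $\mathrm{AB}_M$ by taking the conformal boundary of the Kleinian manifold associated to a representation, and to construct its inverse by quasiconformal deformation of a fixed base representation. For the forward map, given $[\rho]\in\GF(M)$ with image $\Gamma:=\rho(\pi_1 H_0)$, I would form the Kleinian manifold $N_\Gamma:=(\H^3\cup\Omega_\Gamma)/\Gamma$. By Ahlfors's finiteness theorem $\Omega_\Gamma/\Gamma$ is a finite union of finite-type Riemann surfaces, and geometric finiteness together with Marden's tameness provides a homotopy equivalence $H_0\setminus P\to N_\Gamma$ that restricts to a homeomorphism $\partial M\to\Omega_\Gamma/\Gamma$. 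The resulting marked Riemann surface is defined to be $\mathrm{AB}_M([\rho])$.

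For the inverse, fix a base point $[\rho_0]\in\GF(M)$ with image $\Gamma_0$ and let $X_0\in\T(\partial M)$ be its marked conformal boundary. Given $X\in\T(\partial M)$, choose a Beltrami differential $\mu_0$ on $X_0$ whose associated quasiconformal deformation realizes $X$; lift $\mu_0$ to a $\Gamma_0$-invariant Beltrami differential $\tilde\mu$ on $\Omega_{\Gamma_0}\subset\CP^1$, and extend by zero across the limit set. The measurable Riemann mapping theorem supplies a unique quasiconformal $f^{\tilde\mu}:\CP^1\to\CP^1$ fixing $0,1,\infty$ and solving $\bar\partial f=\tilde\mu\,\partial f$, and then $\rho_{\tilde\mu}:=f^{\tilde\mu}\circ\rho_0(\cdot)\circ(f^{\tilde\mu})^{-1}$ is the candidate preimage of $X$. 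By construction $\Omega_{\rho_{\tilde\mu}}=f^{\tilde\mu}(\Omega_{\Gamma_0})$, and the conformal boundary of the associated Kleinian manifold is identified with $X$ as a marked surface.

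Mutual inversion and holomorphicity then follow from standard machinery. Independence from the choice of $\mu_0$ is Sullivan's theorem that the limit set of a geometrically finite Kleinian group carries no invariant line field, so two choices producing the same $X$ yield $\PSL(2,\C)$-conjugate representations. Holomorphic dependence of $f^{\tilde\mu}$ on $\tilde\mu$ (Ahlfors-Bers), combined with the complex structure $\X(H_0)$ inherits from $\PSL(2,\C)$, gives biholomorphicity in both directions.

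The main obstacle is verifying that the inverse construction actually lands in $\GF(M)$: the deformed representation must remain discrete and faithful, the parabolic locus specified by $P$ must be preserved without creating accidental parabolics, and geometric finiteness must persist under the qc deformation. These closure properties are consequences of Marden's Isomorphism Theorem and Maskit--Sullivan deformation theory, and the incompressibility hypothesis on $\partial M$ is essential both for identifying the marking on $\Omega_\Gamma/\Gamma$ with one on $\partial M$ and for ruling out bumping in the deformation space.
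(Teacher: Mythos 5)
The paper does not give a proof of this theorem at all: it is presented as a background result coming from ``the deformation theory developed by Ahlfors, Bers, Kra, Marden, Maskit, Sullivan, and others,'' and no argument is offered. There is therefore no in-paper proof to compare against. That said, your sketch is a faithful and essentially correct synopsis of the standard proof from the literature: the forward map via the marked conformal boundary $\Omega_\Gamma/\Gamma$ with marking coming from Marden's homotopy equivalence (using incompressibility to match components of $\partial M$ with components of $\Omega_\Gamma/\Gamma$), the inverse via lifting a Beltrami differential to $\Omega_{\Gamma_0}$, extending by zero on $\Lambda_{\Gamma_0}$, solving with the measurable Riemann mapping theorem, and conjugating; Sullivan's rigidity theorem (absence of invariant line fields on the limit set) is exactly the ingredient that makes the inverse well-defined and hence makes the two maps mutually inverse; holomorphicity comes from Ahlfors--Bers holomorphic dependence of $f^{\tilde\mu}$ on $\tilde\mu$ together with the complex structure on $\X(H_0)$. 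You also correctly flag the genuinely nontrivial closure step, that quasiconformal deformation preserves discreteness, faithfulness, geometric finiteness, and the prescribed parabolic locus without introducing accidental parabolics, and correctly attribute it to Marden and Maskit--Sullivan. Since the paper only cites this result, the right framing is that your sketch accurately records what those citations refer to, rather than offering an alternative route to something proved in the paper.
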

\noindent Note that $\partial M$ may be disconnected, in which case $\T(\partial M)$ is the product of the Teichm\"{u}ller spaces of the components. 
\vspace{.4cm}

For the remainder of the section, let $M=(H_{0},P)$ be a pared 3-manifold, such that $\Sigma=\partial M$ is incompressible and connected. Fix a choice of basepoints for $M$ and $\Sigma$, and a path connecting them, so that inclusion induces the well-defined homomorphism $\iota_{*}:\pi_{1}\Sigma\into\pi_{1}M$ and the restriction morphism $\iota^{*}$ on representations.

Suppose that $\left[\hat{\Gamma}\right]\in\GF(M)$ has $\Omega_{\hat{\Gamma}}\ne\emptyset$, and let $\Gamma:=\iota^{*}\hat{\Gamma}$. The choices above fix an identification of universal covers $\widetilde{M}\cong\H^{3}$ and $\widetilde{\Sigma}\cong U_{0}$, for some component $U_{0}\subset\Omega_{\hat{\Gamma}}$. In this case, $\Gamma=\mathrm{Stab}_{\hat{\Gamma}}(U_{0})$ and \cite[Cor.~6.5.]{marden} shows that $\Gamma$ must be geometrically finite. Because $\Sigma$ is incompressible in $M$, the domain $U_{0}$ must be simply-connected. Because $\hat{\Gamma}$ is without accidental parabolics, $\Gamma$ is too. We may conclude:

\begin{lemma}{For $\left[\hat{\Gamma}\right]\in\GF(M)$ satisfying $\Omega_{\hat{\Gamma}}\ne\emptyset$, and $\Gamma=\iota^{*}\hat{\Gamma}$, we have $[\Gamma]\in\QF(\Sigma)$.}
\label{restrqF}
\end{lemma}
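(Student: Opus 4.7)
The plan is to combine the ingredients collected in the paragraphs immediately preceding the lemma and invoke the Bers--Maskit classification of b-groups (finitely generated Kleinian groups with a simply connected invariant component of the ordinary set): any such group is either quasi-Fuchsian, totally or partially degenerate, or contains accidental parabolics.

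First I would verify faithfulness: by incompressibility $\iota_*\colon\pi_1\Sigma\to\pi_1 M$ is injective and $\hat\Gamma$ is faithful since $[\hat\Gamma]\in\GF(M)$, so $\Gamma=\iota^{*}\hat\Gamma$ is faithful. Next, $\Gamma\subset\hat\Gamma$ forces $\Lambda_\Gamma\subset\Lambda_{\hat\Gamma}$, so $U_0\subset\Omega_\Gamma$ is a simply connected $\Gamma$-invariant domain, exhibiting $\Gamma$ as a b-group. The text has already noted that $\Gamma$ is geometrically finite by \cite[Cor.~6.5.]{marden}, which rules out the (totally or partially) degenerate possibilities in the classification. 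Finally, I would transfer the ``no accidental parabolic'' property from $\hat\Gamma$ to $\Gamma$: a parabolic element of $\Gamma$ is parabolic in $\hat\Gamma$, so the underlying curve in $\Sigma$ is freely homotopic in $M$ into a component of $P$, and since $\Sigma=\partial M$ is incompressible and $M$ is pared, such a curve must already be peripheral in $\Sigma$. The classification then forces $\Gamma$ to be quasi-Fuchsian, so $[\Gamma]\in\QF(\Sigma)$.

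The step I expect to require the most care is the last one: a simple closed curve on $\Sigma$ that becomes peripheral in $M$ need not, a priori, be peripheral in $\Sigma$ --- this would fail if $M$ contained an essential annulus running into $P$. The second defining axiom of a pared manifold is precisely the tool that rules this out, applied to an annulus realizing the homotopy of the parabolic curve into $P$; this is the point at which the full pared hypothesis (as opposed to, say, mere incompressibility) is essential.
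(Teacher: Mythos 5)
Your argument follows the same skeleton as the paper's: Marden's Corollary~6.5 gives geometric finiteness of $\Gamma$, incompressibility of $\Sigma$ gives simple connectivity of the invariant component $U_{0}$, and one then transfers the no-accidental-parabolic property from $\hat{\Gamma}$ to $\Gamma$; your explicit appeal to the b-group classification at the end is a reasonable way to make precise what the paper leaves implicit in concluding quasi-Fuchsianity.

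The difficulty is with the final step --- which you correctly flag as the delicate one --- where the tool you cite does not actually do the work. The second pared axiom controls embedded cylinders $\left(S^{1}\times I,\,S^{1}\times\partial I\right)\into(H_{0},P)$, that is, cylinders with \emph{both} boundary circles in $P$. An annulus realizing a free homotopy of a non-peripheral $\gamma\subset\Sigma$ into $P$ has one boundary circle on $\Sigma=\partial H_{0}\setminus P$ and one in $P$, so it falls outside that axiom's hypothesis. Sliding the $P$-end across an annular component of $P$ to one of its boundary circles produces instead an essential annulus with both ends on $\Sigma$, whose exclusion is the content of \emph{acylindricality} --- but the lemma is stated for an arbitrary pared $M$ with incompressible connected $\Sigma$, and the paper does not assume acylindricality in this section. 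The fact you need (that parabolics of $\Gamma$ are exactly the peripheral elements of $\pi_{1}\Sigma$) is true, and the paper itself asserts it without further argument; the standard justification, however, is geometric rather than a direct consequence of the pared axioms. For example, the component of the convex-core boundary of $\nicefrac{\H^{3}}{\hat{\Gamma}}$ facing $U_{0}$ is a pleated surface carrying a finite-area intrinsic hyperbolic metric, and via the pleating map $\Gamma$ acts on its universal cover as a cofinite-area Fuchsian group, for which parabolics and cusps coincide and are peripheral.
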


Note that the assumption $\Omega_{\hat{\Gamma}}\ne\emptyset$ rules out a fibered hyperbolic 3-manifold, where the Kleinian group corresponding to the fiber is geometrically infinite, and the conclusion of Lemma \ref{restrqF} fails. 

\begin{definition}
{For $\left[\hat{\Gamma}\right]\in\GF(M)$ satisfying $\Omega_{\hat{\Gamma}}\ne\emptyset$, and $\Gamma=\iota^{*}\hat{\Gamma}$, the domains $\Omega_{\Gamma}$ and $\Omega_{\hat{\Gamma}}$ share a connected component, which we refer to as the {\it top} of $\Gamma$. By Lemma \ref{restrqF}, the domain $\Omega_{\Gamma}$ has one other component, which we refer to as the {\it bottom}. When clear from context, we may refer to the quotient of the top component of $\Omega_{\Gamma}$ as the top of $\Gamma$.}
\end{definition}

Lemma \ref{restrqF} makes Thurston's definition of the skinning map possible. Below, we assume the connectedness of $\Sigma$ (for $\Sigma$ disconnected, see \cite{otal1}).

\begin{definition}
{The skinning map $\sigma_{M}:\T(\Sigma)\to\T(\overline{\Sigma})$ fits into the following commutative diagram:
$$\xymatrix{
\T(\Sigma)\ar[drrrrr]_{\sigma_{M}}\ar[rr]^-{\mathrm{AB}_{M}} && \GF(M)\ar[r]^{\iota^{*}} & \QF(\Sigma)\ar[rr]^-{\mathrm{AB}_{\Sigma}} && \T(\Sigma)\times\T(\overline{\Sigma})\ar[d]^{proj_{2}} \\
 & & &&& \T(\overline{\Sigma})
}$$
In words, $\iota^{*}\circ\mathrm{AB}_{M}:\T(\Sigma)\to\QF(\Sigma)$ associates to $X\in\T(\Sigma)$ a quasi-Fuchsian structure $Q(X,\overline{Y})\in\QF(\Sigma)$, where the top of $Q(X,\overline{Y})$ is $X$, and the bottom is $\overline{Y}$. The skinning map is given by $\sigma_{M}(X)=\overline{Y}$.}\end{definition}

Because each of the maps above is holomorphic, $\sigma_{M}$ is holomorphic. As a result of the naturality of the Ahlfors-Bers identifications $\sigma_{M}$ is equivariant for the actions of $MCG^{*}(M)$, $MCG^{*}(\Sigma)$, and the restriction map $r:MCG^{*}(M)\to MCG^{*}(\Sigma)$. As a consequence,

\begin{prop}{ For each $\phi\in MCG^{*}(M)$, the fixed point set \;$\mathrm{Fix}\;r(\phi)$ is preserved by the skinning map, i.e.~$\sigma_{M}(\mathrm{Fix}\;r(\phi))\subset\mathrm{Fix}\;r(\phi)$.}
\label{prop fix}
\end{prop}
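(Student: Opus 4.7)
The plan is to derive the statement as a direct consequence of the $MCG^{*}(M)$-equivariance of the skinning map that was noted immediately before the proposition. Concretely, I aim to establish that for every $\phi \in MCG^{*}(M)$ and every $X \in \T(\Sigma)$,
$$ \sigma_{M}\bigl(r(\phi)\cdot X\bigr) \;=\; r(\phi)\cdot\sigma_{M}(X), $$
where $r(\phi)\in MCG^{*}(\Sigma)$ acts on both $\T(\Sigma)$ and $\T(\overline{\Sigma})$ via the usual change-of-marking action (the action on $\T(\overline{\Sigma})$ makes sense because, as a smooth manifold, $\overline{\Sigma}=\Sigma$ and $MCG^{*}$ is defined using all diffeomorphisms). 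Given this identity, if $X \in \mathrm{Fix}\;r(\phi)$ then
$$ \sigma_{M}(X) \;=\; \sigma_{M}\bigl(r(\phi)\cdot X\bigr) \;=\; r(\phi)\cdot\sigma_{M}(X), $$
so $\sigma_{M}(X) \in \mathrm{Fix}\;r(\phi)$, which is what we want.

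To justify the equivariance identity, I would chase through the commutative diagram defining $\sigma_{M}$ and check naturality at each stage. For $\mathrm{AB}_{M}\colon\T(\Sigma)\to\GF(M)$, the action of $\phi \in MCG^{*}(M)$ by pullback on $\GF(M) \subset \X(M)$ corresponds, under $\mathrm{AB}_{M}$, to the action of $r(\phi)$ on $\T(\Sigma)$ by change of marking on $\partial M$; this is the naturality of the Ahlfors--Bers parameterization. The restriction $\iota^{*}\colon\GF(M)\to\QF(\Sigma)$ is $MCG^{*}(M)$-equivariant by construction, where the $MCG^{*}(M)$-action on $\QF(\Sigma)$ factors through $r$. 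Finally, Bers' biholomorphism $\mathrm{AB}_{\Sigma}\colon\QF(\Sigma)\to\T(\Sigma)\times\T(\overline{\Sigma})$ is $MCG^{*}(\Sigma)$-equivariant for the diagonal action, so projection to the second factor commutes with $r(\phi)$ as well. Composing these equivariances gives the displayed identity.

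There is essentially no technical obstacle; the only point requiring a moment of care is that $r(\phi)$ must be interpreted as acting on both $\T(\Sigma)$ and $\T(\overline{\Sigma})$, which is allowed since we work in the extended mapping class group (including orientation-reversing diffeomorphisms). Once the equivariance is in place, the proposition follows in a single line as above.
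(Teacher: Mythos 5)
Your proof is correct and matches the paper's approach: the paper deduces the proposition directly from the $MCG^{*}$-equivariance of $\sigma_{M}$, which it attributes to the naturality of the Ahlfors--Bers identifications, exactly as you do. You have merely spelled out the one-line consequence ($X$ fixed $\Rightarrow \sigma_{M}(X)$ fixed) and the diagram-chase behind the equivariance, both of which the paper leaves implicit.
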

Note that the two copies of $\mathrm{Fix}\;r(\phi)$ in Proposition \ref{prop fix} lie in Teichm\"{u}ller spaces of surfaces with opposite orientations. Because there is a canonical anti-holomorphic isometry $\overline{\cdot}:\T(\overline{\Sigma})\to\T(\Sigma)$, we may sometimes elide this difference and view the image of the skinning map in $\T(\Sigma)$. 

The reader should note that the existence of any nontrivial mapping class of the 3-manifold $M$ is not assured. In the 3-manifold from Theorem \ref{thm}, this existence is an essential tool in our analysis, allowing a reduction of dimension.

\section{Skinning Maps of Finite Covers of $M$}
\label{finite covers}

Let $p:(M',\Sigma')\to(M,\Sigma)$ be a finite covering of manifolds with boundary. We will denote the restriction $p|_{\Sigma'}$ by $p$ as well.

\begin{prop}{Suppose that $\sigma_{M}$ has a critical point at $X\in\T(\Sigma)$. Then $\sigma_{M'}$ has a critical point at $p^{*}X\in\T(\Sigma')$.}
\label{cover}
\end{prop}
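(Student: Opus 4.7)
The plan is to establish the naturality of the skinning map under finite covers, namely that $\sigma_{M'}\circ p^* = p^*\circ\sigma_M$, and then to deduce the proposition by differentiating and using that $p^*$ is a holomorphic immersion.

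For the commutative diagram
$$\xymatrix{
\T(\Sigma)\ar[r]^-{\sigma_M}\ar[d]_{p^*} & \T(\overline\Sigma)\ar[d]^{p^*} \\
\T(\Sigma')\ar[r]^-{\sigma_{M'}} & \T(\overline{\Sigma'}),
}$$
I would fix $X\in\T(\Sigma)$, set $\hat\Gamma = \mathrm{AB}_M^{-1}(X)$, and consider the restriction $\hat\Gamma' := \hat\Gamma|_{\pi_1 M'}$. Since $\hat\Gamma'$ has finite index in $\hat\Gamma$, it is geometrically finite with the same limit set, so $\Omega_{\hat\Gamma'} = \Omega_{\hat\Gamma}$. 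The induced conformal structure on $\Omega_{\hat\Gamma}/\hat\Gamma'$ is the pullback of $X$ along $p$, and so $\mathrm{AB}_{M'}^{-1}(p^*X) = \hat\Gamma'$. Setting $\Gamma = \iota^*\hat\Gamma$ and $\Gamma' = \iota^*\hat\Gamma'$, both are quasi-Fuchsian with common domain of discontinuity $U\sqcup V$ (top and bottom, by Lemma \ref{restrqF}). Then $V/\Gamma = \sigma_M(X)$ while quotienting by the finite-index subgroup gives $V/\Gamma' = p^*\sigma_M(X)$, and by the definition of the skinning map this last quotient is $\sigma_{M'}(p^*X)$, establishing commutativity.

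Next, I would observe that $p^*:\T(\Sigma)\to\T(\Sigma')$ is a holomorphic immersion. Representing tangent vectors by harmonic Beltrami differentials, $dp^*$ acts by pointwise pullback along the local diffeomorphism $p$; since $p^*\mu$ vanishes only if $\mu$ does, $dp^*$ is injective at every point. The same reasoning applies to $p^*:\T(\overline\Sigma)\to\T(\overline{\Sigma'})$.

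Given these two ingredients, the conclusion follows from the chain rule: if $v\in T_X\T(\Sigma)$ is a nonzero vector with $d\sigma_M|_X(v) = 0$, as exists by hypothesis, then
$$d\sigma_{M'}|_{p^*X}\bigl(dp^*|_X(v)\bigr) = dp^*|_{\sigma_M(X)}\bigl(d\sigma_M|_X(v)\bigr) = 0,$$
and $dp^*|_X(v)\ne 0$ by the immersion property, so $p^*X$ is a critical point of $\sigma_{M'}$. The main obstacle is the commutativity step, which requires carefully verifying that the Ahlfors--Bers parametrization and the top/bottom decomposition of the quasi-Fuchsian domain of discontinuity behave properly under restriction to the finite-index subgroup $\pi_1 M'\subset\pi_1 M$; once that is in hand, the rest is purely formal.
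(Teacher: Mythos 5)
Your argument is correct and follows the same architecture as the paper's proof: naturality of the Ahlfors--Bers parametrization under finite covers, an immersion statement for $p^*$, and a tangent-space argument. The differences are in implementation. The paper phrases the critical-point condition geometrically, as a tangency in $\QF(\Sigma)$ between the Bers slice $\mathcal{B}_Y$ and the extension locus $\mathcal{E}_M := \iota^*\GF(M)$, and checks that tangencies lift; you state the chain rule $d\sigma_{M'}\circ dp^* = dp^*\circ d\sigma_M$ directly, which is cleaner. For the immersion step, the paper works in the character variety, identifying $T_{[\rho]}\X(\Sigma)$ with $H^1\left(\Sigma,(\mathrm{sl}_2\C)_\rho\right)$ and invoking the averaging argument to see that finite covers induce injective pullbacks on group cohomology, whereas you work directly in Teichm\"uller space with harmonic Beltrami differentials, where injectivity of pointwise pullback along a local biholomorphism is immediate (pullback of a harmonic Beltrami differential is harmonic because the hyperbolic covering is a local isometry). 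Both are standard; yours is somewhat more elementary. One small omission: before any of this you need $\Sigma'$ incompressible in $M'$ for $\sigma_{M'}$ to be defined. The paper notes this follows because a compressing disk for $\Sigma'$ in $M'$ would push forward under $p$ to a curve on $\Sigma$ lying in the kernel of $\pi_1\Sigma\into\pi_1 M$; you should include this check.
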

\begin{proof}
{Denote the image $\mathcal{E}_{M}:=\iota^{*}\GF(M)\subset\QF(\Sigma)$, and the {\it Bers slice} through $Y\in\T(\Sigma)$ by $\mathcal{B}_{Y}:=\{Q(X,\overline{Y})\;|\;X\in\T(\Sigma)\}\subset\QF(\Sigma)$ (that is, quasi-Fuchsian manifolds with $\overline{Y}\in\T(\overline{\Sigma})$ on the bottom). Note that $\overline{Y}=\sigma_{M}(X)$ if and only if $\mathcal{E}_{M}$ and $\mathcal{B}_{Y}$ intersect at $Q(X,\overline{Y})$. If $\overline{Y}=\sigma_{M}(X)$, then $\sigma_{M}$ has a critical point at $X$ if and only if $\mathcal{E}_{M}$ and $\mathcal{B}_{Y}$ have a tangency at $Q(X,\overline{Y})$.

Note that $\Sigma'$ is incompressible in $M'$, since a compressing curve in $\Sigma'$ would push-forward under $p$ to a curve on $\Sigma$ in the kernel of $\pi_{1}\Sigma\into\pi_{1}M$. We may thus consider the following diagram:
\begin{equation}
\label{diagram:commute:covers}
\xymatrix{
\T(\Sigma') \ar[rr]^{\mathrm{AB}_{M'}} && \GF(M') \ar[r]^{\iota^{*}} & \QF(\Sigma') \ar[rr]^{\mathrm{AB}_{\Sigma'}} && \T(\Sigma')\times\T(\overline{\Sigma}')\\
\T(\Sigma) \ar[u]^{p^{*}}\ar[rr]_{\mathrm{AB}_{M}} && \GF(M) \ar[u]^{p^{*}}\ar[r]_{\iota^{*}} & \QF(\Sigma) \ar[u]^{p^{*}}\ar[rr]_{\mathrm{AB}_{\Sigma}} && \T(\Sigma)\times\T(\overline{\Sigma}) \ar[u]^{p^{*}}
}\end{equation}
The maps $p^{*}$ above each correspond to lifting structures via $p$. For example, the natural map induced by restriction $\GF(M)\to\GF(M')$ can be interpreted as lifting hyperbolic structures via $p$.

In the center of the diagram, each map is given by restricting a representation to a subgroup. This is induced by the commutative diagram of subgroups,
\[\xymatrix{
\pi_{1}\Sigma'\ar@{}[r]|-*[@]{\subset} & \pi_{1}M'\\
\pi_{1}\Sigma\ar@{}[u]|-*[@]{\cap}\ar@{}[r]|-*[@]{\subset} & \pi_{1}M \ar@{}[u]|-*[@]{\cap}
}\]
and hence the center of diagram (\ref{diagram:commute:covers}) commutes.

The outside pieces of diagram (\ref{diagram:commute:covers}) commute because the map $\mathrm{AB}_{M}$ is natural with respect to passing to finite covers: Given a point $[\Gamma]\in\GF(M)$, let $\Gamma':=p^{*}(\Gamma)$, let $\mathrm{AB}_{M}^{-1}[\Gamma]=X$, and let $X':=\mathrm{AB}_{M'}^{-1}[\Gamma']$. Because $\Gamma'<\Gamma$ is finite-index, the tops of the domains of discontinuity of $\Gamma$ and $\Gamma'$ are the same set. Thus $X'$ holomorphically covers $X$, with compatible markings so that $p^{*}(X)=X'$. 

Consider a critical point $X\in\T(\Sigma)$, with $\overline{Y}=\sigma_{M}(X)$, i.e.~a tangency between $\mathcal{B}_{Y}$ and $\mathcal{E}_{M}$ in $\QF(\Sigma)$. By commutativity of the diagram, $p^{*}\mathcal{B}_{Y}\subset\mathcal{B}_{Y'}$, where $Y'=p^{*}(Y)$, and $p^{*}\mathcal{E}_{M}\subset\mathcal{E}_{M'}$. Thus, in order to see a critical point of $\sigma_{M'}$ at $p^{*}X$, it is enough to observe that $p^{*}:\QF(\Sigma)\to\QF(\Sigma')$ is an immersion: A tangency between $\mathcal{B}_{Y}$ and $\mathcal{E}_{M}$ at $Q(X,\overline{Y})$ lifts to a tangency between $\mathcal{B}_{Y'}$ and $\mathcal{E}_{M'}$ at $p^{*}Q(X,\overline{Y})=Q(p^{*}X,p^{*}\overline{Y})$.

In order to see that $p^{*}:\QF(\Sigma)\to\QF(\Sigma')$ is an immersion, we must consider the smooth structure on $\X(\Sigma)$. One interpretation of this structure identifies $T_{[\rho]}\X(\Sigma)$ with $H^{1}\left(\Sigma,(\mathrm{sl}_{2}\C)_{\rho}\right)$, the vector space of 1-forms with values in the flat $\mathrm{sl}_{2}\C$-bundle on $\Sigma$ associated to $\rho$. (See \cite{goldman1} for details about the infinitesimal deformation theory of $\X(\Sigma)$). The restriction map $p^{*}:\X(\Sigma)\to\X(\Sigma')$ naturally induces the pullback map $p^{*}:H^{1}\left(\Sigma,(\mathrm{sl}_{2}\C)_{\rho}\right)\to H^{1}\left(\Sigma',(\mathrm{sl}_{2}\C)_{p^{*}\rho}\right)$ on tangent spaces.

Finite covering maps induce injective pullback maps on cohomology groups: If the pullback $p^{*}\phi$ is a coboundary $df$, one may average $f$ over the finite sheets of the cover to obtain a form that descends, showing that $\phi$ was also a coboundary. Hence $d_{[\rho]}p^{*}:T_{[\rho]}\X(\Sigma)\to T_{p^{*}[\rho]}\X(\Sigma')$ is injective, $p^{*}:\X(\Sigma')\to\X(\Sigma)$ is an immersion, and $\sigma_{M'}$ has a critical point at $p^{*}X$.}
\end{proof} 

\section{The Example 3-manifold}
\label{example}
\begin{figure}[h]
	\begin{minipage}[]{.26\linewidth}
		\centering
		\includegraphics[height=9.cm]{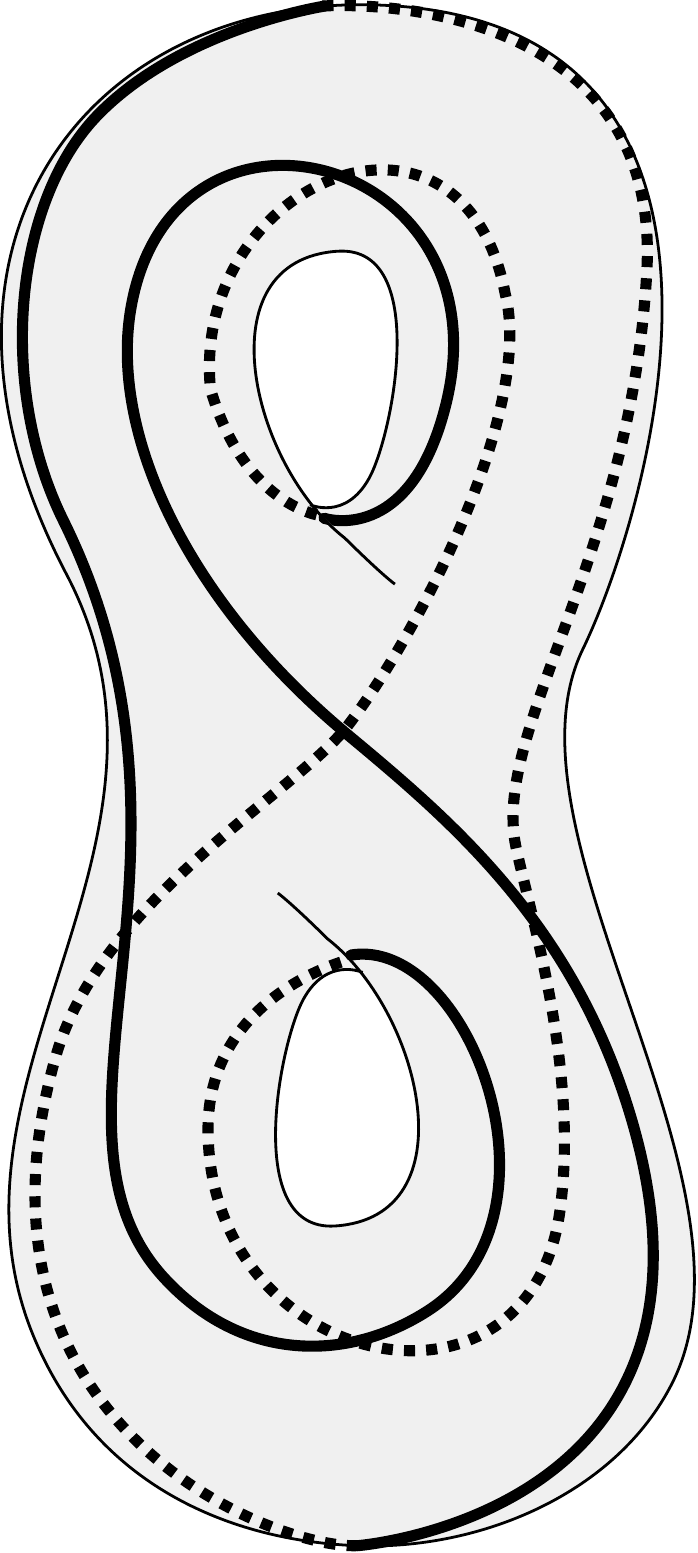}
		\caption{Core curves of annuli in $P$}
		\label{MP1}
	\end{minipage}
	\hspace{1cm}
	\begin{minipage}[]{.26\linewidth}
		\centering
		\begin{lpic}[clean]{MP1send2(,9.cm)}
			\Large
			\lbl[bl]{55,138;$A$}
			\lbl[bl]{14,18;$B$}
			\lbl[]{52,88;$x$}
		\end{lpic}
		\caption{Generators for $\pi_{1}(H_{2},x)$}
		\label{MP2}
	\end{minipage}
	\hspace{1cm}	
	\begin{minipage}[]{.26\linewidth}
		\centering
		\begin{lpic}[clean]{MP3send2(,9.cm)}
			\Large
			\lbl[]{41,73;$\delta_{1}$}
			\lbl[]{69,62;$\delta_{2}$}
			\lbl[]{56,105;$\delta_{3}$}
			\lbl[]{07,30;$\delta_{4}$}
		\end{lpic}
		\caption{Generators for $\pi_{1}(\Sigma,x)$}
		\label{MP3}
	\end{minipage}
\end{figure}

Consider the following pared 3-manifold $M=(H_{2},P)$: Let $H_{2}$ denote the closed genus-2 handlebody and $P$ the union of the annuli obtained as regular neighborhoods of the curves pictured in Figure \ref{MP1}. Fix a choice of basepoint $x\in\partial H_{2}\setminus\overline{P}$, and the presentation $\pi_{1}(H_{2},x)=\langle A,B\rangle$ (see Figure \ref{MP2}). For a natural choice of basis for $\pi_{1}\left(\Sigma_{2,0},x\right)=\langle a_{1},b_{1},a_{2},b_{2}\;|\;[a_{1},b_{1}][a_{2},b_{2}]=1\rangle$ the conjugacy classes of the core curves of $P$ are $\{\left[b_{1}a_{1}b_{1}b_{2}\left[a_{1},b_{1}\right]\right],\left[b_{2}a_{2}b_{2}b_{1}\right]\}$\footnote{The reader is warned of the notational offense that the choices above force $\pi_{1}\left(\Sigma_{2,0},x\right)\to\pi_{1}(M,x)$ to be given by $b_{1}\mapsto A$ and $b_{2}\mapsto B$. Though inconvenient, $a_{i}$ and $b_{i}$ will play no further role in our analysis, and the reader may ignore $\pi_{1}\left(\Sigma_{2,0},x\right)$.}.

Since the annuli in $P$ are {\it disk-busting} in the genus-2 surface, \cite[Lem.~1.15.]{otal2} guarantees that the boundary is incompressible and $(H_{2},P)$ is acylindrical. Since the annuli in $P$ are non-separating, $\Sigma=\partial M$ is a 4-holed sphere. Fix the presentation $\pi_{1}(\Sigma,x)=\langle \delta_{1},\delta_{2},\delta_{3},\delta_{4}\;|\;\prod\delta_{i}=1\rangle$, with the $\delta_{i}$ as pictured in Figure \ref{MP3}: $\delta_{i}$ takes the path drawn to a boundary component and winds around it counter-clockwise. Deleting $P$ from $\partial H_{2}$ we arrive at a topological picture of $\Sigma$, shown in Figure \ref{4holes}.

\begin{figure}[h]
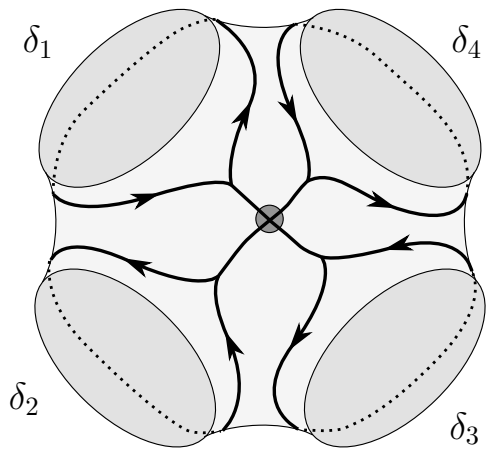

	\begin{lpic}[clean]{4holespheresend(6cm)}
		\Large
		\lbl[]{2,205;$\delta_{1}$}
		\lbl[]{218,205;$\delta_{4}$}
		\lbl[]{-5,25;$\delta_{2}$}
		\lbl[]{217,10;$\delta_{3}$}
	\end{lpic}
	\caption{Another view of $\pi_{1}(\Sigma,x)$}
	\label{4holes}
\end{figure}

Choosing the constant path from $x$ to $x$, we record the homomorphism induced by inclusion $\iota_{*}:\pi_{1}(\Sigma,x)\into\pi_{1}(M,x)$ in the chosen generators:

\begin{itemize}
\item $\iota_{*}(\delta_{1})=A^{-2}B$
\item $\iota_{*}(\delta_{2})=B^{-2}A^{-1}$
\item $\iota_{*}(\delta_{3})=AB^{-1}A=A\;\iota_{*}(\delta_{1})^{-1}\;A^{-1}$
\item $\iota_{*}(\delta_{4})=A^{-1}B^{2}A^{2}=A^{-2}\;\iota_{*}(\delta_{2})^{-1}\;A^{2}$
\end{itemize}

We suppress the basepoint $x$ and the notation $\iota_{*}$ in what follows, and view $\pi_{1}(\Sigma)$ as a subgroup of $\pi_{1}(M)$. In everything that follows, we fix the notation $M=(H_{2},P)$ and $\Sigma$ for the 4-holed sphere boundary. We now describe a geometrically finite hyperbolic structure on $M$. Details about similar structures (gluings of regular right-angled ideal polyhedra) can be found in \cite{chesebro-deblois}.

For $v,w\in\left(\CP^{1}\right)^{3}$, each with pairwise distinct entries, let $m(v,w)$ be the unique M\"{o}bius transformation taking the triple $v$ to the triple $w$. Fix an identification $\partial_{\infty}\H^{3}\cong\CP^{1}$, and let $\O$ be the ideal octahedron in $\H^{3}$ with totally geodesic triangular faces whose vertices are $\{1,0,-1,i,-i,\infty\}$. 

Consider the following representation of $\pi_{1}H_{2}$:
\begin{align*}
\tilde{\rho}_{1}:\langle A,B\rangle \;&\to\hspace{.2cm}\SL_{2}\C\\
A\;\;\;\;&\mapsto \;\;m((-1,i,0),(0,-i,1))\;\;\;\;\hspace{.05cm}=\;\frac{1+i}{2}\pmat{1}{1}{1+2i}{1}\\
B\;\;\;\;&\mapsto \;\;m((-1,-i,\infty),(1,\infty,i))\;=\;-\frac{1+i}{2}\pmat{i}{-1+2i}{1}{i}
\end{align*}
Let $\rho_{1}:\langle A,B\rangle\to\PSL_{2}\C$ be the representation induced by $\tilde{\rho}_{1}$.

\begin{figure}[]
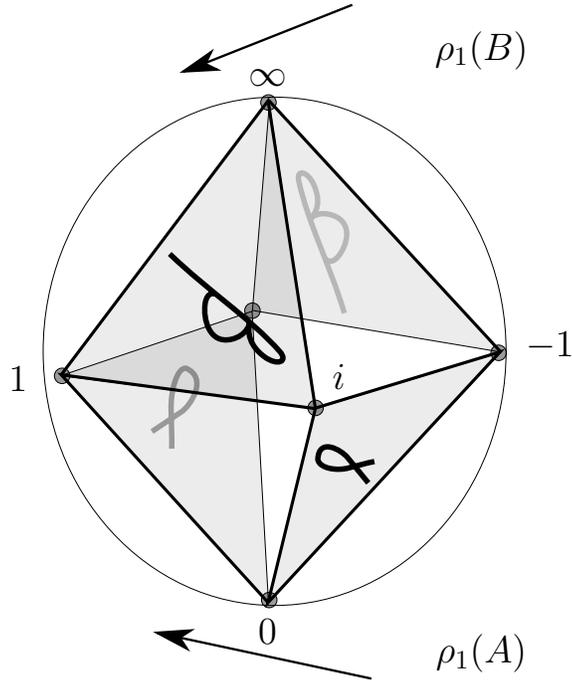

	\centering
		\begin{lpic}[clean]{octahedronsend(,9.cm)}
			\Large
			\lbl[]{160,230;$\rho_{1}(B)$}
			\lbl[]{82,220;$\infty$}
			\lbl[]{-9,110;$1$}
			\lbl[]{108,111;$i$}
			\lbl[]{185,122;$-1$}
			\lbl[]{82,18;$0$}
			\lbl[]{160,10;$\rho_{1}(A)$}
		\end{lpic}
		\caption{The octahedron $\O$ with side identifications labelled by $\alpha$ and $\beta$.}
		\label{octpic}
\end{figure} 
Geometrically, $\rho_{1}(A)$ and $\rho_{1}(B)$ perform face identifications on $\O$ with hyperbolic isometries (see Figure \ref{octpic}), and so $\rho_{1}$ is faithful by a standard Ping-Pong Lemma argument. Let $\hat{\Gamma}_{1}=\rho_{1}\langle A,B\rangle$. By the Poincar\'{e} Polyhedron Theorem \cite[p.~75]{maskit}, $\hat{\Gamma}$ is discrete, and $\nicefrac{\H^{3}}{\hat{\Gamma}}$ is a hyperbolic structure on the genus-2 handlebody. Since all of the dihedral angles of $\O$ are $\frac{\pi}{2}$, the non-paired faces meet flush. Thus the quotient $\nicefrac{\O}{\hat{\Gamma}_{1}}$ is a hyperbolic structure on the genus-2 handlebody, with totally geodesic boundary, homotopy equivalent to $\nicefrac{\H^{3}}{\hat{\Gamma}_{1}}$. Horoball neighborhoods of the six ideal vertices glue to two rank-1 cusp neighborhoods in the complete hyperbolic manifold $\nicefrac{\H^{3}}{\hat{\Gamma}_{1}}$. 

Denote the convex core of $\hat{\Gamma}_{1}$ by $\mathcal{C}$. Since $\O$ is the convex hull of points that are in the limit set, $\nicefrac{\O}{\hat{\Gamma}_{1}}\subset\mathcal{C}$. Since $\mathcal{C}$ is minimal among convex subsets of $\nicefrac{\H}{\hat{\Gamma}_{1}}$ for which inclusion is a homotopy equivalence, we see that $\mathcal{C}=\nicefrac{\O}{\hat{\Gamma}_{1}}$. Since $\O$ has finite volume and nonempty interior, $\hat{\Gamma}_{1}$ is geometrically finite.

The $\rho_{1}$-images of the core curves of $P$ are distinct maximal parabolic conjugacy classes in $\hat{\Gamma}_{1}$. Any parabolics in $\hat{\Gamma}_{1}$ must be conjugate to parabolics stabilizing the equivalence class of a vertex of $\O$ \cite[p.~131]{maskit}. Since $P$ consists of two annuli, and since the vertices split into two equivalence classes, $\rho_{1}$ determines a one-to-one correspondence between components of $P$ and conjugacy classes of maximal parabolic subgroups of $\hat{\Gamma}_{1}$. This means we are in the setting of \cite[Lem.~2.6.]{chesebro-deblois} and $\rho_{1}$ is induced by a homeomorphism from the convex core of $\hat{\Gamma}_{1}$, $\nicefrac{\O}{\hat{\Gamma}_{1}}$, to $H_{2}\setminus P$. We denote this point $[\rho_{1}]\in\GF(M)\subset\X(M)$. 

The octahedron has an evident self-map which respects the gluings: Reflect through an equatorial plane and rotate by $90^{\circ}$ around the axis perpindicular to the plane. In the coordinates chosen in Figure \ref{octpic}, this is the anti-M\"{o}bius map $z\mapsto i/\bar{z}$. This self-map of $\O$ thus descends to an orientation-reversing isometry of the hyperbolic 3-manifold $\nicefrac{\H^{3}}{\hat{\Gamma}_{1}}$ (cf. \S\ref{param def path}).

Before analyzing this symmetry of $(M,\Sigma)$ (cf. \S\ref{4p}), the octahedron $\O$ provides a natural way to deform the representation $\rho_{1}$.

\section{Deforming the example}
\label{param def path}

Consider now the path of representations:
\begin{align}
\tilde{\rho}_{t}:\langle A,B\rangle \;&\to\hspace{.2cm}\SL_{2}\C\notag\\
\label{rhodef}
A\;\;\;\;&\mapsto \;\;m((-1,it,0),(0,-it,1))\;\;\;\;\hspace{.05cm}=\;\frac{1}{t(t-i)}\pmat{t^{2}}{t^{2}}{1+2it}{t^{2}}\\
B\;\;\;\;&\mapsto \;\;m((-1,-it,\infty),(1,\infty,it))\;=\;-\frac{i}{t+i}\pmat{it}{-1+2it}{1}{it}\notag
\end{align}
Once again, let $\rho_{t}:\langle A,B\rangle\to\PSL_{2}\C$ be the representations induced by $\tilde{\rho}_{t}$. Taking $t=1$, we recover the representation $\rho_{1}$ from \S\ref{example}.

For each $t\in\R$, let $\hat{\Gamma}_{t}:=\rho_{t}\langle A,B\rangle$ and $\Gamma_{t}:=\rho_{t}(\pi_{1}\Sigma)$. 

\begin{lemma}
{The maps $\left[\rho_{\cdot}\right]:(0,\infty)\to\X(M)$ and $\left[\rho_{\cdot}|_{\pi_{1}(\Sigma)}\right]:(0,1]\to\X(\Sigma)$ are injective. For $t>0$, $[\rho_{t}|_{\pi_{1}(\Sigma)}]$ is in the real locus of $\X(\Sigma)$ if and only if $t=1$. }
\label{param check}
\end{lemma}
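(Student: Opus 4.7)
The lemma breaks into three claims, each of which is a statement about explicit trace functions of $t$ on $(0, \infty)$. The plan is to use the fact that characters in $\X(F_{2})$ (by Fricke) and in $\X(\Sigma)$ are separated by finitely many squared-trace functions, reduce each claim to an assertion about the real and imaginary parts of specific trace expressions in $t$, and verify those assertions by direct calculation from (\ref{rhodef}).

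For the first claim, I would compute directly from (\ref{rhodef}) that
$$\mathrm{tr}\,\tilde\rho_{t}(A) \;=\; \frac{2t}{t-i}, \qquad \mathrm{tr}\,\tilde\rho_{t}(B) \;=\; \frac{2t}{t+i},$$
so $\mathrm{tr}^{2}\rho_{t}(A) = 4t^{2}/(t-i)^{2}$. An equality $\mathrm{tr}^{2}\rho_{s}(A) = \mathrm{tr}^{2}\rho_{t}(A)$ for $s,t>0$ gives, after extracting a square root of the ratio, $t(s-i) = \pm s(t-i)$. The $+$ sign forces $s = t$; the $-$ sign requires $2st = i(s+t)$, impossible for $s, t > 0$ since the left-hand side is real while the right-hand side has nonzero imaginary part. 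Hence $[\rho_{\cdot}]\colon(0,\infty)\to\X(M)$ is injective, establishing the first claim.

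For the remaining two claims, the peripheral elements $\rho_{t}(\delta_{i})$ are parabolic for every $t$, so $\mathrm{tr}^{2}\rho_{t}(\delta_{i}) = 4$ gives no information about $t$. I would instead compute $\mathrm{tr}\,\rho_{t}(w)$ for a convenient non-peripheral element $w \in \pi_{1}\Sigma$, for instance $w = \delta_{1}\delta_{2} = (ABA^{2})^{-1}$, by expanding via the standard $\SL_{2}$ trace identities and the already-computed traces of $A$, $B$, and $AB$. The result is an explicit rational function of $t$; showing it is injective on $(0, 1]$ yields the second claim, and showing its imaginary part vanishes on $(0,\infty)$ precisely at $t = 1$ yields the third. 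The third claim is consistent with the geometric picture: at $t = 1$ the regular ideal octahedron $\O$ has all dihedral angles equal to $\pi/2$, so $\nicefrac{\H^{3}}{\hat\Gamma_{1}}$ has totally geodesic boundary and $\rho_{1}|_{\pi_{1}(\Sigma)}$ is Fuchsian. The main obstacle is the bookkeeping involved in computing the trace of $w$ and verifying both its injectivity on $(0,1]$ and the location of the single zero of its imaginary part; the computation is elementary but requires careful tracking of complex rational expressions.
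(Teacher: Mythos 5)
Your proposal is correct and follows essentially the same route as the paper: prove $[\rho_\cdot]$ is injective by showing $tr^{2}A_{t}$ is one-to-one on $(0,\infty)$, then handle the $\X(\Sigma)$-injectivity and the real-locus claim via a single non-peripheral trace, namely $tr\,\tilde\rho_{t}(\delta_{1}\delta_{2})$ where $\delta_{1}\delta_{2}=(ABA^{2})^{-1}$, and use the totally geodesic boundary at $t=1$ to see that $[\Gamma_{1}]$ actually lies in the real locus. Your algebraic treatment of $tr^{2}A_{t}$ (clearing denominators and ruling out the $-$ branch) is a small variation on the paper's geometric argument that $\tfrac{2t}{t-i}-1=\tfrac{t+i}{t-i}$ parametrizes a hemisphere, but the content is the same; and the step you flag as bookkeeping is precisely what the paper carries out, computing
$f(t)=tr\,\tilde\rho_{t}(\delta_{1}\delta_{2})$ explicitly, observing $\Im f<0$ on $(0,1)$ so that $f^{2}$ is one-to-one iff $f$ is, and then showing $\tfrac{d}{dt}\Im f(t)>0$ on $(0,1)$ by a direct polynomial estimate. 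So the only thing missing from your write-up is that final explicit verification, which you correctly identified as the crux.
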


\begin{proof}{Let $A_{t}:=\rho_{t}(A)$ and $B_{t}:=\rho_{t}(B)$. Noting that the square of the trace is a well-defined function on $\PSL_{2}\C$, for $i\in\{1,2,3,4\}$ we have
\[tr^{2}\rho_{t}(\delta_{i})=4.\]
Thus $\left[\rho_{t}\right]\in\X(M)\subset\X(H_{0})$. 

In order to show injectivity of the paths $[\rho_{\cdot}]$ and $[\rho_{\cdot}|_{\pi_{1}\Sigma}]$ on $(0,\infty)$ and $(0,1)$, respectively, we show that $tr^{2}A_{t}$ and $tr^{2}\rho_{t}(\delta_{1}\delta_{2})$ are one-to-one functions of $t$ on the respective intervals.

We compute: 
$$tr^{2}A_{t}=\left(\frac{2t}{t-i}\right)^{2}$$
Note that $\frac{2t}{t-i}-1=\frac{t+i}{t-i}$. For $t>0$, the quantity $\frac{t+i}{t-i}$ is evidently a parameterization of the upper hemisphere of the unit circle centered at 0. Thus $\{tr^{2}A_{t}:t>0\}$ is a parameterization of the square of the upper hemisphere of the unit circle centered at 1. In particular, it is one-to-one on $(0,\infty)$.

For brevity, let $f(t):=tr\tilde{\rho}_{t}(\delta_{1}\delta_{2})$. A computation shows
\[f(t)=\frac{2t^{2}(t^{4}-22t^{2}-7)}{(1+t^{2})^{3}}+
i\frac{(t^{2}-1)(5t^{2}+1)^{2}}{t(1+t^{2})^{3}}.\]

For $t\in(0,1)$, the quantity $\Im f(t)<0$. Thus $f(t)^{2}$ is one-to-one on $(0,1)$ if and only if $f(t)$ is one-to-one on $(0,1)$. We compute
\[\frac{d}{dt}\left(\Im f(t)\right)=\frac{-(1+5t^{2})(5t^{6}-35t^{4}+7t^{2}-1)}{t^{2}(1+t^{2})^{4}}.\]
We now estimate, for $t\in(0,1)$,
\begin{align*}
5t^{6}-35t^{4}+7t^{2}-1&=&5&(t^{2}-1)^{3}\;\;-&20&(t^{2}-1)^{2}\;\;-&48&(t^{2}-1)\;\;-&64&\\
&<&0&\hspace{1cm}+&0&\hspace{1cm}+&48&\hspace{1cm}-&64&\\
&<&0
\end{align*}
Thus $\frac{d}{dt}\Im f(t)>0$, and $\Im f(t)$ is monotone increasing for $t\in(0,1)$. In particular, the function $tr^{2}\rho_{\cdot}(\delta_{1}\delta_{2}):(0,1)\to\C$ is one-to-one.

Finally, since the Kleinian group $\hat{\Gamma}_{1}$ has totally geodesic boundary, the boundary subgroup $[\Gamma_{1}]$ is in the real locus of $\X(\Sigma)$. As well, it is clear that $Im\;f(t)=0$, for $t>0$, if and only $t=1$. Thus $[\hat{\Gamma}_{t}]$ is in the real locus of $\X(\Sigma)$ if and only if $t=1$.}
\end{proof}

The `symmetry' of $\hat{\Gamma}_{1}$ (cf. \S\ref{example}) persists along the path $t\mapsto[\rho_{t}]$. In Lemma \ref{sym qf} we check that conjugation by the anti-M\"{o}bius map $\Psi_{t}(z)=it/\bar{z}$ descends to an isometry of $\nicefrac{\H^{3}}{\hat{\Gamma}_{t}}$. This isometry will be instrumental in our analysis of the path $[\rho_{t}]$ (cf. \S6 and \S7). 

\begin{lemma}{For all $t$, we have $\Psi_{t}\hat{\Gamma}_{t}\Psi_{t}^{-1}=\hat{\Gamma}_{t}$, and $\left[\Psi_{t}\Gamma_{t}\Psi_{t}^{-1}\right]=\left[\Gamma_{t}\right]$. Thus $\Psi_{t}$ induces a mapping class $\Psi\in MCG^{*}(M)$, with $\left[\hat{\Gamma}_{t}\right]\in\mathrm{Fix}\;\Psi$ and $[\Gamma_{t}]\in\mathrm{Fix}\;\Psi|_{\Sigma}$. The action of $\Psi|_{\Sigma}$ on the punctures is an order four permutation, and $\Psi|_{\Sigma}$ preserves the two homotopy classes of simple closed curves $\delta_{1}\delta_{3}$ and $\delta_{2}\delta_{4}$.}
\label{sym qf}
\end{lemma}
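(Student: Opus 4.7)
The plan is to establish the identities $\Psi_t A_t \Psi_t^{-1} = B_t$ and $\Psi_t B_t \Psi_t^{-1} = A_t^{-1}$ by a direct three-point computation, combine them with the explicit geometric symmetry of $\O$ to obtain the mapping class $\Psi \in MCG^*(M)$, and extract the puncture permutation and curve-preservation statements from these ingredients.

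For the first identities, since $\Psi_t$ is anti-M\"obius, conjugation by $\Psi_t$ sends a M\"obius transformation to a M\"obius transformation, which is uniquely determined by its action on any three points. Computing $\Psi_t(\pm 1) = \pm it$, $\Psi_t(\pm it) = \mp 1$, $\Psi_t(0) = \infty$, and $\Psi_t(\infty) = 0$, one finds that $\Psi_t A_t \Psi_t^{-1}$ sends the triple $(-it, -1, \infty)$ to $(\infty, 1, it)$, which is precisely the action of $B_t$ on the same triple; an analogous three-point check yields $\Psi_t B_t \Psi_t^{-1} = A_t^{-1}$. Hence $\Psi_t \hat\Gamma_t \Psi_t^{-1} = \hat\Gamma_t$, and conjugation by $\Psi_t$ realizes the order-four automorphism $\phi$ of $\pi_1 H_2$ with $A \mapsto B$ and $B \mapsto A^{-1}$, independently of $t$. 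To promote $\phi$ to a mapping class I would invoke the polyhedral picture: the map $\Psi_1(z) = i/\bar z$ is a genuine self-isometry of $\O$, cyclically permuting the equatorial vertices $1 \to i \to -1 \to -i$ and swapping the poles $0 \leftrightarrow \infty$, which sends the pair of $A$-glued faces to the pair of $B$-glued faces and exchanges the two equivalence classes $\{-1,0,1\}$ and $\{i,-i,\infty\}$ of ideal vertices encoding the rank-1 cusps of $M$. Thus $\Psi_1$ descends to an orientation-reversing self-isometry of $\nicefrac{\H^3}{\hat\Gamma_1}$ preserving $\Sigma = \partial M$ and swapping the annuli of $P$, producing the required $\Psi \in MCG^*(M)$. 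In particular $\phi$ preserves the $\pi_1 H_2$-conjugacy class of $\pi_1 \Sigma$, yielding $[\Psi_t \Gamma_t \Psi_t^{-1}] = [\Gamma_t]$, $[\hat\Gamma_t] \in \mathrm{Fix}\,\Psi$, and $[\Gamma_t] \in \mathrm{Fix}\,\Psi|_\Sigma$.

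For the puncture permutation I would compute $\phi(\delta_1) = \phi(A^{-2}B) = B^{-2}A^{-1} = \delta_2$ directly, and $\phi(\delta_2) = \phi(B^{-2}A^{-1}) = A^2 B^{-1} = A\,\delta_3\,A^{-1}$, so $\Psi|_\Sigma$ sends the peripheral conjugacy class $[\delta_1]$ to $[\delta_2]$ and $[\delta_2]$ to the $\pi_1 H_2$-conjugacy class of $\delta_3$. Since $\Psi$ exchanges the two cusps of $M$, the induced permutation $\sigma$ of the four punctures swaps the pairs $\{1,3\}$ and $\{2,4\}$, and the geometric action of $\Psi_1$ on the four ideal-vertex equivalence classes of the convex core boundary (each a collection of three vertex-face incidences arising from the identifications under $A$ and $B$) is readily verified to be a $4$-cycle, forcing $\sigma = (1\,2\,3\,4)$. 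Essential simple closed curves on a four-holed sphere are classified by their pair-partition of the four punctures, and $(1\,2\,3\,4)$ preserves precisely the partition $\{\{1,3\},\{2,4\}\}$ while swapping the other two; hence the homotopy classes of $\delta_1\delta_3$ and $\delta_2\delta_4$ are each preserved by $\Psi|_\Sigma$. The main subtlety is the promotion of the algebraic automorphism $\phi$ to a bona fide element of $MCG^*(M)$, which leans on the explicit polyhedral description of $\O$ rather than purely algebraic considerations.
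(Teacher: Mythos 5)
The first part of your argument is sound and genuinely different in method from the paper's: you verify $\Psi_t A_t \Psi_t^{-1} = B_t$ and $\Psi_t B_t \Psi_t^{-1} = A_t^{-1}$ by a three-point check on the sphere at infinity, where the paper does a direct $2\times 2$ matrix computation. That part works.

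The second part has a genuine gap. You claim that ``essential simple closed curves on a four-holed sphere are classified by their pair-partition of the four punctures.'' This is false: the set of isotopy classes of non-peripheral simple closed curves on $\Sigma_{0,4}$ is infinite (it is parametrized by $\QP^{1}$, and the paper itself uses this identification near Lemma \ref{fix lam}), whereas there are only three pair-partitions of four punctures. In particular, a mapping class that induces the puncture permutation $(1\,2\,3\,4)$ may still move the curve $\delta_1\delta_3$ -- for instance, composing with a power of a Dehn twist about $\delta_1\delta_3$ changes nothing about the puncture permutation but changes $\eta$, and composing with a Dehn twist about a third slope changes both $\xi$ and $\eta$. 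Knowing only the puncture permutation does not pin down the outer automorphism of $\pi_1\Sigma$, so your ``hence the homotopy classes of $\delta_1\delta_3$ and $\delta_2\delta_4$ are each preserved'' does not follow.

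There is also a related subtlety you elide: your identities $\phi(\delta_1)=\delta_2$ and $\phi(\delta_2)=A\delta_3 A^{-1}$ are equalities and conjugacies in $\pi_1 H_2$, but to read off the action of $\Psi|_\Sigma$ on punctures you need the induced outer automorphism of $\pi_1\Sigma$, which requires a \emph{single} conjugating element that simultaneously returns all four $\phi(\delta_i)$ to $\pi_1\Sigma$. ($\pi_1 H_2$-conjugacy is coarser: $\delta_1$ and $\delta_3$ are conjugate in $\pi_1 H_2$ yet represent different punctures of $\Sigma$.) The paper resolves both issues at once by computing $\Psi_t\rho_t(\delta_i)\Psi_t^{-1}= A_t^2\cdot\rho_t(w_i)\cdot A_t^{-2}$ for explicit words $w_i\in\pi_1\Sigma$ ($w_1=\delta_4^{-1}$, $w_2=\delta_1^{-1}$, $w_3=\delta_1\delta_3\delta_4$, $w_4=\delta_1\delta_2\delta_4$). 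This exhibits the automorphism of $\pi_1\Sigma$ up to inner automorphism, from which both the $4$-cycle on punctures and the preservation of $\delta_1\delta_3$ and $\delta_2\delta_4$ (check directly: $\delta_4^{-1}(\delta_1\delta_3\delta_4)\sim\delta_1\delta_3$ and $\delta_1^{-1}(\delta_1\delta_2\delta_4)=\delta_2\delta_4$) follow immediately. To repair your argument you would need to carry out an analogous computation rather than appeal to the ``readily verified'' geometric picture of the octahedron and the false classification of curves by pair-partition.
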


\begin{proof}{A calculation using the defintion of $\rho_{t}$ (equation (\ref{rhodef})) shows:\\
\begin{itemize}

\item $\Psi_{t}A_{t}\Psi_{t}^{-1}=B_{t}$
\item $\Psi_{t}B_{t}\Psi_{t}^{-1}=A_{t}^{-1}$

\end{itemize}\vspace{.5cm}
Thus $\Psi_{t}\hat{\Gamma}_{t}\Psi_{t}^{-1}=\hat{\Gamma}_{t}$, and $\Psi_{t}$ induces a mapping class $\Psi\in MCG^{*}(M)$ with $\left[\hat{\Gamma}_{t}\right]\in\mathrm{Fix}\;\Psi$. We will drop the restriction map notation and consider $\Psi$ as simultaneously an element of $MCG^{*}(M)$ and $MCG^{*}(\Sigma)$. We compute:

\vspace{.2cm}
\begin{itemize}

\item $\Psi_{t}\rho_{t}(\delta_{1})\Psi_{t}^{-1}=A_{t}^{2}\cdot\rho_{t}\left(\delta_{4}^{-1}\right)\cdot A_{t}^{-2}$

\item $\Psi_{t}\rho_{t}(\delta_{2})\Psi_{t}^{-1}=A_{t}^{2}\cdot\rho_{t}\left(\delta_{1}^{-1}\right)\cdot A_{t}^{-2}$

\item $\Psi_{t}\rho_{t}(\delta_{3})\Psi_{t}^{-1}=A_{t}^{2}\cdot\rho_{t}\left(\delta_{1}\delta_{3}\delta_{4}\right)\cdot A_{t}^{-2}$

\item $\Psi_{t}\rho_{t}(\delta_{4})\Psi_{t}^{-1}=A_{t}^{2}\cdot\rho_{t}\left(\delta_{1}\delta_{2}\delta_{4}\right)\cdot A_{t}^{-2}$

\end{itemize}\vspace{.2cm}
Thus $\Psi_{t}\Gamma_{t}\Psi_{t}^{-1}$ is conjugate to $\Gamma_{t}$, and $\Psi$ is an orientation-reversing mapping class of $\Sigma$ with $[\Gamma_{t}]\in\mathrm{Fix}\;\Psi$. 

Because $\delta_{1}\delta_{3}\delta_{4}=\delta_{1}\delta_{2}^{-1}\delta_{1}^{-1}\sim\delta_{2}^{-1}$, (and similarly $\delta_{1}\delta_{2}\delta_{4}\sim\delta_{3}^{-1}$), we see that the action of $\Psi$ on the conjugacy classes of punctures is a cyclic permutation. The fact that $\Psi$ preserves the geodesic representatives of the simple closed curves $\delta_{1}\delta_{3}$ and $\delta_{2}\delta_{4}$ is now immediate.}
\end{proof}

Note that, by Proposition \ref{prop fix}, the fixed set of the mapping class $\Psi$ is perserved by the skinning map. Namely:
\begin{align}
\label{prop fix ex}
\sigma_{M}\left(\mathrm{Fix}\;\Psi|_{\Sigma}\right)\subset\mathrm{Fix}\;\Psi|_{\Sigma}
\end{align}

Since $\GF(M)\subset\X(M)$ is open (see \cite[Thm.~10.1.]{marden}), there is some open interval about $1$ so that $\left[\rho_{t}\right]\in\GF(M)$ and, by Lemma \ref{restrqF}, $\left[\rho_{t}|_{\pi_{1}(\Sigma)}\right]\in\QF(\Sigma)$. Denote the maximal such open interval around $1$ by $U$. 

The path of quasi-Fuchsian groups $t\mapsto\Gamma_{t}$, for $t\in U$, induces two paths in $\T(\Sigma)$ corresponding to the top and bottom of $\Gamma_{t}$. By definition, $\sigma_{M}$ sends the top path to the bottom path. For $t\in U$, fix notation $[\Gamma_{t}]=Q(X_{t},Z_{t})$, so that $\sigma_{M}(X_{t})=Z_{t}$ and $X_{t},Z_{t}\in\mathrm{Fix}\;\Psi|_{\Sigma}$. Lemma \ref{param check} implies, in particular, that $\left\{X_{t}\;|\;t\in U\right\}$ is an injective path in $\T(\Sigma)$. In fact, the remainder of the paper is devoted to checking that $\left\{Z_{t}\;|\;t\in U\right\}$ is a non-injective path whose image is confined to the real one-dimensional submanifold $\mathrm{Fix}\;\Psi|_{\Sigma}\subset\T(\Sigma)$. Our strategy will be to examine the convex core boundary surfaces and their bending laminations. First, we need to better understand the set $\mathrm{Fix}\;\Psi|_{\Sigma}$. This is the subject of \S\ref{4p}.

\section{Rhombic 4-Punctured Spheres}
\label{4p}
In this section, we examine the symmetries of 4-punctured spheres, and collect some useful facts about a special symmetrical set in $\T(\Sigma)$. Let $Aut^{*}(X)$ denote the group of conformal or anti-conformal automorphisms of $X$. Recall that $\T(\Sigma)\cong\H$: The moduli space of complex structures on 4-punctured spheres, with ordered punctures, is $\C\setminus\{0,1\}$, and the universal cover of this complex manifold is $\H$.

It is well-known that for all $X\in\T(\Sigma)$ there exists a Klein 4-group of conformal automorphisms that acts trivially on $\mathcal{S}$, whose non-trivial elements are involutions that exchange punctures in pairs. Lemma \ref{char rhomb} characterizes a more restrictive orientation-reversing symmetry of a subset of $\T(\Sigma)$.  

\begin{lemma}{Let $X\in\T(\Sigma)$, $\xi,\eta\in\mathcal{S}$. The following are equivalent:
\begin{enumerate}

\item{$X$ may be written as the complement of the vertices of a Euclidean rhombus in $\CP^{1}$, with $\xi$ and $\eta$ as pictured (see Figure \ref{rhombpic1}).}

\item{In Fenchel-Nielsen coordinates, $\T(\Sigma)\cong\{(\ell,\theta)\;|\;\ell\in\R^{+},\;\theta\in\R\}$, the $\{\xi,\eta\}$-rhombic set is given by $\Rh_{\{\xi,\eta\}}\cong\{\theta=\pi/2\}$. The pants decomposition should be chosen with waistcurve $\xi$ and a certain choice of transverse curve $\eta^{*}$ corresponding to the pair $\{\xi,\eta\}$ (see Figure \ref{rhombpic2}).}

\item{There exists an orientation-reversing $\psi\in MCG^{*}(\Sigma)$ that acts as an order four cyclic permutation of the punctures, preserves $\xi$ and $\eta$, and $X\in\mathrm{Fix}\;\psi$.}

\end{enumerate}}
\label{char rhomb}
\end{lemma}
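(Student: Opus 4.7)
The plan is to prove $(1) \Rightarrow (3)$ by producing an explicit anti-M\"obius symmetry of the rhombus, $(3) \Rightarrow (1)$ by normalizing an abstract order-four anti-M\"obius map into rhombic form, and $(1) \Leftrightarrow (2)$ by matching fixed sets in Fenchel--Nielsen coordinates. All three conditions cut out real one-dimensional subsets of $\T(\Sigma) \cong \H$, so the equivalence is ultimately a matter of matching these $1$-submanifolds. For $(1) \Rightarrow (3)$, after placing the rhombus vertices at $\{\pm r, \pm si\}$ with $r, s > 0$ (by a M\"obius normalization of $\CP^{1}$, which does not affect the conformal structure of $X$), a direct calculation shows that the anti-M\"obius map $\tilde\psi(z) = irs/\bar z$ cyclically permutes the vertices as $r \mapsto si \mapsto -r \mapsto -si \mapsto r$. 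The induced mapping class $\psi \in MCG^{*}(\Sigma)$ is then orientation-reversing of order four, has $X \in \mathrm{Fix}\,\psi$, and preserves the pairing $\{r, -r\}\,|\,\{si, -si\}$ of vertices---hence preserves the common isotopy class of $\xi$ and $\eta$ drawn in Figure~\ref{rhombpic1}.

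For $(3) \Rightarrow (1)$, I would use that any (anti-)conformal automorphism of a punctured sphere extends across the punctures to an (anti-)M\"obius map of $\CP^{1}$, so $\psi$ lifts to an anti-M\"obius $\tilde\psi$ of $\CP^{1}$ cyclically permuting the four punctures. Its square is an orientation-preserving M\"obius involution with two fixed points; after conjugating $\CP^{1}$ by a M\"obius transformation (equivalent to rechoosing the uniformization of $X$), I may assume $\tilde\psi^{2}(z) = -z$, with fixed points $0$ and $\infty$. Since $\tilde\psi$ permutes $\{0, \infty\}$ and cannot fix both (the ansatz $\tilde\psi(z) = \alpha \bar z$ yields $|\alpha|^{2} = -1$, impossible), $\tilde\psi$ must swap $0$ and $\infty$ and has the form $\tilde\psi(z) = c/\bar z$. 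The identity $\tilde\psi^{2}(z) = (c/\bar c)z = -z$ pins $c \in i\R$, and after a further real dilation I may write $\tilde\psi(z) = it^{2}/\bar z$ for some $t > 0$. Any $\tilde\psi$-orbit is then $\{w, it^{2}/\bar w, -w, -it^{2}/\bar w\}$, consisting of four points on two perpendicular lines through the origin at equal distances in pairs---precisely the vertex set of a Euclidean rhombus, which after a final rotation aligns with the coordinate axes as in $(1)$.

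For $(1) \Leftrightarrow (2)$, by the above it suffices to identify $\mathrm{Fix}\,\psi$ with $\{\theta = \pi/2\}$ in Fenchel--Nielsen coordinates $(\ell, \theta)$ with waistcurve $\xi$ and transverse curve $\eta^{*}$. Since $\psi$ preserves $\xi$ setwise and reverses orientation, the conjugation relation $\psi D_{\xi}\psi^{-1} = D_{\xi}^{-1}$ on the Dehn twist $D_{\xi}$ forces the action of $\psi$ on FN coordinates to have the form $(\ell, \theta) \mapsto (\ell, c(\ell) - \theta)$ for some real function $c$, so that $\mathrm{Fix}\,\psi$ is a graph $\theta = c(\ell)/2$. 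The specification of $\eta^{*}$ in Figure~\ref{rhombpic2} calibrates $\theta$ precisely so that $c(\ell) \equiv \pi$, giving the desired identification. The principal obstacle is this last calibration: matching the rhombic conformal structure in $\CP^{1}$ to the hyperbolic FN normalization anchored at $\eta^{*}$ is somewhat fiddly, though it can be pinned down by a concrete hyperbolic computation at one convenient point, such as the square case $r = s$, from which constancy in $\ell$ follows by the rigidity argument above.
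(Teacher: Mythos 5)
Your implications $(1)\Rightarrow(3)$ and $(3)\Rightarrow(1)$ are correct and take a genuinely different route from the paper. The paper proves the cycle $(1)\Rightarrow(2)\Rightarrow(3)\Rightarrow(1)$; for $(3)\Rightarrow(1)$ it composes $\psi$ with a Klein four-group element to get an anti-conformal \emph{involution} $\phi$ transposing a single pair of punctures, and then uses the fact that the fixed set of an anti-M\"obius involution is a circle to normalize the picture. Your argument instead confronts the order-four anti-M\"obius lift $\tilde\psi$ directly: normalize $\tilde\psi^{2}(z)=-z$, rule out the form $\alpha\bar z$, deduce $\tilde\psi(z)=c/\bar z$ with $c\in i\R$, and read off the rhombus from an orbit. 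This avoids the Klein four-group trick and is cleaner in that respect (you should still say a word about why the resulting pairing of punctures by the diagonals is the one prescribed for $\xi$ and $\eta$ in Figure~\ref{rhombpic1}). Your $(1)\Rightarrow(3)$ is a short direct check rather than passing through $(2)$; that is fine.

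The $(1)\Leftrightarrow(2)$ step, however, has a real gap. You assert that $\psi D_{\xi}\psi^{-1}=D_{\xi}^{-1}$ ``forces the action of $\psi$ on FN coordinates to have the form $(\ell,\theta)\mapsto(\ell,c(\ell)-\theta)$.'' The conjugation relation only gives $\psi_{*}(\ell,\theta)=(\ell,g(\ell,\theta))$ with $g(\ell,\theta+2\pi)=g(\ell,\theta)-2\pi$, i.e.\ $g(\ell,\theta)+\theta$ is $2\pi$-periodic in $\theta$; that alone does not make $g$ affine in $\theta$. (The affine form is in fact true, but needs another input --- for instance that $\psi$ is anti-symplectic for Wolpert's form $\tfrac{1}{2}\,d\ell\wedge d\theta$, which pins $\partial g/\partial\theta=-1$.) More seriously, even granting the form, your calibration plan --- compute $c$ at the square $r=s$ and conclude ``constancy in $\ell$ follows by the rigidity argument above'' --- does not follow: the form $(\ell,\theta)\mapsto(\ell,c(\ell)-\theta)$ explicitly allows $c$ to depend on $\ell$, so a single-point computation does not determine the whole curve $\{\theta=c(\ell)/2\}$. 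The paper avoids this entirely: in the rhombic coordinate it exhibits the segments of the real and imaginary axes as orthogeodesics, defines $\eta^{*}$ by joining their compact pieces ``turning right'' at $\xi$, and observes that the four arcs into which these axes cut $\xi$ are cyclically permuted by $z\mapsto is/\bar z$ and hence have equal length, giving twist exactly $\pi/2$ \emph{for every} $s$. You would need either to carry out an analogous computation uniformly in $\ell$, or to supply a separate argument that $c$ is constant (e.g.\ by showing $\psi$ fixes the transversal $\eta^{*}$ up to homotopy, not merely $\xi$ and $\eta$).
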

When $\xi$ and $\eta$ are clear from the context, we refer to the coordinate pictured in Figure \ref{rhombpic1} as the `rhombic coordinate'.
\begin{figure}[h]
	\begin{minipage}[]{.4\textwidth}
	\centering
	\begin{lpic}[clean]{4puncsend(6cm)}
				\small
				\lbl[]{67,119;$is$}
				\lbl[]{84,36;$-is$}
				\lbl[]{137,85;$1$}
				\lbl[]{13,69;$-1$}
		\Large
		\lbl[bl]{92,137;$\xi$}
		\lbl[]{135,40;$\eta$}	
	\end{lpic}
		\caption{The `rhombic coordinate' on a $\{\xi,\eta\}$-rhombic 4-punctured sphere}
		\label{rhombpic1}
	\end{minipage}
	\hspace{1cm}
	\begin{minipage}[]{.4\textwidth}
		\centering
	\begin{lpic}[clean]{4punceta-send(6cm)}
				\Large
				\lbl[]{100,10;$\eta^{*}$}	
	\end{lpic}
		\caption{The transversal $\eta^{*}$}
		\label{rhombpic2}
	\end{minipage}
\end{figure}

\begin{proof}{$(1)\Rightarrow(2)$: Choose waistcurve $\xi$, so that $X$ consists of two twice-punctured disks pasted together along $\xi$. Consider the following two geodesics, given in the rhombic coordinate:
\begin{itemize}
\item $\left\{\frac{is}{1-2t}:t\in[0,1]\right\}$, between $is$ and $-is$, through $\infty$
\item $\{1-2t:t\in[0,1]\}$, between $1$ and $-1$, through 0 
\end{itemize}
Note that these are geodesic by the presence of an isometry fixing each pointwise. We choose transversal $\eta^{*}$ as follows: The curve $\xi$ cuts each of these geodesics into one compact and two non-compact pieces. Join each of the compact pieces by turning right at each intersection with $\xi$.

In order to measure the twisting coordinate relative to the choice of transversal $\eta^{*}$, we observe that the four points of intersection of $\xi$ with the real and imaginary axes divide $\xi$ into four arcs. These arcs are cyclically permuted by the isometry $z\mapsto\frac{is}{\bar{z}}$, and hence are of equal length. This shows that the twisting coordinate is in $\Z\cdot\frac{\pi}{2}$, and the choice of $\eta^{*}$ guarantees the twisting coordinate is $\frac{\pi}{2}$.  

$(2)\Rightarrow(3)$: There is a reflection/twist symmetry of $X$ that preserves $\xi$ and $\eta$ and acts on punctures as desired.

$(3)\Rightarrow(1)$: By the existence of holomorphic involutions exchanging the punctures in pairs, there exists $\alpha\in Aut^{*}(X)$ such that $\phi:=\alpha\circ\psi$ is a simple transposition of the punctures that preserves $\xi$ and $\eta$. Without loss of generality, suppose $\phi$ interchanges the punctures enclosed by $\eta$. With an appropriate M\"{o}bius transformation, take these to $1$ and $-1$. Anti-conformal involutions of $\CP^{1}$ are involutions through a circle, and their fixed point sets are the circle they involve through. Since $\phi$ fixes the other two punctures, they lie on the fixed circle for $\phi$. We may thus apply another M\"{o}bius transformation, which fixes $1$ and $-1$ and takes this fixed circle to the imaginary axis. Apply an elliptic M\"{o}bius transformation fixing $1$ and $-1$ and centering the imaginary punctures about $0$, and $X$ has the desired form.}
\end{proof}

\begin{definition}{For $X\in\T(\Sigma)$, if there exists an order 4 orientation-reversing $\psi\in MCG^{*}(\Sigma)$ such that $X\in\mathrm{Fix}\;\psi$ and $\psi$ preserves the homotopy classes of simple closed curves $\xi$ and $\eta$, then X is {\it $\{\xi,\eta\}$-rhombic}.}
\end{definition}
\vspace{.1cm}
Let $\Rh_{\{\xi,\eta\}}:=\{X\in\T(\Sigma)\;|\;X\text{ is }\{\xi,\eta\}\text{-rhombic}\}$. We refer to the defining symmetry $\psi$ as {\it $\{\xi,\eta\}$-rhombic} as well. There is a natural choice of coordinate for the identification $\T(\Sigma)\cong\H$ which first identifies $\T(\Sigma_{0,4})\cong\T(\Sigma_{1,1})$. Relative to this choice, the reader may identify $\Rh_{\{\xi,\eta\}}$ as the line $\{\Re z=\frac{1}{2}\}$.

We look to the action of $MCG^{*}(\Sigma)$ on $\PML(\Sigma)$. As it turns out, the fixed points of the action of $\{\xi,\eta\}$-rhombic isometries are easy to characterize.

\begin{lemma}{For a $\{\xi,\eta\}$-rhombic mapping class $\psi\in MCG^{*}(\Sigma)$, the action of $\psi$ on $\PML(\Sigma)$ has $\mathrm{Fix}\;\psi=\{\xi,\eta\}$.}
\label{fix lam}
\end{lemma}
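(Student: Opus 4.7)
The plan is to identify the action of $\psi$ on $\T(\Sigma)\cong\H$ as a hyperbolic reflection and then transfer the count of fixed points to the Thurston boundary $\partial\H\cong\PML(\Sigma)$.

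First I would appeal to Lemma \ref{char rhomb}(3), which says that the one-dimensional submanifold $\Rh_{\{\xi,\eta\}}\subset\T(\Sigma)$ lies in $\mathrm{Fix}\;\psi$. Since $\psi$ is orientation-reversing on $\Sigma$, it acts on $\T(\Sigma)\cong\H$ by an orientation-reversing isometry. The only orientation-reversing isometry of $\H$ with a positive-dimensional fixed set is a reflection through a complete hyperbolic geodesic (the only alternative, a glide reflection, has empty fixed set in $\H$). Hence $\psi$ acts on $\H$ as the reflection whose axis is $\Rh_{\{\xi,\eta\}}$.

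Next I would invoke Thurston's compactification $\overline{\T(\Sigma)}\cong\overline{\H}$, in which $\PML(\Sigma)$ corresponds to $\partial\H\cong S^{1}$ and the $MCG^{*}(\Sigma)$-action extends continuously across the boundary to agree with its natural action on $\PML(\Sigma)$ (see \cite{flp}). The extension of a hyperbolic reflection to the closed disk is an involution whose boundary fixed set is exactly the pair of endpoints of the axis. Thus $\mathrm{Fix}\;\psi \cap \PML(\Sigma)$ has exactly two points. Since $\xi$ and $\eta$ are distinct isotopy classes of simple closed curves they descend to distinct projective classes in $\mathcal{S}\subset\PML(\Sigma)$; by hypothesis both are fixed by $\psi$, so they must be precisely those two endpoints, giving $\mathrm{Fix}\;\psi=\{\xi,\eta\}$.

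The only subtle ingredient is the continuous extension of the $MCG^{*}(\Sigma)$-action from $\T(\Sigma)$ across $\partial\H$ to match the combinatorial action on $\PML(\Sigma)$; this is a standard feature of Thurston's compactification. A self-contained alternative would be to pass to the identification $\T(\Sigma_{0,4})\cong\T(\Sigma_{1,1})$, under which $\PML(\Sigma_{0,4})\cong\RP^{1}$ and $\psi$ descends to an explicit order-$2$ element of $\mathrm{PGL}(2,\Z)$ whose two fixed points on $\RP^{1}$ can be computed directly and identified with $\xi$ and $\eta$.
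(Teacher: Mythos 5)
Your argument is correct, and it differs in emphasis from the paper's. The paper does not pass through the Thurston compactification of Teichm\"uller space at all: it instead invokes the $MCG^{*}(\Sigma)$-equivariant identification $(\PML(\Sigma),\mathcal{S})\cong(\RP^{1},\QP^{1})$ via a homomorphism $MCG^{*}(\Sigma)\to\PGL_{2}\Z$, observes that every non-identity element of $\PGL_{2}\Z$ fixes at most two points of $\RP^{1}$, and notes that an orientation-reversing mapping class cannot land in the kernel (the Klein four-group of hyperelliptic involutions). Your main route argues instead on $\T(\Sigma)\cong\H$: $\psi$ acts anti-holomorphically with a nonempty fixed set, is therefore a hyperbolic reflection, and its continuous Thurston-boundary extension fixes exactly the two endpoints of the axis, which must then be $\xi$ and $\eta$. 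That route is sound but pulls in an extra ingredient (the compatibility of the ideal boundary of $\H$ with $\PML$ and the equivariance of the boundary extension) which the paper avoids; your final ``self-contained alternative'' via $\T(\Sigma_{0,4})\cong\T(\Sigma_{1,1})$ and $\PGL_{2}\Z$ is essentially the paper's own argument, so you clearly have both in view. One small imprecision worth flagging: Lemma~\ref{char rhomb}(3) is an existence statement for each individual $X$ and does not literally assert $\Rh_{\{\xi,\eta\}}\subset\mathrm{Fix}\;\psi$ for your fixed $\psi$. You do not actually need that containment; it suffices that $\mathrm{Fix}\;\psi\neq\emptyset$ (guaranteed by the definition of a rhombic mapping class), which already forces $\psi$ to be a reflection; the equality $\mathrm{Fix}\;\psi=\Rh_{\{\xi,\eta\}}$ then follows since both are geodesics and one contains the other.
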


Because it is well-known, we do not include a detailed proof of the following fact: There is an identification $(\PML(\Sigma),\mathcal{S})\cong(\RP^{1},\QP^{1})$ which is equivariant with respect to a homomorphism $MCG^{*}(\Sigma)\to\PGL_{2}\Z$ (cf. \cite[p.~60]{farb-margalit}). Roughly speaking, for the four-punctured sphere there is a $MCG^{*}(\Sigma)$-equivariant map from rays in $\ML(\Sigma)$ to lines in $H_{1}(\Sigma,\R)$, which sends rays of laminations supported on simple closed curves to lines in $H_{1}(\Sigma,\mathbb{Q})\subset H_{1}(\Sigma,\R)$. The image of this map in a projectivized $H_{1}(\Sigma,\R)$ can be identified with slopes of lines exiting a chosen point on $\Sigma$, where the simple closed curves are identified with rational slopes. Because the action of an element of $MCG^{*}(\Sigma)$ on $\PML(\Sigma)$ is given by the action of an element of $\PGL_{2}\Z$ on $\RP^{1}$, each element of $MCG^{*}(\Sigma)$ which acts non-trivially on $\PML(\Sigma)$ has at most two fixed points. Lemma \ref{fix lam} follows.

For a curve $\gamma\in\mathcal{S}$, recall the real-valued functions $\ell_{\gamma}$ and $Ext_{\gamma}$ on $\T(\Sigma)$: For $X\in\T(\Sigma)$, the quantity $\ell_{\gamma}(X)$ is the hyperbolic length of the geodesic representative of $\gamma$ and the quantity $Ext_{\gamma}(X)$ is the extremal length of the family of curves homotopic to $\gamma$. See \cite{farb-margalit} and \cite{ahlfors} for details.

\begin{lemma}{The maps $\ell_{\xi},\;\ell_{\eta},\;Ext_{\xi},$ and $Ext_{\eta}$ each provide a diffeomorphism from $\Rh_{\{\xi,\eta\}}$ to $\R^{+}$.}
\label{param rhomb}
\end{lemma}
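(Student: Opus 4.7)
The plan is to handle the four claims in two groups. For $\ell_\xi$ and $\ell_\eta$, the diffeomorphism property is immediate from Lemma~\ref{char rhomb}(2): that lemma realizes $\Rh_{\{\xi,\eta\}}$ as the Fenchel-Nielsen slice $\{\theta = \pi/2\}$ anchored at waistcurve $\xi$, so $\ell_\xi$ is tautologically a real-analytic coordinate on $\Rh_{\{\xi,\eta\}}$ taking values in $\R^{+}$. Since the condition of being $\{\xi,\eta\}$-rhombic is manifestly symmetric in the pair, the same statement applied with the roles of $\xi$ and $\eta$ interchanged gives the analogous conclusion for $\ell_\eta$.

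For the extremal length claims, the plan is to parametrize $\Rh_{\{\xi,\eta\}}$ by the rhombic coordinate $s \in \R^{+}$ from Lemma~\ref{char rhomb}(1), so that $X_s = \CP^{1} \setminus \{\pm 1, \pm is\}$. The key geometric observation is that $X_s$ is the base of a hyperelliptic branched double cover by the elliptic curve $E_s : w^2 = (z^2 - 1)(z^2 + s^2)$, whose four Weierstrass points are the four punctures. The rhombic symmetry lifts to an order-four automorphism of $E_s$, which forces the complex modulus of $E_s$ to be purely imaginary, $\tau(s) = i h(s)$ for some $h(s) > 0$; in other words, $E_s$ is a rectangular torus, and the curves $\xi, \eta$ on $X_s$ correspond under the cover to its two standard meridians. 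On a rectangular torus of height $h$, these meridians have classical extremal lengths $1/h$ and $h$, so the extremal length transformation under the hyperelliptic cover identifies $Ext_\xi(X_s)$ and $Ext_\eta(X_s)$, up to a universal multiplicative constant, with $1/h(s)$ and $h(s)$.

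It remains to verify that $h \colon \R^{+} \to \R^{+}$ is a real-analytic diffeomorphism. Real analyticity is immediate from the algebraic dependence of $\tau$ on $s$. The reflection symmetry $s \leftrightarrow 1/s$ of the rhombic configuration interchanges $\xi$ and $\eta$, forcing the reciprocal identity $h(s) h(1/s) = 1$ and the normalization $h(1) = 1$. The coalescence of the puncture pair $\pm is$ as $s \to 0^{+}$ and its escape to infinity as $s \to \infty$ each pinch the relevant simple closed curve on $X_s$ and drive $h$ to opposite endpoints of $(0,\infty)$. Monotonicity between these endpoints---which is not implied by the reciprocal identity alone---follows from a direct inspection of the period integral defining $\tau(s)$, or more conceptually from the fact that $s \mapsto X_s$ is a Teichm\"uller geodesic along which the extremal lengths of the horizontal and vertical foliations of the natural quadratic differential are exponentials of the Teichm\"uller parameter.

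I expect the main technical obstacle to lie in the extremal length transformation under the hyperelliptic branched cover---in particular identifying whether each of $\xi, \eta$ lifts to a pair of disjoint curves or to a single double cover on $E_s$, with the labelling of Figure~\ref{rhombpic1} in mind, and determining the correct constant factor in each case---together with the non-automatic monotonicity of $h(s)$, which is the point at which the real-analytic structure of period integrals (or the Teichm\"uller-geodesic perspective) must be invoked.
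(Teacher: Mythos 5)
Your treatment of $\ell_\xi$ and $\ell_\eta$ is exactly the paper's: Lemma~\ref{char rhomb}(2) realizes $\Rh_{\{\xi,\eta\}}$ as the Fenchel--Nielsen slice with waistcurve $\xi$, so the length function is tautologically a smooth bijection, and the same works with the roles of $\xi,\eta$ interchanged.

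The extremal-length half of your argument, however, has a genuine gap at the step where you identify $\xi,\eta$ with ``two standard meridians'' of the elliptic curve $E_s$ and deduce $Ext_\xi \sim 1/h(s)$, $Ext_\eta \sim h(s)$. This identification is false. The fixed-point set $\mathrm{Fix}\,\psi\subset\PML(\Sigma)\cong\RP^1$ consists of the endpoints of the geodesic $\Rh_{\{\xi,\eta\}}$; the paper notes that under the standard identification $\T(\Sigma_{0,4})\cong\H$ this geodesic is $\{\Re z=\tfrac12\}$, so $\xi$ and $\eta$ have slopes $\infty$ and $\tfrac12$. In particular $i(\xi,\eta)=2\lvert1\cdot0-2\cdot1\rvert=4$ on $\Sigma_{0,4}$, and both curves determine the \emph{same} puncture-pairing. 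Their lifts to $E_s$ therefore intersect twice, not once, and do not form a symplectic basis of $H_1(E_s)$; there is no reciprocal identity $h(s)h(1/s)=1$ coming from swapping basis meridians, and the explicit torus formula $Ext_{(p,q)}=\lvert p+q\tau\rvert^2/\Im\tau$ shows that the extremal length of a non-meridian lift such as a $(1,2)$-class is $(1+4h^2)/h$, which is not even monotone in $h$. So the reduction to a single function $h(s)$ with a reflection symmetry simply doesn't set up.

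You do anticipate the danger and offer the Teichm\"uller-geodesic perspective as a fallback, and that fallback \emph{is} the paper's actual proof: $\Rh_{\{\xi,\eta\}}$ is a Teichm\"uller geodesic because it is the fixed set of an isometry in a uniquely geodesic space, and the crucial missing ingredient in your sketch is Lemma~\ref{fix lam}, which identifies the projective classes of the vertical and horizontal foliations of that geodesic with $[\xi]$ and $[\eta]$ (this is where the rhombic symmetry earns its keep). Given that, the exponential behavior of the extremal lengths of the two foliations along the geodesic---via Gardiner--Masur and Gardiner's formula---gives the diffeomorphism directly. In short, your first route fails on the meridian claim; your second route is correct but is precisely the paper's argument, with the role of Lemma~\ref{fix lam} left implicit.
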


\begin{proof}{Fix pants decomposition with waistcurve $\xi$ and transverse curves $\eta^{*}$. By Lemma \ref{char rhomb}, $\ell_{\xi}|_{\Rh_{\{\xi,\eta\}}}$ is injective. Since one may construct $X\in\Rh_{\{\xi,\eta\}}$ with waistcurve of hyperbolic length as desired, and since length functions are smooth, the lemma is clear for $\ell_{\xi}$. The proof for $\ell_{\eta}$ is the same.

Along a Teichm\"{u}ller geodesic, the extremal lengths of the vertical and horizontal foliations each provide diffeomorphisms to $\R^{+}$ (this follows from \cite[Lem.~5.1]{gardiner-masur}, Gardiner's formula \cite[Thm.~8]{gardiner}, and the inverse function theorem). Thus it is enough to check:
\begin{enumerate}{
\item The set $\Rh_{\{\xi,\eta\}}$ is a Teichm\"{u}ller geodesic.
\item The projective classes of its foliations are given by $[\xi],[\eta]\in\PML(\Sigma)$.
}\end{enumerate}

The first follows from the fact that fixed point sets of isometries, in uniquely geodesic spaces such as $\left(\T(\Sigma),d_{\T}\right)$, are convex, while the second follows because the two foliations must be preserved by the rhombic symmetry, and Lemma \ref{fix lam} applies.}\end{proof}

\section{The Convex Core Boundaries of $\Gamma_{t}$}
\label{cc bound}
We return to the goal of understanding $\Gamma_{t}$ via its convex core boundary. Recall from Lemma \ref{sym qf} that $\Psi$ is an order-4 orientation-reversing isometry of $\Gamma_{t}$, cyclically permuting the punctures and preserving the simple closed curves $\delta_{1}\delta_{3}$ and $\delta_{2}\delta_{4}$. From here on we fix the the notation $\xi:=\delta_{1}\delta_{3}$ and $\eta:=\delta_{2}\delta_{4}$. The reader may notice that the isometry $\Psi$ is $\{\xi,\eta\}$-rhombic. Equation (\ref{prop fix ex}) now becomes
\begin{equation}
\label{preserve Rh}
\sigma_{M}\left(\Rh_{\{\xi,\eta\}}\right)\subset\Rh_{\{\xi,\eta\}}.
\end{equation}

Below, we present the computation of the bending angles. We choose the branch $[0,\pi]$ for $\cos^{-1}$ and $[0,2\pi)$ for $\arg$. Note that the existence of a `top' and `bottom' of $\Gamma_{t}$, relative to $\hat{\Gamma}_{t}$, implies that there is a `top' and `bottom' bending lamination. The computation of the bending measure for the top lamination is computable by the same methods presented as for the bottom.

\begin{lemma}{For $t\in U$, the top and bottom surfaces of the convex core boundary of $\Gamma_{t}$ are both in $\Rh_{\{\xi,\eta\}}$. The bending lamination on the bottom
of the convex core boundary is $\theta(t)\cdot\xi$, for 
\begin{equation}
\label{theta}
\theta(t)=\cos^{-1}\left(\frac{2t^{3}(3-t^{2})}{(1+t^{2})^{2}}\right).
\end{equation}}
\label{bend cc}
\end{lemma}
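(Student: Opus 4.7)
For the first assertion, that both convex core boundaries of $\Gamma_{t}$ lie in $\Rh_{\{\xi,\eta\}}$, I appeal directly to what is already in place. Lemma \ref{sym qf} gives $[\hat{\Gamma}_{t}]\in\mathrm{Fix}\;\Psi$, and the naturality of $\mathrm{AB}_{M}$ with respect to $MCG^{*}(M)$ implies $X_{t}=\mathrm{AB}_{M}([\hat{\Gamma}_{t}])\in\mathrm{Fix}\;r(\Psi)$. The inclusion in equation (\ref{prop fix ex}) then puts $Z_{t}=\sigma_{M}(X_{t})\in\mathrm{Fix}\;r(\Psi)$. Since $r(\Psi)$ is $\{\xi,\eta\}$-rhombic by Lemma \ref{sym qf}, Lemma \ref{char rhomb} identifies $\mathrm{Fix}\;r(\Psi)$ with $\Rh_{\{\xi,\eta\}}$, placing both $X_{t}$ and $Z_{t}$ there.

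For the bending lamination, I first locate its support. The M\"{o}bius map $\Psi_{t}$ normalizes $\Gamma_{t}$ by Lemma \ref{sym qf}, and so descends to an isometry of the quasi-Fuchsian manifold $\H^{3}/\Gamma_{t}$ that preserves the convex core. Since $\Psi$ is a self-map of $M$ and the top convex core boundary is naturally identified (via $\mathrm{AB}_{M}$) with $\partial M$, this lifted isometry preserves the top; by elimination it also preserves the bottom. The bottom bending lamination $\beta_{t}\in\ML(\Sigma)$ is therefore invariant under $r(\Psi)$, and Lemma \ref{fix lam} forces $\beta_{t}$ to be a non-negative multiple of $\xi$ or of $\eta$. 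To single out $\xi$, I would deform from $t=1$, where the convex core is totally geodesic and the ideal polyhedron with vertices $\{1,0,-1,it,-it,\infty\}$ remains a fundamental domain for the convex hull of $\hat{\Gamma}_{t}$: an inspection of which edges develop nonzero exterior dihedral angle and how those edges descend through the face identifications of $A_{t}$, $B_{t}$ identifies $\xi=\delta_{1}\delta_{3}$ (rather than $\eta=\delta_{2}\delta_{4}$) as the support.

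To establish the formula for the weight, I use the relation $\iota_{*}(\delta_{3})=A\,\iota_{*}(\delta_{1})^{-1}A^{-1}$ from \S\ref{example} to write $\rho_{t}(\xi)=[\rho_{t}(\delta_{1}),A_{t}]$, a commutator, and apply Fricke's trace identity
\[
\mathrm{tr}[X,Y]=\mathrm{tr}^{2}X+\mathrm{tr}^{2}Y+\mathrm{tr}^{2}(XY)-\mathrm{tr}(X)\,\mathrm{tr}(Y)\,\mathrm{tr}(XY)-2.
\]
With $\mathrm{tr}\,\rho_{t}(\delta_{1})=\pm 2$ (parabolic) and $\mathrm{tr}\,A_{t}=\pm 2t/(t-i)$, and computing $\mathrm{tr}(A_{t}^{-1}B_{t})$ from (\ref{rhodef}), one obtains $\mathrm{tr}^{2}\rho_{t}(\xi)$ as an explicit rational function of $t$. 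Since $\xi$ is a leaf of the bending lamination, the loxodromic element $\rho_{t}(\xi)$ has complex length $\lambda=\ell+i\theta(t)$, where $\ell$ is the length of the geodesic in the pleated surface (equivalently in $\H^{3}/\Gamma_{t}$); thus $\cosh\lambda=\mathrm{tr}^{2}\rho_{t}(\xi)/2-1=\cosh\ell\cos\theta+i\sinh\ell\sin\theta$. Eliminating $\ell$ between the real and imaginary parts and simplifying yields the formula $\cos\theta=2t^{3}(3-t^{2})/(1+t^{2})^{2}$; the branch choice $[0,\pi]$ for $\cos^{-1}$ is justified by continuity at $t=1$, where $\theta(1)=0$ matches the totally geodesic case.

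The principal obstacle is the combinatorial step isolating $\xi$ from $\eta$: the symmetry argument alone only reduces the possibilities to this dichotomy, and the resolution requires carefully threading the edge equivalence classes of the polyhedral fundamental domain through the face pairings to identify the bending curve on the boundary. The trace computation, while algebraically intricate, is mechanical and can be relegated to \S\ref{compute} or the Appendix, consistent with the paper's organization.
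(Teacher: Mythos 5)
The first assertion and the reduction via Lemma \ref{fix lam} to the dichotomy $\{[\lambda^{+}],[\lambda^{-}]\}\subset\{\xi,\eta\}$ are fine and essentially match the paper. The gap is in the proposed computation of $\theta(t)$.

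You propose to read $\theta(t)$ off the imaginary part of the complex translation length of $\rho_{t}(\xi)$, computed via the Fricke identity. This cannot work, because the holonomy of a bending geodesic is \emph{purely hyperbolic}: the two flat pieces of the pleated surface $\partial^{-}C$ meeting along the axis of $\rho_{t}(\xi)$ lie in two totally geodesic half-planes, both invariant under $\rho_{t}(\xi)$, and an isometry of $\H^{3}$ that preserves its own axis together with a totally geodesic plane through that axis has no rotational part. So $\mathrm{tr}\,\tilde\rho_{t}(\xi)$ is real (the paper verifies this in (\ref{expr:positive})), the imaginary part of the complex length is identically zero, and your elimination step recovers only $\theta\equiv 0$. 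The bending angle $\theta(t)$ is a dihedral angle between two distinct maximal support planes meeting along the axis of $\rho_{t}(\xi)$ — it lives in the geometry of the limit set, not in the conjugacy class of $\rho_{t}(\xi)$ alone. The paper produces these two support planes explicitly, as the round circles through the fixed points of $\rho_{t}(\xi)$ and of its neighboring conjugates $\rho_{t}(\delta_{1}^{-1}\xi\delta_{1})$ and $\rho_{t}(\delta_{4}\xi\delta_{4}^{-1})$, and computes the angle between them as the argument of a cross-ratio of four of these fixed points. Your trace computation does supply the hyperbolic length $\ell(\xi,\Gamma_{t})$ used in Lemma \ref{qf info}, but not $\theta(t)$.

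A secondary issue is the step isolating $\xi$ from $\eta$. Your suggestion to deform from $t=1$ and "inspect which edges develop nonzero exterior dihedral angle" is a plausible heuristic, but as stated it is not an argument: near $t=1$ all bending angles are small and you would need to identify, a priori, which polyhedral edges descend to $\xi$ and which to $\eta$, and verify the octahedron remains a fundamental polyhedron. The paper's route is cleaner: at $t=\tfrac12$ it exhibits an explicit support plane $S$ for $\Gamma_{1/2}$ containing the axis of $\rho_{1/2}(\xi)$, and shows $S$ fails to be a support plane for $\hat\Gamma_{1/2}$ by producing two limit points of $\hat\Gamma_{1/2}$ (the fixed points of $B_{1/2}$) on opposite sides of $S$; hence $S$ supports the bottom, so $\xi$ supports the bottom lamination at $t=\tfrac12$. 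Continuity of the bending lamination map (Keen--Series) plus the fact that $\Gamma_{t}$ is non-Fuchsian for $t<1$ (Lemma \ref{param check}) then propagate this identification to all $t\in U$.
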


\begin{proof}{By Lemma \ref{sym qf}, $\Psi_{t}$ is a $\{\xi,\eta\}$-rhombic isometry of $\Gamma_{t}$. It thus sends the convex core to itself, so preserves the convex core boundary pleated surfaces, which are hence in $\Rh_{\{\xi,\eta\}}$. Moreover, $\Psi_{t}$ preserves the bending laminations $\{\lambda^{+},\lambda^{-}\}$, where $\lambda^{+}$ is on the top and $\lambda^{-}$ is on the bottom. By Lemma \ref{fix lam}, $\{[\lambda^{+}],[\lambda^{-}]\}\subset\{\xi,\eta\}$, where $[\cdot]$ denotes the projective class of the measured lamination. Before computing $\theta(t)$, and determining that $\xi=[\lambda^{-}]$, we describe the method in words. 

To compute the bending angle \(\theta(t)\), it is necessary to find a pair of distinct maximal support planes that intersect along the axis of the hyperbolic M\"{o}bius map \(\rho_{t}(\xi)\), i.e.~two half-planes that meet along the axis of $\rho_{t}(\xi)$. This can be achieved by considering the two other lifts of $\xi$ which are the axes of $\rho_{t}\left(\delta_{1}^{-1}\xi\delta_{1}\right)$ and $\rho_{t}\left(\delta_{4}\xi\delta_{4}^{-1}\right)$: One can check easily that \(\left[\delta_{1}^{-1}\cdot\xi\right]\) and \(\left[\delta_{4}\cdot\xi\right]\)---the homotopy classes of the concatenations---are simple closed curves. Thus, for the pair $\left\{\xi,\delta_{1}^{-1}\cdot\xi\right\}$ for example, there is a connected fundamental domain for the action of $\pi_{1}\Sigma$ that has the lifts of these curves on its boundary (motivating the term `neighbors' for this pair, see Figure \ref{neighbors}). The same is true, of course, for $\left\{\xi,\delta_{4}\cdot\xi\right\}$.

\begin{figure}[h]
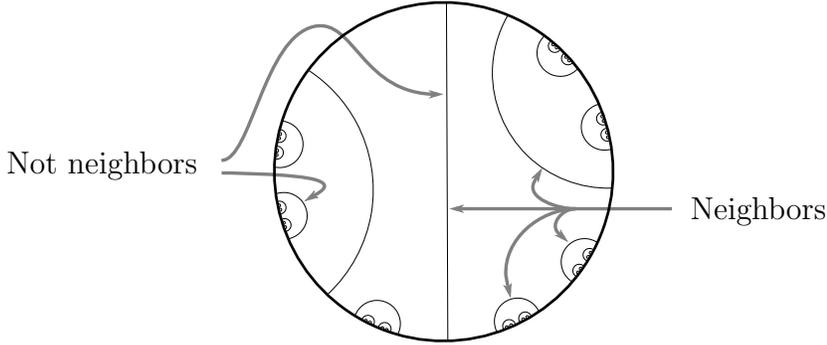

	\centering
	\begin{lpic}[clean]{neighboringliftssend(6cm)}
		\large
		\lbl[]{225,55;Neighbors}
		\lbl[]{-50,74;Not neighbors}
		
	\end{lpic}
	\caption{Lifts for $\xi\in\pi_{1}\Sigma$}
	\label{neighbors}
\end{figure}
The axes of each pair differ by a M\"{o}bius transformation, so their fixed points lie on a round circle. The round circle intersects $\Lambda_{\Gamma_{t}}$ in the four fixed points---non-elliptic elements have fixed points contained in the limit set---and since we know that $\xi$ is the support of a bending lamination for $\Gamma_{t}$, this circle is a maximal support plane for $\Gamma_{t}$. The two pairs of axes determine two maximal support planes intersecting along the axis $\rho_{t}(\xi)$, as desired. 

We thus consider three pairs of fixed points, corresponding to the three chosen lifts of $\xi$: $\mathrm{Fix}\;\rho_{t}\left(\delta_{1}^{-1}\xi\delta_{1}\right)$, $\mathrm{Fix}\;\rho_{t}(\xi)$, and $\mathrm{Fix}\;\rho_{t}\left(\delta_{4}\xi\delta_{4}^{-1}\right)$. The first four points determine one half-plane in $\H^{3}$, and the last four determine a second. We may now compute the bending angle between these half-planes using the argument of a cross-ratio. 

There is an ambiguity in our calculation of support planes: we do not know which side of the totally geodesic plane through a pair of neighboring axes is actually a supporting half-space for either of the groups $\Gamma_{t}$ or $\hat{\Gamma}_{t}$. In order, then, to distinguish between the top and the bottom laminations we will use the following consequence of the definitions: if there exists a support plane for a component $\Omega_{0}\subset\Omega_{\Gamma_{t}}$ which is not simultaneously a support plane for $\hat{\Gamma}_{t}$, then $\Omega_{0}$ is the bottom. Such a support plane is produced explicitly below at $t=\frac{1}{2}$, for the domain facing the convex core boundary component with bending supported on $\xi$.

In fact, for our purposes, it is enough to distinguish the top lamination from the bottom lamination at \(t=\frac{1}{2}\): The bending lamination map from \(\GF(M)\) to \(\ML(\Sigma)\) is continuous \cite[Thm.~4.6.]{keen-series}, and its image along the path $\{[\rho_{t}]\hspace{.1cm}|\hspace{.1cm}t\in U\}$, by the argument above, is confined to the subset of $\ML(\Sigma)$ supported on either $\xi$ or $\eta$. This subset is a pair of rays, intersecting only at the `zero' lamination. Thus, for instance, if $\xi$ switches at some point from the support for the bottom lamination of $\Gamma_{t}$ to the support for the top lamination of $\Gamma_{t}$, the Kleinian group $\hat{\Gamma}_{t}$ must have the `zero' bending lamination, i.e.~totally geodesic boundary. In this case, $\Gamma_{t}$ would be Fuchsian at this value of $t$. By Lemma \ref{param check}, \([\rho_{t}]\) is not Fuchsian for \(t<1\), hence the lamination on top at \(t=\frac{1}{2}\) must remain on top for all \(t<1\).

We proceed with the calculation of \(\theta(t)\). Denote the fixed points of $\rho_{t}(\xi)$ by $p_{t}^{\pm}$, where the choice is fixed by asking that the root in the expression of the fixed points be positive for $p^{+}$ and negative for $p^{-}$. The fixed points of $\rho_{t}\left(\delta_{1}^{-1}\xi\delta_{1}\right)$ and $\rho_{t}\left(\delta_{4}\xi\delta_{4}^{-1}\right)$ are thus, respectively, $\rho_{t}\left(\delta_{1}^{-1}\right)\cdot p_{t}^{\pm}$, and $\rho_{t}(\delta_{4})\cdot p_{t}^{\pm}$. The root below refers to the positive one.
\begin{figure}[h]
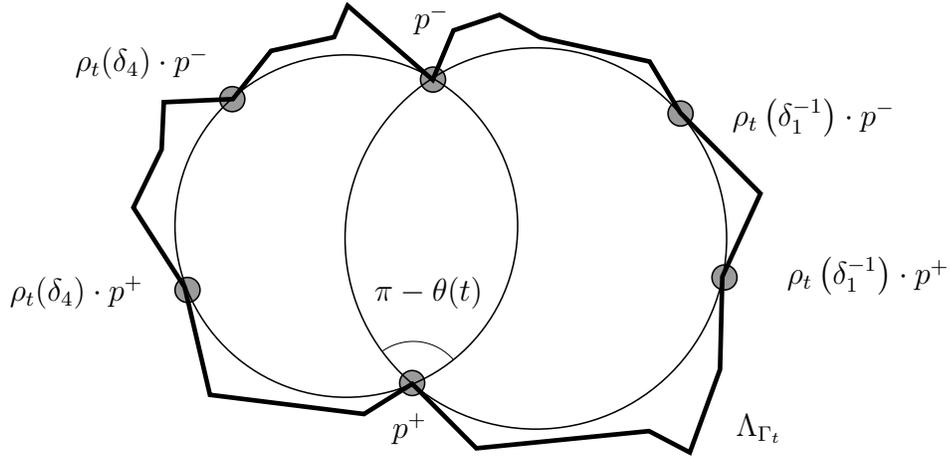

	\centering
		\begin{lpic}[clean]{angcalcsend(8.4cm)}
		\large
		\lbl[]{51,05;$p^{+}$}
		\lbl[]{55,80;$p^{-}$}
					\lbl[]{54.5,30;$\pi-\theta(t)$}
		\lbl[]{-10,30;$\rho_{t}(\delta_{4})\cdot p^{+}$}
		\lbl[]{2,72;$\rho_{t}(\delta_{4})\cdot p^{-}$}
		\lbl[]{135,33;$\rho_{t}\left(\delta_{1}^{-1}\right)\cdot p^{+}$}
		\lbl[]{125,62;$\rho_{t}\left(\delta_{1}^{-1}\right)\cdot p^{-}$}
		\lbl[]{115,5;$\Lambda_{\Gamma_{t}}$}
		\end{lpic}
	\caption{Maximal support planes for $\Gamma_{t}$, and the bending angle $\theta(t)$}
	\label{angcalc}
\end{figure}

We use the notation for the cross-ratio \(\;[a:b:c:d]=\frac{d-a}{d-c}\frac{b-c}{b-a}\). 
\begin{align*}
&&\left[\right. & \left.p_{t}^{+}:\rho_{t}(\delta_{4})\cdot p_{t}^{+}:p_{t}^{-}:\rho_{t}\left(\delta_{1}^{-1}\right)\cdot p_{t}^{+}\right]=\frac{1}{2t(-i+t)(i+t)^{3}}\cdot\\
&&& \left(-i-2t-2it^{2}-13i t^{4}+2t^{5}+4it^{6}+\right.\\
&&& \left.\left(-i-2t-it^{2}+2t^{3}\right)\sqrt{1+6t^{2}+17t^{4}-4t^{6}}\right)\\
&&&=\;\frac{1+5t^{2}+\sqrt{1+6t^{2}+17t^{4}-4t^{6}}}{2t(t^{2}+1)^{3}}\cdot\\
&&&\left(2t^{3}(t^{2}-3)+\;i\;(1-t^{2})\sqrt{1+6t^{2}+17t^{4}-4t^{6}}\right).
\end{align*}

It is easy to see that the imaginary part of this cross-ratio, for $t\in(0,1)$, is positive (cf. (\ref{expr:positive})). Thus \(\theta(t)=\pi-\arg\;\left[p_{t}^{+}:\rho_{t}(\delta_{4})\cdot p_{t}^{+}:p_{t}^{-}:\rho_{t}\left(\delta_{1}^{-1}\right)\cdot p_{t}^{+}\right]\). Taking the argument of the expression above gives
\begin{align*}
\theta(t) = \cos^{-1}\left(\frac{2t^{3}(3-t^{2})}{(1+t^{2})^{2}}\right).
\end{align*}

It remains to distinguish the top of the convex core boundary from the bottom. Following the outline described above, we fix some notation at the point \(t=\frac{1}{2}\). Let \(S\) be the circle through the points \(p_{\frac{1}{2}}^{+}\), \(p_{\frac{1}{2}}^{-}\), and \(\rho_{\frac{1}{2}}(\delta_{1}^{-1})\cdot p_{\frac{1}{2}}^{+}\), let its center be \(c\) and its radius \(r\). Noting that \(B_{\frac{1}{2}}\) is loxodromic, let its fixed points be \(p_{1},p_{2}\in\Lambda_{\hat{\Gamma}_{\frac{1}{2}}}$. The following computations are straightforward. The root is chosen with positive imaginary part.
\[c=-\frac{1}{2}-i\;;\hspace{.3cm}
r=\frac{\sqrt{5}}{2}\;;\hspace{.3cm}
p_{1}=\left(\frac{1+2i}{5}\right)\sqrt{7+i}\;;\hspace{.3cm}
p_{2}=-\left(\frac{1+2i}{5}\right)\sqrt{7+i}\]
\begin{align*}
|c-p_{1}|^{2}&=\frac{5}{4}+\sqrt{2}+\Re\sqrt{7+i}>\frac{5}{4}=r^{2}\\
|c-p_{2}|^{2}&=\frac{5}{4}+\sqrt{2}-\Re\sqrt{7+i}<\frac{5}{4}=r^{2}
\end{align*}
(Note that $\Re\sqrt{7+i}=\sqrt{\frac{7+5\sqrt{2}}{2}}>\sqrt{2}$.)

\begin{figure}[h]
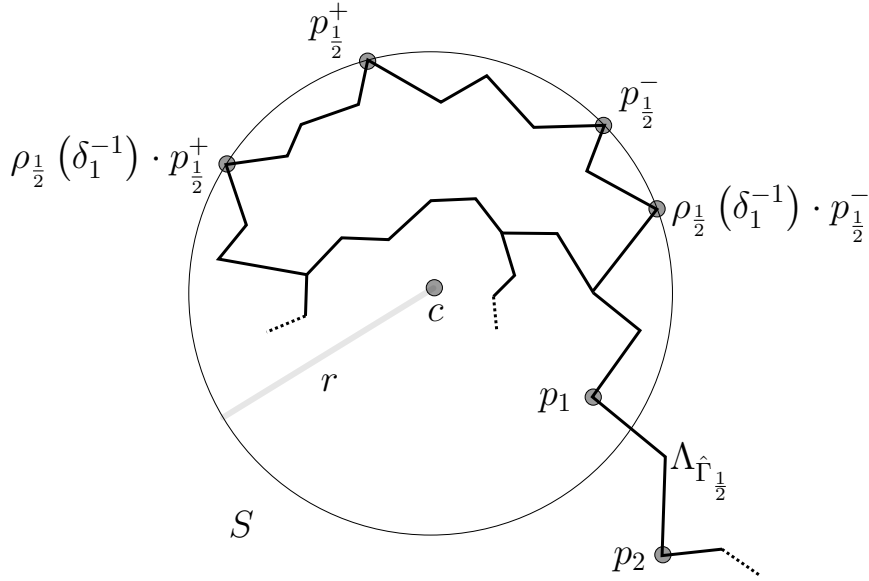

\vspace{.5cm}
	\centering
		\begin{lpic}[clean]{botlamsend(,7.cm)}
			\Large
			\lbl[]{70,75;$c$}
			\lbl[]{40,55;$r$}
			\lbl[]{15,15;$S$}
							\lbl[]{165,103;$\rho_{\frac{1}{2}}\left(\delta_{1}^{-1}\right)\cdot p_{\frac{1}{2}}^{-}$}
			\lbl[]{104,50;$p_{1}$}
			\lbl[]{125,5;$p_{2}$}
			\lbl[]{40,155;$p_{\frac{1}{2}}^{+}$}
			\lbl[]{128,135;$p_{\frac{1}{2}}^{-}$}
			\lbl[]{-22,117;$\rho_{\frac{1}{2}}\left(\delta_{1}^{-1}\right)\cdot p_{\frac{1}{2}}^{+}$}
							\lbl[]{145,30;$\Lambda_{\hat{\Gamma}_{\frac{1}{2}}}$}
		\end{lpic}
		\caption{A support plane for $\Gamma_{\frac{1}{2}}$ fails to be a support plane for $\hat{\Gamma}_{\frac{1}{2}}$}
		\label{botlam}
\end{figure}

A support plane for $\hat{\Gamma}_{\frac{1}{2}}$ must be the boundary of a supporting half-space, which must intersect $\CP^{1}$ in the domain of discontinuity. Because there are points $p_{1},p_{2}\in\Lambda_{\hat{\Gamma}_{\frac{1}{2}}}$ in the interior and exterior of the round disk $S$, the geodesic plane which meets $\CP^{1}$ in $S$ cannot be the boundary of a supporting half-space for $\hat{\Gamma}_{\frac{1}{2}}$. Because $S$ is a support plane for $\Gamma_{\frac{1}{2}}$ but not for $\hat{\Gamma}_{\frac{1}{2}}$, $S$ is a support plane for the bottom of $\Gamma_{\frac{1}{2}}$. (See Figure \ref{botlam} for a schematic where the exterior of $S$ is a supporting half-space for the bottom of $\Gamma_{\frac{1}{2}}$). Since $S$ is on the side of $\Gamma_{\frac{1}{2}}$ with bending lamination $\xi$, this implies that \(\xi\) is the support of the bending lamination on the bottom.}
\end{proof}

For $\gamma\in\pi_{1}(M)$, let $\ell(\gamma,\Gamma_{t})$ denote the hyperbolic translation length of $\rho_{t}(\gamma)$ in $\H^{3}$. Using Lemma \ref{bend cc} we may now find $\inf U$ explicitly.

\begin{lemma}{The bottom (resp. top) convex core boundary surface of $\Gamma_{t}$ is the element of $\Rh_{\{\xi,\eta\}}$ determined by $\ell(\xi,\Gamma_{t})$ (resp. $\ell(\eta,\Gamma_{t})$), where

\begin{equation}
\label{ell}
\ell(\xi,\Gamma_{t})=2\;\cosh^{-1}\left(\frac{1+8t^{2}+21t^{4}-2t^{6}}{2t^{2}(1+t^{2})^{2}}\right)
\end{equation}
and
$$\ell(\eta,\Gamma_{t})=2\;
\cosh^{-1}\left(\frac{-1+4t^{2}+74t^{4}+196t^{6}-t^{8}}{(1+t^{2})^{4}}\right).$$
Moreover, $\frac{1}{2}\left(5+3\sqrt{3}-\sqrt{44+26\sqrt{3}}\right)=\inf U$.}
\label{qf info}
\end{lemma}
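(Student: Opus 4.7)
First I would identify the two convex-core boundary surfaces. By Lemma \ref{bend cc} both lie in $\Rh_{\{\xi,\eta\}}$, with $\xi$ supporting the bottom bending lamination. Since $\Psi$ descends to an isometry of $\nicefrac{\H^{3}}{\hat\Gamma_t}$, it preserves the distinguished component $\Omega_{\hat\Gamma_t}\cap\Omega_{\Gamma_t}$ and therefore fixes---rather than swaps---the top and bottom of $\Gamma_t$'s convex core. Coincident bending laminations on top and bottom would force $\Gamma_t$ to be Fuchsian, which by Lemma \ref{param check} holds only at $t=1$; the top must therefore be bent along $\eta$, the only remaining $\Psi$-fixed element of $\PML(\Sigma)$ by Lemma \ref{fix lam}. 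Lemma \ref{param rhomb} then pins each surface in $\Rh_{\{\xi,\eta\}}$ by the hyperbolic length of its bending curve; since a simple closed bending geodesic is the projection of the $\H^{3}$-axis of $\rho_t(\gamma)$, this length equals the real translation length $\ell(\gamma,\Gamma_t)$, yielding the stated parameterizations.

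Next I would compute the translation lengths via $\ell=2\cosh^{-1}(|tr|/2)$ for purely hyperbolic elements. The conjugation relations of Lemma \ref{sym qf} show that $\Psi_t\rho_t(\xi)\Psi_t^{-1}$ is $\PSL_{2}\C$-conjugate to $\rho_t(\xi)$, and analogously for $\eta$; combined with the identity $tr(\Psi_tM\Psi_t^{-1})=\overline{tr\,M}$ (from the anti-holomorphicity of $\Psi_t(z)=it/\bar z$), this forces $tr\,\rho_t(\xi)$ and $tr\,\rho_t(\eta)$ to be real for real $t$. Substituting the matrices (\ref{rhodef}) into $A_t^{-2}B_tA_tB_t^{-1}A_t$ and $B_t^{-2}A_t^{-2}B_t^{2}A_t^{2}$ and simplifying produces (\ref{ell}) and its analogue for $\ell(\eta,\Gamma_t)$.

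For $\inf U$: by openness of $U$ and $\Psi$-invariance of the path, the first degeneration must be the appearance of a parabolic on a $\Psi$-fixed simple closed curve, hence on $\xi$ or $\eta$ by Lemma \ref{fix lam}. The argument of the $\cosh^{-1}$ in (\ref{ell}) exceeds $1$ throughout $(0,1)$---equivalent to $1+6t^{2}+17t^{4}-4t^{6}>0$, immediate from positivity at the endpoints and the sign of the derivative---so $\rho_t(\xi)$ remains loxodromic and the exit must occur via $\rho_t(\eta)$ becoming parabolic. Setting $\cosh(\ell(\eta,\Gamma_t)/2)=1$ yields
\[ t^{8}-96\,t^{6}-34\,t^{4}+1=0, \]
which factors as $(t^{4}-10t^{3}+2t^{2}-2t+1)(t^{4}+10t^{3}+2t^{2}+2t+1)$. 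The second factor has no positive real root, and the first splits over $\mathbb{Q}(\sqrt{3})$ as
\[ \bigl(t^{2}-(5+3\sqrt{3})t+(2+\sqrt{3})\bigr)\bigl(t^{2}-(5-3\sqrt{3})t+(2-\sqrt{3})\bigr); \]
the second quadratic has discriminant $44-26\sqrt{3}<0$, while the smaller root of the first is $t_0=\frac{1}{2}\bigl(5+3\sqrt{3}-\sqrt{44+26\sqrt{3}}\bigr)\in(0,1)$. Since $\eta$ is not homotopic into $P$, $\rho_{t_0}(\eta)$ is an accidental parabolic and $[\rho_{t_0}]\notin\GF(M)$, so $\inf U=t_0$.

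The main obstacle is algebraic rather than conceptual: the trace calculations are routine but lengthy, and the factorization of the degree-$8$ polynomial is the delicate algebraic step. The subtlest conceptual point is restricting the candidate pinched curves to $\{\xi,\eta\}$, which requires both the $\Psi$-invariance of the path and that $\Psi$ fixes top and bottom rather than swapping them---without the latter, the identification of $\eta$ as the top bending curve would fail and the reduction to a single polynomial equation would not be available.
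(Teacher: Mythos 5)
Your derivation of the length formulas and the factorization of $t^{8}-96t^{6}-34t^{4}+1$ are correct, and your $\mathbb{Q}(\sqrt{3})$-factorization is an equivalent (if more explicit) repackaging of the paper's factorization $(1+t^{2})^{2}\mp 2t(1+5t^{2})$. Your sanity-check that $tr\rho_{t}(\xi),tr\rho_{t}(\eta)$ are real via the anti-holomorphic conjugation $\Psi_{t}$ is a nice touch, though of course the actual verification of \eqref{ell} still requires the matrix substitution, as you note.

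The genuine gap is in the identification of $\inf U$. You assert that ``by openness of $U$ and $\Psi$-invariance of the path, the first degeneration must be the appearance of a parabolic,'' but neither openness of $U$ nor the symmetry rules out the a priori possibilities that $\rho_{t_{0}}$ fails to be discrete, or is discrete and faithful but geometrically infinite. The paper closes exactly this gap by invoking Bonahon's theorem \cite[Thm.~A]{bonahon}: since $\Sigma\hookrightarrow M$ is incompressible, the algebraic limit $\hat{\Gamma}_{t_{0}}$ (discrete and faithful by standard convergence arguments) is geometrically tame, hence has a well-defined end invariant $\lambda$ which, being $\Psi$-invariant, lies in $\{\xi,\eta\}$ by Lemma \ref{fix lam}. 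Because $\xi,\eta$ are simple closed curves rather than filling laminations, the end cannot be geometrically infinite, so $\hat{\Gamma}_{t_{0}}$ is geometrically finite with an accidental parabolic on $\xi$ or $\eta$; your trace computation then forces $\eta$ and pins down $t_{0}$. Without the tameness step your argument only gives $\inf U\ge t_{0}$ (from $\rho_{t_{0}}(\eta)$ being parabolic), not the reverse inequality.

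A smaller issue: your first-paragraph claim that ``coincident bending laminations on top and bottom would force $\Gamma_{t}$ to be Fuchsian'' is in fact a theorem of Bonahon--Otal (the top and bottom bending laminations of a non-Fuchsian quasi-Fuchsian group must fill the surface), but it is not self-evident and should be cited if used. The paper sidesteps it in the proof of Lemma \ref{bend cc} by a continuity argument on the bending-lamination map, combined with the explicit support-plane computation at $t=\frac{1}{2}$, which keeps everything self-contained.
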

\begin{proof}
{For a loxodromic isometry $A\in\PSL_{2}\C$, its translation length is given by $\ell(A)=2\cosh^{-1}\left(\frac{|trA|}{2}\right)$. Thus equations (\ref{ell}) can be computed directly. For $t\in U$, the hyperbolic length of the curve which supports the bending lamination of $\Gamma_{t}$, on the hyperbolic surface on the top or the bottom of the convex core boundary, is precisely the length of the curve in the quasi-Fuchsian manifold $\nicefrac{\H^{3}}{\Gamma_{t}}$. By Lemma \ref{param rhomb}, this quantity determines the convex core boundary surface. 

Let $t_{0}=\inf\;U$. Because $\Sigma$ is incompressible in $M$, by \cite[Thm.~A]{bonahon} the hyperbolic manifold $\nicefrac{\H^{3}}{\hat{\Gamma}_{t_{0}}}$ is {\it geometrically tame}. In particular, it has an end invariant $\lambda$, a geodesic lamination which is the union of limits of simple closed geodesics exiting the end and/or simple closed curves that are parabolic in $\hat{\Gamma}_{t_{0}}$. Since $\hat{\Gamma}_{t_{0}}\in\mathrm{Fix}\;\Psi$, the invariant $\lambda$ must also be preserved by the rhombic symmetry. Because the natural map from $\PML(\Sigma)$ to geodesic laminations is equivariant for the action of $MCG^{*}(\Sigma)$, Lemma \ref{fix lam} implies that $\lambda\in\{\xi,\eta\}$.

Thus $\hat{\Gamma}_{t_{0}}$ is geometrically finite with end invariant either $\xi$ or $\eta$. 

We check below the following computational facts:
\begin{enumerate}
\item $|tr\tilde{\rho}_{t}(\xi)|>2$ for all $t\in\left(0,1\right]$. 
\item For $1>t>\frac{1}{2}\left(5+3\sqrt{3}-\sqrt{44+26\sqrt{3}}\right)$, we have $|tr\tilde{\rho}_{t}(\eta)|>2$, and $tr^{2}\rho_{t}(\eta)=4$ at $t=\frac{1}{2}\left(5+3\sqrt{3}-\sqrt{44+26\sqrt{3}}\right)$. 
\end{enumerate}

As a result of (1), the end invariant of $\hat{\Gamma}_{t_{0}}$ must be $\eta$. As a result of (2), we may compute explicitly the lower bound $t_{0}=\frac{1}{2}\left(5+3\sqrt{3}-\sqrt{44+26\sqrt{3}}\right)$.
\vspace{.2cm}\\
\noindent(1): For all $t\in(0,1]$,
\begin{align}\label{expr:positive}
2+tr\tilde{\rho}_{t}(\xi)&=\frac{-1-8t^{2}-21t^{4}+2t^{6}}{t^{2}(1+t^{2})^{2}}+2\\\nonumber&=\frac{-1-6t^{2}-17t^{4}+4t^{6}}{t^{2}(1+t^{2})^{2}}\\\nonumber
&=\frac{-1-6t^{2}-13t^{4}-4t^{4}(1-t^{2})}{t^{2}(1+t^{2})^{2}}<0.
\end{align}
\vspace{.2cm}\\
\noindent(2): For all $t\in(t_{0},1]$,
\begin{align*}
2+tr\tilde{\rho}_{t}(\eta)&=\frac{2(1-4t^{2}-74t^{4}-196t^{6}+t^{8})}{(1+t^{2})^{4}}+2\\&=\frac{4(1-34t^{4}-96t^{6}+t^{8})}{(1+t^{2})^{4}}\\
&=\frac{4\left((1+t^{2})^{2}-2t(1+5t^{2})\right)\left((1+t^{2})^{2}+2t(1+5t^{2})\right)}{(1+t^{2})^{4}}.
\end{align*}
One may check directly (using the explicit equation for the zeros of a quartic) that $(1+t^{2})^{2}-2t(1+5t^{2})$ has two real zeros, the smaller of which is $t_{0}=\frac{1}{2}\left(5+3\sqrt{3}-\sqrt{44+26\sqrt{3}}\right)$. The larger zero, $\frac{1}{2}\left(5+3\sqrt{3}+\sqrt{44+26\sqrt{3}}\right)$, is greater than 1, and for $t=1$, $(1+t^{2})^{2}-2t(1+5t^{2})$ is negative. Thus $2+tr\tilde{\rho}_{t}(\eta)<0$ for all $t\in(t_{0},1]$, and $tr^{2}\rho_{t_{0}}(\eta)=4$.}
\end{proof}  

For $t\in(t_{0},1]$, recall the notation $[\Gamma_{t}]=Q(X_{t},Z_{t})$, so that in particular $\sigma_{M}(X_{t})=Z_{t}$. Denote the bottom surface of the convex core boundary of $\Gamma_{t}$ by $Y_{t}$. That is, recalling Lemma \ref{param rhomb}, $$Y_{t}=\left(\ell_{\xi}|_{\Rh_{\{\xi,\eta\}}}\right)^{-1}\left(\ell(\xi,\Gamma_{t})\right).$$

\section{Non-Injectivity of $t\mapsto Z_{t}$}
\label{non-injectivity}

In this section we use a description of $Z_{t}$ as a grafted surface to show the non-monotonicity of the function $Ext_{\xi}Z_{\--}:(t_{0},1]\to\R^{+}$. Since $Z_{t}\in\Rh_{\{\xi,\eta\}}$ by Lemma \ref{sym qf}, and since $Ext_{\xi}:\Rh_{\{\xi,\eta\}}\to\R^{+}$ is a diffeomorphism by Lemma \ref{param rhomb}, the non-monotoniciy of $Ext_{\xi}Z_{\--}$ implies the non-injectivity of $t\mapsto Z_{t}$, for $t\in(t_{0},1]$. 

Recall that {\it grafting} is a map (see \cite{kamishima-tan} and \cite{mcmullen2} for details)
$$gr:\ML(\Sigma)\times\T(\Sigma)\to\T(\Sigma).$$ Briefly, for $\gamma\in\mathcal{S}$, the Riemann surface $gr(\tau\cdot\gamma,X)$ is obtained by cutting open $X$ along the geodesic representative for $\gamma$, and inserting a Euclidean annulus of height $\tau$.

For a geometrically finite hyperbolic 3-manifold $N$, with conformal boundary $X_{\infty}\in\T(\partial N)$, convex core boundary $X_{cc}\in\T(\partial N)$, and bending lamination $\lambda\in\ML(\partial N)$, grafting provides the description:
$$X_{\infty}=gr(\lambda,X_{cc}).$$
Employing Lemmas \ref{bend cc} and \ref{qf info}, we thus have
\begin{equation}
\label{image eqn}
Z_{t}=gr(\theta(t)\cdot\xi,Y_{t}).
\end{equation}
We note that there is also a projective version of grafting, $Gr$, that `covers' conformal grafting ~\cite{kamishima-tan}. This provides the natural (quasi-Fuchsian) projective structure $\mathcal{Z}_{t}=Gr(\theta(t)\cdot\xi,Y_{t})$ which the surface $Z_{t}$ inherits as the boundary of $\nicefrac{\H^{3}}{\Gamma_{t}}$.

In fact, it will be more direct to deal with a conformal invariant closely related to extremal length, the {\it modulus} of a quadrilateral (see \cite{lehto}, \cite{ahlfors} for details). For $X\in\Rh_{\{\xi,\eta\}}$, consider the rhombic coordinate on $X$ (see Figure \ref{rhombpic1}). Let $Q_{X}$ denote the quadrilateral given by $\{z\in\CP^{1}\;|\;\text{Re}(z),\text{Im}(z)>0\}$, with vertices $\{is,0,1,\infty\}$, and horizontal sides given by the arcs $(0,1)$ in the positive real axis and $(is,\infty)$ in the positive imaginary axis. Denote the modulus of a quadrilateral, or annulus, by $Mod(\cdot)$.

\begin{lemma}{We have $Ext_{\xi}X=4\;Mod(Q_{X})$.}
\label{ext rhomb}
\end{lemma}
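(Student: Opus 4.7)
The plan is to exploit the $(\mathbb{Z}/2)^{2}$ symmetry of $X$ in rhombic coordinates. The group $G=\langle z\mapsto \bar z,\; z\mapsto -\bar z\rangle$ preserves $X$, has the four quadrants of $\CP^{1}$ as fundamental domains, and realizes $Q_{X}$ as the closure of the open first quadrant, viewed as a conformal quadrilateral.

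First I would locate the maximal-modulus conformal annulus $A\subset X$ with core in the homotopy class of $\xi$. By uniqueness of the Jenkins--Strebel quadratic differential associated to $\xi$, the annulus $A$ is $G$-invariant, and its complement is a critical graph consisting of two $G$-invariant arcs joining the puncture pairs $\{\pm 1\}$ and $\{\pm is\}$. Disjointness of the two arcs, together with the requirement that the arc joining $\pm 1$ lie in the disk bounded by $\xi$ on the $\{\pm 1\}$-side (and similarly for $\pm is$), forces the configuration: the real segment $[-1,1]$ through $0$, and the imaginary arc from $is$ through $\infty$ to $-is$. Cutting $A$ along the $G$-fixed interior arcs (the portions of the real and imaginary axes that lie in $A$) then produces four congruent sector-quadrilaterals, each conformally isomorphic to $Q_{X}$: the horizontal sides $(0,1)$ and $(is,\infty)$ sit on $\partial A$, while the vertical sides $(1,\infty)$ and $(0,is)$ coincide with the internal seams.

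The modulus computation finishes by uniformizing $A$ via a $G$-equivariant conformal equivalence with $\{1<|w|<R\}$ and passing to flat Euclidean coordinates via $w\mapsto\log w$. Each sector becomes a Euclidean rectangle of dimensions $\log R\times \pi/2$, with the horizontal sides of $Q_{X}$ (lying on $\partial A$) having length $\pi/2$ and the vertical sides (lying on the seams) having length $\log R$. With the paper's convention for the modulus of a quadrilateral, this gives $\mathrm{Mod}(Q_{X})=\pi/(2\log R)$ while $\mathrm{Mod}(A)=(\log R)/(2\pi)$, so $\mathrm{Ext}_{\xi}(X)=1/\mathrm{Mod}(A)=4\,\mathrm{Mod}(Q_{X})$. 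The main subtlety lies in the first step: ruling out the alternative $G$-invariant configurations of the critical graph and confirming the specific pairing of seams with puncture pairs; once that topological identification is settled, the modulus calculation is essentially bookkeeping in the round-annulus model.
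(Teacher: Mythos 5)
Your overall plan mirrors the paper's proof: identify the maximal-modulus annulus $A_\xi$, use the $(\Z/2)^2$ symmetry to cut it along the axis arcs into four copies of $Q_X$, and compare moduli in the round-annulus model. The problem is in the step you flag as "the main subtlety," the identification of the critical graph, and you resolve it the wrong way. You place the arc joining $\pm 1$ through $0$ (the segment $[-1,1]$) and the arc joining $\pm is$ through $\infty$; the paper has exactly the opposite, $A_\xi = \C\setminus\bigl((-\infty,-1)\cup(1,\infty)\cup(-is,is)\bigr)$, so that the $\{\pm is\}$-arc is the segment through $0$ and the $\{\pm 1\}$-arc passes through $\infty$. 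The correct assignment is forced by the marking: in the proof of Lemma 6.1 the paper's geodesic from $is$ to $-is$ passes through $\infty$ and is cut by $\xi$ into a compact middle piece lying in the opposite complementary disk, so $\infty$ sits in the disk bounded by $\xi$ containing $\{1,-1\}$, and therefore $0$ sits with $\{\pm is\}$.

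This is not a harmless relabeling. The annulus you describe has core curve separating $\{1,-1,0\}$ from $\{is,-is,\infty\}$, which is $\psi$-invariant but is a different slope from $\xi$ (it places $0$ and $\infty$ on opposite sides and meets $\xi$ essentially); by Lemma 6.3 it is $\eta$, so your $A$ is the extremal annulus for $\eta$, not $\xi$, and the step $\mathrm{Ext}_\xi(X)=1/\mathrm{Mod}(A)$ is unjustified. Correspondingly, your claim that the horizontal sides $(0,1),(is,\infty)$ of $Q_X$ lie on $\partial A$ is backwards: in the correct picture these are precisely the interior radial seams (the ``vertical geodesics'' of $A_\xi$), while the vertical sides $(1,\infty),(0,is)$ are the boundary arcs, and the paper's modulus is the extremal length of curves joining the seams. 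The apparent numerical agreement in your last line ($\tfrac{2\pi}{\log R}=4\cdot\tfrac{\pi}{2\log R}$) is an artifact of this double swap: the $R$ appearing there is the radius ratio for the wrong annulus, and $\pi/(2\log R)$ computed for that $R$ is not $\mathrm{Mod}(Q_X)$ in the paper's normalization.
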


\begin{proof}{
The quantity $Ext_{\xi}X$ can also be computed as the modulus of the unique maximal modulus annulus containing $\xi$ as its core curve. Consider this annulus $A_{\xi}$. Since $\xi$ is preserved by the anti-holomorphic maps $z\mapsto\overline{z}$ and $z\mapsto-\overline{z}$, the annulus $A_{\xi}$ is preserved by these maps. This is enough to ensure that $A_{\xi}=\C\setminus\left((-\infty,-1)\cup(1,\infty)\cup(-is,is)\right)$. (Alternatively, in \cite[p.~23, II.]{ahlfors}, there is a classical extremal length problem whose solution---due to Teichm\"{u}ller---implies that $A_{\xi}$ has the given form).

Since the curves 
\begin{itemize}
\item $\{i(s+\tau)\;|\;\tau\in(0,\infty)\}$
\item $\{-i(s+\tau)\;|\;\tau\in(0,\infty)\}$
\item $\{\tau\;|\;\tau\in(0,1)\}$
\item $\{-\tau\;|\;\tau\in(0,1)\}$
\end{itemize}
are all fixed by $z\mapsto\overline{z}$ and $z\mapsto-\overline{z}$, they must be vertical geodesics in the Euclidean metric for $A_{\xi}$. By \cite[p.~16]{ahlfors}, the modulus $Mod(A_{\xi})$ is equal to the sum of the moduli of the four quadrilateral pieces obtained after cutting along these vertical geodesics. Since these four quadrilateral pieces are each congruent to $Q_{X}$, we have $Mod(A_{\xi})=4Mod(Q_{X})$.}\end{proof}

For brevity, let $Q_{t}:=Q_{Z_{t}}$ (see Figure \ref{xiQ}).

\begin{figure}[h]
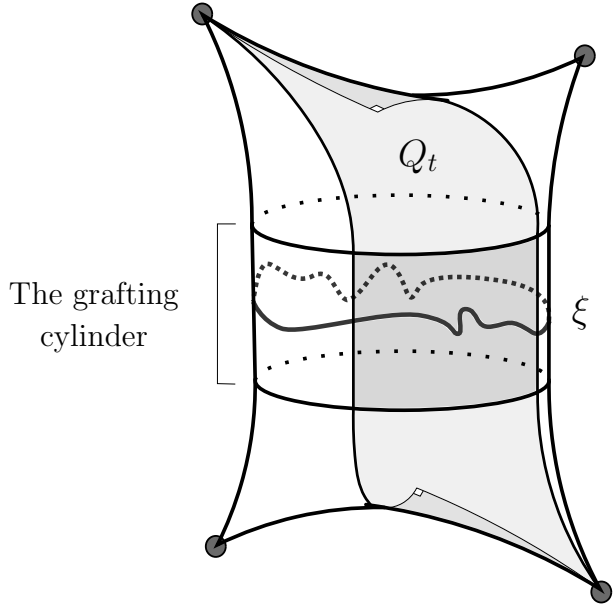

	\centering
	\begin{lpic}[clean]{xiQsend(,8cm)}
	\Large
	\lbl[]{120,90;$\xi$}
		\lbl[]{70,138;$Q_{t}$}
	\large
	\lbl[]{-30,95;The grafting}
	\lbl[]{-30,82;cylinder}
	\end{lpic}
	\caption{The quadrilateral $Q_{t}\subset Z_{t}$ and the curve $\xi$}
	\label{xiQ}
\end{figure}

In Lemma \ref{proj coord Q}, we present the natural projective structure on $Q_{t}$, given by the projective grafting description of the projective structure $\mathcal{Z}_{t}$ on $Z_{t}$. Namely, we choose a coordinate so that $\xi$ has holonomy $z\mapsto e^{\ell(\xi,\Gamma_{t})}z$. Let $D(z,r)$ be the open disk in $\C$ centered at $z$ of radius $r$, let $E^{c}$ indicate the complement of a set $E\subset\C$, and let $L(t):=\frac{1}{4}\ell(\xi,\Gamma_{t})$. The proof of Lemma \ref{proj coord Q} is a straightforward interpretation of projective grafting, so we omit it.

\begin{lemma}{The projective structure on the quadrilateral $Q_{t}$ is given by the region (see Figure \ref{Qt})
$$Q_{t}\cong\overline{D(c_{1},r_{1})}\cap\left(\bigcap\limits_{i=2}\limits^{4} D(c_{i},r_{i})^{c}\right)$$
with `vertical' sides given by the two concentric arcs, such that:
\begin{enumerate}
\item{$(c_{1},r_{1})=\left(0,e^{L(t)}\right)$}
\item{$(c_{2},r_{2})=(0,1)$}
\item{$(c_{3},r_{3})=\left(e^{L(t)}\cosh(L(t)),
e^{L(t)}\sinh(L(t))\right)$}
\item{$(c_{4},r_{4})=\left(e^{i\left(\pi+\theta(t)\right)}\cosh(L(t)),
\sinh(L(t))\right)$}
\end{enumerate}}
\label{proj coord Q}
\end{lemma}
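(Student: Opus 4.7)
The strategy is to directly interpret the projective grafting description $\mathcal{Z}_t = Gr(\theta(t) \xi, Y_t)$ in the given normalization and compute the four bounding circles of $Q_t$.

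First, I would set up the developing map. With the holonomy of $\xi$ normalized to $T_\xi : z \mapsto e^{4L(t)} z$, the axis of $\xi$'s holonomy has endpoints $0$ and $\infty$ in $\CP^1$. Projective grafting along $\xi$ with weight $\theta(t)$ inserts a projective annulus into $Y_t$ along $\xi$; a lift of this annulus in the universal cover of $Z_t$ develops to an angular sector of width $\theta(t)$ in $\C^*$ about the axis $(0,\infty)$, and the two hyperbolic components $D_1, D_2$ of $Y_t \setminus \xi$ develop to round Fuchsian half-planes attached at the two radial boundaries of this sector. Normalize so that $D_1$ develops into the upper half-plane, placing the relevant puncture at $z = 1$ on the ideal boundary.

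Second, I would identify the vertical sides of $Q_t$ using the rhombic symmetry. By Lemma \ref{char rhomb} and Lemma \ref{sym qf}, $Y_t$ is $\{\xi, \eta\}$-rhombic, with four rhombic reflections whose axes are hyperbolic geodesics perpendicular to $\xi$. The order-$4$ symmetry $\Psi$ cyclically permutes these four perpendiculars, which therefore meet $\xi$ at four equidistributed points along the hyperbolic length $4L(t)$. In the developing image, the perpendicular geodesics become the concentric circles $|z| = e^{jL(t)}$ for $j = 0, 1, 2, 3$. The quadrilateral $Q_t$, a quarter of $Z_t$, is bounded on its ``vertical'' sides by two consecutive such perpendiculars, giving $(c_1, r_1) = (0, e^{L(t)})$ and $(c_2, r_2) = (0, 1)$.

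Third, I would identify the horizontal sides. These arcs are geodesic rays (fixed by a rhombic reflection) joining a fixed point of the hyperelliptic involution $\tilde\Psi^2$ to a puncture, which is a cusp of the hyperbolic structure. In the developing image, such a geodesic lies on a circle tangent to the corresponding concentric circle at the puncture's image (a parabolic fixed point of the holonomy) and tangent to the $T_\xi$-translate of the concentric circle at the puncture's translate. For circle $3$, tangency to $|z| = 1$ at $z = 1$ and to $|z| = e^{2L(t)}$ at $z = e^{2L(t)}$ forces the center onto the positive real axis and yields $(c_3, r_3) = (e^{L(t)} \cosh L(t),\, e^{L(t)} \sinh L(t))$. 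For circle $4$, the corresponding horizontal side lies principally in $D_2$'s developing region, which is rotated relative to $D_1$'s by $\pi + \theta(t)$: a $\pi$-rotation from the hyperelliptic involution $\tilde\Psi^2 : z \mapsto -z$ combined with a $\theta(t)$ twist from the projective grafting. The analogous tangency analysis in the rotated coordinates gives $(c_4, r_4) = (e^{i(\pi + \theta(t))} \cosh L(t),\, \sinh L(t))$.

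The principal obstacle is bookkeeping the three rigid motions acting on the developing image: the rhombic isometries (which pin down the concentric circles), the hyperelliptic involution $\tilde\Psi^2$ (which swaps $D_1$ and $D_2$ and in the chosen coordinates acts as $z \mapsto -z$), and the grafting twist $\theta(t)$ (which rotates $D_2$'s developing region relative to $D_1$'s). Once the combined rotational offset of $\pi + \theta(t)$ between $D_1$ and $D_2$ is correctly identified and the tangency behavior of cusp-terminated geodesics is used, each circle's formula reduces to a direct computation.
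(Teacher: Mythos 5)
The paper itself does not prove this lemma; it states only that the result ``is a straightforward interpretation of projective grafting'' and omits the argument, so there is nothing to compare against directly. Your strategy is the right one: normalize the holonomy of $\xi$ to $z\mapsto e^{4L(t)}z$, use the anti-conformal symmetries fixing the cutting arcs to identify the two concentric circles, and use tangency at the parabolic fixed points to pin down the remaining two circles. The computation of $(c_1,r_1)$ and $(c_2,r_2)$ and the final formulas you write for $(c_3,r_3)$ and $(c_4,r_4)$ all agree with the lemma.

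However, the stated justification for $C_3$ and $C_4$ has a gap. You claim $C_3$ is tangent to ``the $T_\xi$-translate of the concentric circle at the puncture's translate,'' attributing the second tangency at $z=e^{2L(t)}$ to the holonomy of $\xi$. But $T_\xi(1)=e^{4L(t)}$, not $e^{2L(t)}$; applying your stated rule would give the circle $\left(e^{2L}\cosh 2L,\ e^{2L}\sinh 2L\right)$ rather than $\left(e^{L}\cosh L,\ e^{L}\sinh L\right)$. The point $e^{2L(t)}$ is in fact the developed image of the \emph{other} cusp of $D_1$ (the puncture $-1$ in the rhombic coordinate), obtained by applying a lift of the hyperelliptic involution $z\mapsto -z$ — which is not in the holonomy group — to the parabolic point at $z=1$. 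The same issue is worse for $C_4$: a literal ``analogous tangency analysis in the rotated coordinates'' produces a circle tangent to $|z|=1$ and $|z|=e^{2L}$ at angle $\pi+\theta$, i.e.~$\left(e^{L+i(\pi+\theta)}\cosh L,\ e^{L}\sinh L\right)$, which is off from the correct answer by a factor of $e^{L}$. The correct $C_4$ is tangent to $|z|=e^{-L}$ and $|z|=e^{L}$, because the cusp of $D_2$ develops to $e^{L+i(\pi+\theta)}$ (on the \emph{other} concentric circle) and its hyperelliptic conjugate lies at $e^{-L+i(\pi+\theta)}$. A cleaner derivation that avoids locating these hyperelliptic conjugates: the quadrilateral $Q_t$ has a right angle at each of the two Weierstrass vertices (four copies of $Q_t$ tile a neighborhood of each under the Klein $4$-group), so $C_3$ must meet $|z|=e^{L}$ perpendicularly and $C_4$ must meet $|z|=1$ perpendicularly. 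Together with tangency to the adjacent concentric circle at the respective parabolic fixed point, this determines each of $C_3$, $C_4$ uniquely and yields the stated formulas.
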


\begin{figure}[h]
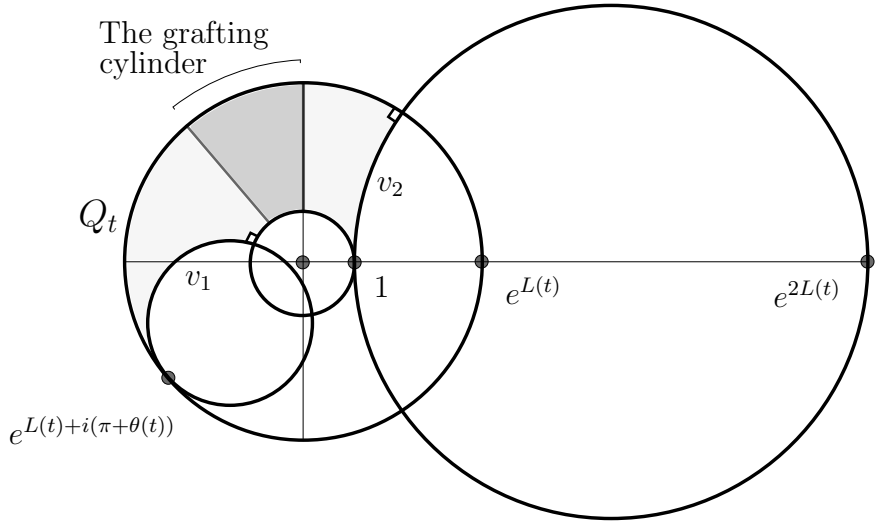

	\centering
	\begin{lpic}[clean]{Dcrsend(10cm)}
		\large
					
					\lbl[]{85,77;$1$}
					\lbl[]{135,76;$e^{L(t)}$}
					\lbl[]{225,75;$e^{2L(t)}$}
					\lbl[]{-10,30;$e^{L(t)+i(\pi+\theta(t))}$}
									\lbl[]{20,160;The grafting}
									\lbl[]{10,150;cylinder}
											\lbl[]{25,79;$v_{1}$}
											\lbl[]{88,110;$v_{2}$}
				\Large
					\lbl[]{-8,100;$Q_{t}$}
					
	\end{lpic}
	\caption{A picture of the quasi-Fuchsian structure on $Q_{t}$. The vertical sides are labelled $v_{1}$ and $v_{2}$.}
	\label{Qt}
\end{figure}

We now have a function, $t\mapsto Mod(Q_{t})$, for $t\in(t_{0},1]$, whose non-monotonicity would suffice to show the non-injectivity of $t\mapsto Z_{t}$. As a result of Lemma \ref{proj coord Q}, it is possible to produce numerical estimates of the modulus that confirm non-monotonicity: One may use conformal maps to `open' the two punctures to right angles, and a carefully scaled logarithm to make the quadrilateral look `nearly rectangular'. These produce the rough estimates:
\begin{itemize}
\item $Mod\left(Q_{1}\right)\approx2.3$
\item $Mod\left(Q_{.52}\right)\approx1.8$
\item $Mod\left(Q_{.39}\right)\approx2.0$
\end{itemize}
(Recall that $t_{0}=\frac{1}{2}\left(5+3\sqrt{3}-\sqrt{44+26\sqrt{3}}\right)$, and note that $.39>t_{0}$). Unfortunately, controlling the error in these approximations quickly becomes very delicate, so we pursue a different approach. In Lemma \ref{coord Q F}, we apply a normalizing transformation to the conformal structure given in Lemma \ref{proj coord Q} which allows a direct comparison of moduli of quadrilaterals. 

While there remain some involved calculations, this method reduces the computational difficulties considerably. After Lemma \ref{coord Q F}, we have a reasonable list of verifications to check, involving integers and simple functions. We defer this list of verifications to \S\ref{compute} and to the Appendix, as they contribute no new ideas to the proof. 

Choose a branch of the logarithm $\arg(z)\in(0,2\pi)$. Applying $z\mapsto\log z$ takes $Q_{t}$ into a vertical strip, with its vertical sides sent into a pair of vertical lines. Its horizontal sides can be described as the graphs of two explicit functions over the interval between the vertical sides. Following this map by $z\mapsto\frac{1}{L(t)}(z-\frac{\pi+\theta(t)}{2}i)$ we arrive at $R_{t}$, a certain conformal presentation of $Q_{t}$ (see Figure \ref{Qtnorm}). The proof of the following lemma is a straightforward calculation. 

\begin{figure}[h]
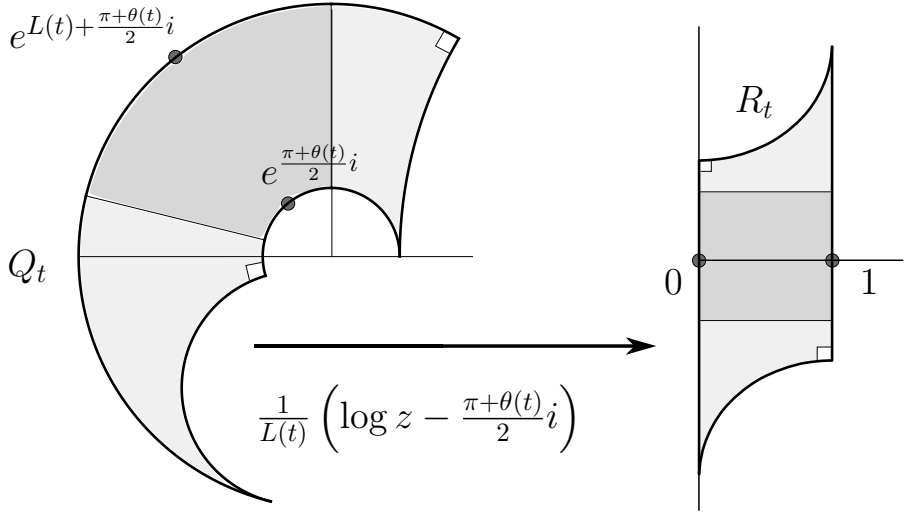

	\centering
	\begin{lpic}[clean]{Qtmapsend(11cm)}
		\Large
		\lbl[]{110,30;$\frac{1}{L(t)}\left(\log z-\frac{\pi+\theta(t)}{2}i\right)$}
		\lbl[]{192,74;$0$}
				\lbl[]{218,132;$R_{t}$}
		\lbl[]{255,74;$1$}
		\lbl[]{5,156;$e^{L(t)+\frac{\pi+\theta(t)}{2}i}$}
		\lbl[]{75,112;$e^{\frac{\pi+\theta(t)}{2}i}$}
		\lbl[]{-16,79;$Q_{t}$}
	\end{lpic}
	\caption{The normalizing transformation $Q_{t}\to R_{t}$}
	\label{Qtnorm}
\end{figure}

\begin{lemma}{The image under the transformation $z\mapsto\frac{1}{L(t)}\left(\log z-\frac{\pi+\theta(t)}{2}i\right)$ of $Q_{t}$ is given by $R_{t}$ (see Figure \ref{Qtnorm}). The quadrilateral $R_{t}$ has vertical sides in the vertical lines $\{\Re z=0\}$, $\{\Re z=1\}$, and horizontal sides given as the graphs of the functions $x\mapsto F(x,t)$ and $x\mapsto-F(1-x,t)$ over $x\in[0,1]$. The function $F(x,t)$ is given by:
\begin{equation}
\label{F}
F(x,t)=\alpha(t)-\beta(x,t)
\end{equation}
where $\alpha$ and $\beta$ are given by:
\begin{equation}
\label{alpha}
\alpha(t)= \frac{\pi+\theta(t)}{2L(t)}
\end{equation}
\begin{equation}
\label{beta}
\beta(x,t)=\frac{1}{L(t)}\;\cos^{-1}\left(\frac{\cosh\;(xL(t))}{\cosh\;(L(t))}\right)
\end{equation}}
\label{coord Q F}
\end{lemma}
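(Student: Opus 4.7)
The lemma amounts to a direct calculation in two stages.

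First I would observe that $z \mapsto \log z$ (with the chosen branch) carries the two concentric arcs of $Q_t$---those on $|z| = 1$ and $|z| = e^{L(t)}$---to vertical segments in the lines $\{\Re w = 0\}$ and $\{\Re w = L(t)\}$. The affine rescaling $w \mapsto (w - i(\pi + \theta(t))/2)/L(t)$ then sends these to the lines $\{\Re z = 0\}$ and $\{\Re z = 1\}$, providing the vertical sides of $R_t$.

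The substantive step is to identify the images of the two arcs coming from circles 3 and 4 of Lemma \ref{proj coord Q}. Writing $z = re^{i\phi}$ and expanding $|z - c_j|^2 = r_j^2$ in polar form yields, after a short simplification using the identities $|c_3| \pm r_3 = e^{L(t)} \cdot e^{\pm L(t)}$ and $|c_4| \pm r_4 = e^{\pm L(t)}$:
\begin{align*}
\text{Circle 3:} \quad & \cos \phi \;=\; \frac{\cosh(\log r - L(t))}{\cosh L(t)}, \\
\text{Circle 4:} \quad & \cos\bigl(\phi - (\pi + \theta(t))\bigr) \;=\; \frac{\cosh(\log r)}{\cosh L(t)}.
\end{align*}
Solving for $\phi$ on the appropriate branch of $\cos^{-1}$ and substituting $x = \log r / L(t)$, then applying the normalizing transformation, the arc of circle 4 bounding $Q_t$ gives the graph $y = \alpha(t) - \beta(x, t) = F(x, t)$ over $x \in [0, 1]$. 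An analogous computation for circle 3 gives the graph $y = -F(1-x, t)$.

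The main obstacle is bookkeeping: for each of the two non-concentric circles, the arc in $\partial Q_t$ is only one ``half'' of its intersection with the annulus $\{1 \le |z| \le e^{L(t)}\}$, and one must identify the correct branch of $\cos^{-1}$ and the correct segment of the arc. To handle this cleanly, I would appeal to the M\"{o}bius symmetry $z \mapsto e^{L(t) + i(\pi + \theta(t))}/z$, which exchanges the two circles of Lemma \ref{proj coord Q} (3)--(4) as well as the pair of concentric boundary circles. Under the normalizing transformation it becomes the central half-turn $(x, y) \mapsto (1-x, -y)$ of $R_t$, which carries the graph of $F(\cdot, t)$ to that of $-F(1 - \cdot, t)$, and so reduces the computation to verifying only one of the two horizontal arcs, with the other following by symmetry.
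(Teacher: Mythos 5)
Your calculation is correct, and since the paper explicitly omits this proof as ``a straightforward calculation,'' yours is precisely the expected argument: pass to polar coordinates, use $|c_3|\pm r_3 = e^{2L(t)},\,1$ and $|c_4|\pm r_4 = e^{\pm L(t)}$ to rewrite each circle's defining equation in terms of $\cosh$, then substitute $x=\log r/L(t)$ and $y=(\phi-(\pi+\theta)/2)/L(t)$. For circle 4 this gives $y=\alpha(t)\pm\beta(x,t)$, with the minus sign selected because $Q_t$ lies outside $D(c_4,r_4)$ and the center $c_4$ maps to height $\alpha(t)$; for circle 3, since $\log r - L(t)=(x-1)L(t)$ and $\cosh$ is even, one gets $y=\pm\beta(1-x,t)-\alpha(t)$, giving $-F(1-x,t)$. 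The M\"obius involution $z\mapsto e^{L(t)+i(\pi+\theta(t))}/z$ you invoke does exchange circles 1 with 2 and 3 with 4 (one checks $|c_4|^2-r_4^2=1$ and $|c_3|^2-r_3^2=e^{2L(t)}$ so the inversion sends circle 4 to a circle centered at $\bar c_4$ with radius $r_4$, then scaling by $e^{L(t)+i(\pi+\theta(t))}$ carries it to circle 3), and it does become $(x,y)\mapsto(1-x,-y)$ under the normalizing map; this is a clean way to settle the branch and sign bookkeeping for the second arc once the first is done, and is a worthwhile observation even though the direct computation for circle 3 is also short.
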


The essentially useful fact in Lemma \ref{coord Q F} is that $Mod(Q_{t})$ now depends only on the graph of the function $x\mapsto F(x,t)$ over $x\in[0,1]$, which forms a kind of `profile' for the quadrilateral $Q_{t}$. There is some geometric intuition to this profile (cf. ~\cite[p.~21]{mcmullen2}): In the cover $\widetilde{Z_{t}}$ of $Z_{t}$, corresponding to the subgroup $\langle\xi\rangle\;\textless\;\pi_{1}(\Sigma)$, one has 
$$Ext_{\xi}\widetilde{Z_{t}}=\frac{4L(t)}{\pi+\theta(t)}=\frac{2}{\alpha(t)}.$$
Since the maximal modulus annulus with core curve homotopic to $\xi$ can be lifted to $\widetilde{Z_{t}}$, one has, via Rengel's Inequality, 
$$Ext_{\xi}Z_{t}\le Ext_{\xi}\widetilde{Z_{t}}.$$
In other words,
\begin{equation}
\label{mod ineq}
Mod(Q_{t})\;\le\;\frac{1}{2\alpha(t)}.
\end{equation}

Note that $F(x,t)$ depends on $x$ only through $\beta$, as $\alpha$ is independent of $x$. In fact, $\frac{1}{2\alpha(t)}$ is the modulus of the rectangle sharing its vertical sides with $Q_{t}$ and its horizontal sides contained in the lines $\{\Im z=\alpha(t)\}$ and $\{\Im z=-\alpha(t)\}$. Thus $\beta$ accounts for the discrepancy in inequality (\ref{mod ineq}).

Some computational details, deferred to \S\ref{compute}, provide the following:

\begin{lemma}{We have the containment of quadrilaterals $R_{1}\subset R_{\frac{1}{2}}$, with the vertical sides of $R_{1}$ contained in the vertical sides of $R_{\frac{1}{2}}$.}
\label{R 1}
\end{lemma}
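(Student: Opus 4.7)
The region $R_t$ is the closed set $\{(x,y) : 0 \le x \le 1,\; -F(1-x,t) \le y \le F(x,t)\}$, and both $R_1$ and $R_{1/2}$ have their vertical sides contained in the fixed lines $\{\Re z = 0\}$ and $\{\Re z = 1\}$. Hence the lemma is equivalent to the single pointwise inequality
$$F(x,1) \le F(x,1/2) \quad \text{for every } x \in [0,1],$$
because the bottom-curve inequality $-F(1-x,1) \ge -F(1-x,1/2)$ follows by substituting $x \mapsto 1-x$, and the containment of the vertical segments at $x = 0$ and $x = 1$ is exactly the endpoint cases $F(0,1) \le F(0,1/2)$ and $F(1,1) = \alpha(1) \le \alpha(1/2) = F(1,1/2)$ (together with the corresponding endpoints of the bottom graph, which are the same two inequalities). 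So the plan is to reduce the lemma to this one pointwise comparison on $[0,1]$.

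To set up the comparison I would record the constants $L(t)$ and $\theta(t)$ explicitly at the two parameter values. At $t = 1$, Lemma \ref{param check} tells us that $\hat{\Gamma}_1$ has totally geodesic boundary, so $\theta(1) = 0$ by (\ref{theta}) and $L(1) = \tfrac{1}{2}\cosh^{-1}(7/2)$ by (\ref{ell}); in particular $\alpha(1) = \pi/(2L(1))$ and
$$F(x,1) = \frac{1}{L(1)}\sin^{-1}\!\left(\frac{\cosh(xL(1))}{\cosh(L(1))}\right).$$
At $t = 1/2$, equations (\ref{theta}) and (\ref{ell}) give rational arguments to $\cos^{-1}$ and $\cosh^{-1}$ and produce explicit real numbers $L(1/2)$ and $\theta(1/2) \in (0,\pi)$, and therefore an explicit closed-form $F(x,1/2)$.

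The pointwise inequality would then be organized into two pieces. First, the endpoint checks $F(1,1) \le F(1,1/2)$ and $F(0,1) \le F(0,1/2)$ are both explicit comparisons of two real numbers built out of $L(1), L(1/2), \theta(1/2)$ and a single value of $\cos^{-1}$; these give the vertical-side containment. Second, the interior comparison on $(0,1)$: since $\beta(\cdot,t)$ is decreasing and convex in $x$ (as $\cosh(xL(t))/\cosh(L(t))$ is increasing and convex, and $\cos^{-1}$ is decreasing and concave on $(0,1)$), each profile $x \mapsto F(x,t)$ is increasing and concave. This concavity, together with the two endpoint comparisons, reduces the interior comparison to controlling $F_x(x, 1/2) - F_x(x, 1)$ near the endpoints, which in turn reduces to an explicit inequality in $\sinh(xL)/\sqrt{\cosh^2 L - \cosh^2 xL}$ evaluated at $t = 1$ versus $t = 1/2$.

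The conceptual work is the reduction to $F(x,1) \le F(x,1/2)$ and the observation that the profiles are monotone concave; the main obstacle is not conceptual but computational bookkeeping, namely verifying the resulting real-number inequalities from the explicit formulas for $L(1), L(1/2), \theta(1/2)$ and the derivative estimate. Because these contribute no new ideas, they are exactly the verifications the author defers to \S\ref{compute} and the Appendix.
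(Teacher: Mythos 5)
Your reduction to the single pointwise inequality $F(x,1)\le F(x,\tfrac12)$ on $[0,1]$ is exactly right, and matches the paper's setup. But your proposed method of establishing that inequality goes off in a different direction than the paper, and it contains a genuine gap.

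First, a sign error: you claim $\beta(\cdot,t)$ is decreasing and convex, hence $F(\cdot,t)$ is increasing and concave. But $\cos^{-1}$ is decreasing and \emph{concave} on $(0,1)$, and the composition of a decreasing concave function with an increasing convex function is decreasing and \emph{concave}, not convex. One can check directly that
\[
\frac{\partial^2\beta}{\partial x^2}(x,t)=-\frac{L(t)\cosh(xL(t))\sinh^2L(t)}{\bigl(\cosh^2L(t)-\cosh^2(xL(t))\bigr)^{3/2}}<0,
\]
so $\beta(\cdot,t)$ is concave and $F(\cdot,t)=\alpha(t)-\beta(\cdot,t)$ is \emph{convex}. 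Second, even with the correct sign, the planned structure (endpoint checks plus derivative control ``near the endpoints'') does not close: the difference of two convex functions can change sign in the interior while being positive at both endpoints, so concavity/convexity alone does not reduce the pointwise comparison to boundary data, and the sketched derivative estimate is not a complete argument.

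The paper's proof is both shorter and avoids this entirely. Writing $F(x,\tfrac12)-F(x,1)=\bigl(\alpha(\tfrac12)-\alpha(1)\bigr)+\bigl(\beta(x,1)-\beta(x,\tfrac12)\bigr)$, each summand is nonnegative: the first by the explicit estimate of Lemma~\ref{alpha 1}, and the second because $L$ is strictly decreasing in $t$ (Lemma~\ref{L 1to1}), which forces $\beta(x,\cdot)$ to be increasing in $t$ (Lemma~\ref{beta all x}). No concavity in $x$ and no endpoint-plus-derivative argument is needed. Your observation that $\theta(1)=0$ and $F(x,1)=\tfrac1{L(1)}\sin^{-1}\bigl(\cosh(xL(1))/\cosh L(1)\bigr)$ is correct but ends up unnecessary.
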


\begin{lemma}{We have the containment of quadrilaterals $R_{\frac{2}{5}}\subset R_{\frac{1}{2}}$, with the vertical sides of $R_{\frac{2}{5}}$ contained in the vertical sides of $R_{\frac{1}{2}}$.}
\label{R 2/5}
\end{lemma}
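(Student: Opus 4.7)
The plan is to reduce the containment of quadrilaterals to a single pointwise inequality between their graph-like horizontal boundaries, and then to verify this inequality by combining the explicit formulas in Lemma \ref{coord Q F} with elementary estimates, much as in the proof of Lemma \ref{R 1}.

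First, I would observe that by Lemma \ref{coord Q F} both $R_{2/5}$ and $R_{1/2}$ have vertical sides contained in the two lines $\{\Re z = 0\}$ and $\{\Re z = 1\}$, and both are symmetric under the involution $(x,y)\mapsto(1-x,-y)$, since the lower horizontal boundary is the image of the upper boundary under this map. The containment in the statement (with vertical sides contained in vertical sides) is therefore equivalent to the single pointwise inequality
\begin{equation}
\label{Rinc25}
F(x, 2/5) \;\le\; F(x, 1/2) \quad \text{for all } x \in [0, 1],
\end{equation}
which by (\ref{F}) can be rewritten as
\[
\alpha(2/5) - \alpha(1/2) \;\le\; \beta(x, 2/5) - \beta(x, 1/2)
\qquad \text{for all } x \in [0, 1].
\]

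Since $\beta(1, t) = 0$ for all $t$, evaluating (\ref{Rinc25}) at $x = 1$ yields the necessary condition $\alpha(2/5) \le \alpha(1/2)$. I would verify this first, using the explicit formulas (\ref{theta}) and (\ref{ell}) to evaluate $\theta$ and $L$ at $t = 2/5$ and $t = 1/2$, and then comparing the values of $\alpha(t) = (\pi+\theta(t))/(2L(t))$. At the other endpoint $x=0$, the inequality (\ref{Rinc25}) becomes a comparison of two explicit closed-form numbers via $\beta(0,t) = \frac{1}{L(t)}\cos^{-1}(\sech L(t))$, and I would verify this next.

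Having pinned down the endpoints, it remains to propagate the inequality to all $x\in(0,1)$. To that end, I would study the function $h(x) := F(x, 1/2) - F(x, 2/5)$ on $[0,1]$; since $h(0), h(1) \ge 0$, it suffices to rule out a negative interior minimum. The cleanest route is to show that the map $t \mapsto \partial_x \beta(x, t)$ from (\ref{beta}) is sufficiently monotone, so that $h'(x)$ changes sign at most once on $[0,1]$; combined with the endpoint values this forces $h \ge 0$ throughout.

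I expect the main obstacle to be obtaining rigorous bounds on the transcendental expression $\beta(x,t)$ uniformly in $x$. Unlike the case $t=1$ treated in Lemma \ref{R 1}, where $\theta(1) = 0$ eliminates one term from $F$, at $t = 2/5$ both $\alpha$ and $\beta$ involve genuinely algebraic (rather than rational) quantities, and the comparison of $\cos^{-1}$ and $\cosh^{-1}$ expressions at two unrelated algebraic arguments must be done with care. Following the paper's convention, I would expect these verifications to collapse, after bookkeeping, into elementary estimates about integers and simple algebraic functions that the author defers to \S\ref{compute} and the Appendix.
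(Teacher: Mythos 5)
Your overall strategy---reducing the containment to the single pointwise inequality $F(x, \tfrac{2}{5}) \le F(x, \tfrac{1}{2})$ on $[0,1]$, checking the endpoints, and propagating by controlling the $x$-dependence of $\beta$---is exactly the decomposition the paper uses via Lemmas \ref{alpha beta 2/5} and \ref{beta all x}. The one genuine flaw is in the propagation step: the inference ``$h'$ changes sign at most once and $h(0),h(1)\ge 0$, therefore $h\ge 0$'' is false. If $h'$ switches from negative to positive, $h$ has an interior minimum which could still be negative (for instance $h(x)=(x-\tfrac12)^2-\tfrac1{10}$ satisfies your hypotheses but not your conclusion), so ``changes sign at most once'' does not rule out a negative interior minimum as you claim.

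What is actually needed, and what the paper proves as Lemma \ref{beta all x}, is the stronger statement that $\partial^2\beta/\partial x\partial t<0$, so for $t_2>t_1$ the quantity $\beta(x,t_2)-\beta(x,t_1)$ is strictly \emph{decreasing} in $x$ from a positive value at $x=0$ to $0$ at $x=1$. Consequently $h(x)=\bigl(\alpha(\tfrac12)-\alpha(\tfrac25)\bigr)-\bigl(\beta(x,\tfrac12)-\beta(x,\tfrac25)\bigr)$ is strictly increasing in $x$, so $h(x)\ge h(0)$ and only the $x=0$ endpoint verification (the paper's Lemma \ref{alpha beta 2/5}, which reduces to the algebraic inequalities deferred to the Appendix) is required; the $x=1$ check is redundant. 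Once you sharpen ``$h'$ changes sign at most once'' to ``$h'$ has constant positive sign, via the sign of the mixed partial,'' your argument matches the paper's.
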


These Lemmas directly imply non-monotonicity of $t\mapsto Mod(Q_{t})$.

\begin{prop}{The function $t\mapsto Mod\left(Q_{t}\right)$ is not monotone on the interval $(t_{0},1]$.}
\label{mod non-mono}
\end{prop}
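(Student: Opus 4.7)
The plan is to apply Lemmas \ref{R 1} and \ref{R 2/5} together with the monotonicity of the modulus of a quadrilateral under containment. Since the conformal map of Lemma \ref{coord Q F} takes $Q_t$ onto $R_t$, we have $Mod(Q_t) = Mod(R_t)$, and it suffices to compare the moduli of the $R_t$.

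The comparison principle I would use is standard: if $R \subset R'$ are quadrilaterals with a distinguished pair of opposite ``a-sides'', and the a-sides of $R$ are contained in those of $R'$, then every curve in $R$ joining the a-sides of $R$ is automatically a curve in $R'$ joining the a-sides of $R'$. Thus the curve family associated to $R$ is contained in that associated to $R'$, and monotonicity of extremal length under inclusion of curve families (\cite{ahlfors}) gives $Mod(R) \ge Mod(R')$, with strict inequality whenever the containment is two-dimensional. In our setting the a-sides of $R_t$ are its vertical sides on $\{\Re z = 0\}$ and $\{\Re z = 1\}$; these descend from the grafting cylinder sides $v_1, v_2$ of $Q_t$, across which curves realizing $Ext_\xi$ pass (cf.\ Lemma \ref{ext rhomb}).

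Applying this principle to the strict containments in Lemmas \ref{R 1} and \ref{R 2/5} yields $Mod(Q_1) > Mod(Q_{\frac{1}{2}})$ and $Mod(Q_{\frac{2}{5}}) > Mod(Q_{\frac{1}{2}})$. A direct estimate on the closed-form expression for $t_0$ in Lemma \ref{qf info} gives $t_0 < \frac{2}{5}$, so all three comparison points lie in the interval $(t_0, 1]$. A function monotone on $(t_0, 1]$ would give the values $Mod(Q_{\frac{2}{5}})$, $Mod(Q_{\frac{1}{2}})$, $Mod(Q_1)$ in monotone order at the increasing arguments $\frac{2}{5} < \frac{1}{2} < 1$, yet the two strict inequalities above show these values form a strict minimum at $\frac{1}{2}$. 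This contradicts monotonicity and proves the proposition. The real obstacle here is not the modulus comparison---which is routine---but the underlying profile inequalities $F(x,1) \le F(x,\frac{1}{2})$ and $F(x,\frac{2}{5}) \le F(x,\frac{1}{2})$ on $x \in [0,1]$ required by Lemmas \ref{R 1} and \ref{R 2/5}, whose verification is deferred to \S\ref{compute} and the Appendix.
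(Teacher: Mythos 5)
Your proof is correct and follows the same approach as the paper: reduce to $t_0 < \frac{2}{5}$ (Lemma \ref{t0}), then apply the quadrilateral containments of Lemmas \ref{R 1} and \ref{R 2/5} together with monotonicity of the modulus. What you add is a correct and explicit justification of the modulus comparison via inclusion of the curve families joining the $a$-sides, which the paper leaves to a citation of Ahlfors.

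One point worth flagging: your inequalities $Mod(Q_1) > Mod(Q_{\frac{1}{2}})$ and $Mod(Q_{\frac{2}{5}}) > Mod(Q_{\frac{1}{2}})$ (a strict interior \emph{minimum} at $t=\frac{1}{2}$) run opposite to the paper's printed $Mod(Q_1) \le Mod(Q_{\frac{1}{2}})$ and $Mod(Q_{\frac{2}{5}}) \le Mod(Q_{\frac{1}{2}})$. Your direction is the one forced by the extremal-length argument you give (containment of quadrilaterals with $a$-sides contained enlarges the admissible curve family, hence decreases the extremal length of the larger quadrilateral), and it agrees with the paper's own numerical estimates $Mod(Q_1)\approx 2.3$, $Mod(Q_{.52})\approx 1.8$, $Mod(Q_{.39})\approx 2.0$ recorded just before Lemma \ref{coord Q F}. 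The printed inequality signs in the paper's proof (and likewise in inequality (\ref{mod ineq})) appear to be reversed; this is immaterial for the conclusion, since either a strict interior maximum or a strict interior minimum rules out monotonicity, but your version is the accurate one.
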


\begin{proof}{Note that $t_{0}<\frac{2}{5}$, by Lemma \ref{t0}. By Lemmas \ref{R 1} and \ref{R 2/5} (see Figure \ref{Qtnonmono}), we have:
\begin{align*}
Mod(Q_{1}) & \le Mod\left(Q_{\frac{1}{2}}\right)\\
Mod\left(Q_{\frac{2}{5}}\right) & \le Mod\left(Q_{\frac{1}{2}}\right)
\end{align*}}\end{proof}
 
\begin{figure}[h]
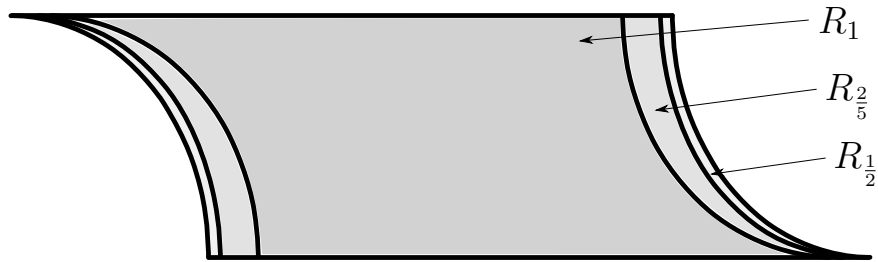

	\centering
	\begin{lpic}[clean]{Qtnonmonosend(11.5cm)}
				\Large
				\lbl[]{150,17;$R_{\frac{1}{2}}$}
				\lbl[]{148.5,29;$R_{\frac{2}{5}}$}
				\lbl[]{147,42;$R_{1}$}
	\end{lpic}
	\caption{The non-monotonicity of $Mod\left(Q_{t}\right)$. The quadrilateral $R_{t}$ is pictured rotated so that its vertical sides appear horizontal.}
	\label{Qtnonmono}
\end{figure}

The non-monotonicity of Proposition \ref{mod non-mono} implies the existence of a critical point for the skinning map that is the subject of Theorem \ref{thm}, so we are now ready to give the proof.

\begin{proof}[{Proof of Theorem \ref{thm}}]
{The anti-holomorphic map $\sigma_{M}:\H\to\H$, by equation (\ref{image eqn}), sends the real 1-dimensional submanifold $\Rh:=\Rh_{\{\xi,\eta\}}$ to itself. By Lemmas \ref{param rhomb} and \ref{mod non-mono}, $\left(Ext_{\xi}\circ\sigma_{M}\right)|_{\Rh}$ is a non-monotonic continuously differentiable function, and thus $\left(Ext_{\xi}\circ\sigma_{M}\right)|_{\Rh}$ has a critical point. By Lemma \ref{param rhomb}, $Ext_{\xi}|_{\Rh}$ is a diffeomorphism, and thus $\sigma_{M}$ has a critical point.}
\end{proof}

\section{Computational Lemmas}
\label{compute}

The goal of this section is to prove Lemmas \ref{R 1} and \ref{R 2/5}, which together imply non-monotonicity of $Mod(Q_{t})$ (see Proposition \ref{mod non-mono}). Since the vertical sides of $R_{t}$ are contained in the lines $\{\Re z=0\}$ and $\{\Re z=1\}$, and the horizontal sides are graphs of functions over $x\in[0,1]$, the containments in these Lemmas are a consequence of inequalities involving the functions $x\mapsto F(x,t)$. In particular, we seek uniform estimates such as $F(x,t_{1})<F(x,t_{2})$, for $t_{1},t_{2}\in(t_{0},1]$, and for all $x\in[0,1]$. 

Essentially, it is the non-monotonicity of $\alpha(t)$ that accounts for such uniform estimates. However, while $\alpha$ is independent of $x$, $F$ is not, and in fact we may have $\alpha(t_{1})>\alpha(t_{2})$ while $F(x_{0},t_{1})<F(x_{0},t_{2})$, for some $x_{0}\in[0,1]$. We must therefore take more care in estimates on $F(x,t_{1})-F(x,t_{2})$ by accounting for the effect of $\beta$. 

While $\beta$ depends on $x$, we can control
\[\displaystyle\max_{x\in[0,1]}|\beta(x,t_{1})-\beta(x,t_{2})|.\]
This allows comparisons between $F(x,t_{1})$ and $F(x,t_{2})$ over all $x$. Some computational pieces that are easily checked with a computer are collected in the Appendix, where a Mathematica notebook containing a demonstration of these verifications is also indicated.

For ease in presentation, we introduce some notation for functions that will be used throughout this section:
\vspace{.1cm}
\begin{align}
\label{p1}
p_{1}(t)&=1+8t^{2}+21t^{4}-2t^{6}\\
\label{p2}
p_{2}(t)&=2t^{2}(1+t^{2})^{2}\\
\label{p3}
p_{3}(t)&=4t^{5}(3-t^{2})
\end{align}
In terms of these polynomials, equations (\ref{theta}--\ref{ell}) become:
\begin{align} 
\label{angps}
\theta(t)&=\cos^{-1}\left(\frac{p_{3}(t)}{p_{2}(t)}\right)\\
\label{Lps}
L(t)&=\frac{1}{2}\cosh^{-1}\left(\frac{p_{1}(t)}{p_{2}(t)}\right)
\end{align}
And recall (\ref{alpha}--\ref{beta}):
\begin{align}
\label{a}
\alpha(t)&=\frac{\pi+\theta(t)}{2L(t)}\\
\label{b}
\beta(x,t)&=\frac{1}{L(t)}
\cos^{-1}\left(\frac{\cosh\left(x\;L(t)\right)}{\cosh\left(L(t)\right)}\right)
\end{align}

The interested reader can refer to equations (\ref{p1}--\ref{b}) to check the computations below. The computational strategy is to first turn inequalities involving the $\cos^{-1}$ and $\cosh^{-1}$ functions into inequalities involving logarithms and constants consisting of algebraic numbers and $\pi$. These statements are then reduced to explicit inequalities involving large powers of algebraic numbers, which can be checked with a computer. Indeed, for the patient reader, they could all be checked by hand.

\begin{lemma}{We have $\alpha(\frac{1}{2})>\alpha(1)$.}
\label{alpha 1}
\end{lemma}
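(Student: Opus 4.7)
The plan is to evaluate $\alpha$ explicitly at the two points, apply the crude rational lower bound $\cos^{-1}(11/25) > \pi/3$ to remove the inverse cosine, and then reduce the statement to an algebraic inequality in $\Z[\sqrt{5},\sqrt{14}]$ that can be verified by hand.

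From (\ref{p1}--\ref{p3}) and (\ref{angps}--\ref{Lps}), direct substitution yields $\theta(1)=0$, $2L(1) = \cosh^{-1}(7/2)$, $\theta(1/2) = \cos^{-1}(11/25)$, and $2L(1/2) = \cosh^{-1}(137/25)$. Thus by (\ref{a}), the claim $\alpha(1/2) > \alpha(1)$ is equivalent to
\[
\bigl(\pi + \cos^{-1}(11/25)\bigr)\,\cosh^{-1}(7/2) \;>\; \pi\,\cosh^{-1}(137/25).
\]
Since $11/25 < 1/2 = \cos(\pi/3)$ and $\cos^{-1}$ is monotone decreasing, $\cos^{-1}(11/25) > \pi/3$, so it suffices to prove $4\,\cosh^{-1}(7/2) > 3\,\cosh^{-1}(137/25)$.

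Using $\cosh^{-1}(u) = \ln\bigl(u + \sqrt{u^2-1}\bigr)$ together with the simplifications $\sqrt{(7/2)^2 - 1} = 3\sqrt{5}/2$ and $\sqrt{(137/25)^2 - 1} = 36\sqrt{14}/25$, exponentiating converts this to the purely algebraic inequality
\[
5^6 \cdot (7+3\sqrt{5})^4 \;>\; 16\cdot(137 + 36\sqrt{14})^3.
\]
Expanding both sides by the binomial theorem puts the left side in $\Z[\sqrt{5}]$ and the right side in $\Z[\sqrt{14}]$, reducing the problem to an inequality of the shape $A + B\sqrt{5} > C\sqrt{14}$ for explicit positive integers $A, B, C$ of order $10^8$.

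The final step is a verification by rational bounds: $\sqrt{5} > 2236/1000$ (valid since $2236^2 < 5 \cdot 10^6$) and $\sqrt{14} < 3742/1000$ (valid since $3742^2 > 14 \cdot 10^6$) reduce the question to a comparison of two explicit integers, and numerical evidence shows that the inequality holds with substantial margin. The only real obstacle is the bookkeeping of expanding $(137 + 36\sqrt{14})^3$ and carrying out the final multiplications, which is easily managed by hand or by computer algebra. Should the slack from the bound $\cos^{-1}(11/25) > \pi/3$ prove insufficient in later applications of the same method, it can be sharpened by comparing to $\cos$ at a finer rational angle; here the given bound is comfortable.
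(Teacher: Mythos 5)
Your proposal is correct and follows the same overall strategy as the paper: evaluate $\alpha$ at $1$ and $\tfrac{1}{2}$, replace $\cos^{-1}(11/25)$ by a rational-multiple-of-$\pi$ lower bound, and then exponentiate to convert the resulting comparison of $\cosh^{-1}$-values into an inequality of algebraic numbers. The difference is in the choice of bound. The paper uses $\cos^{-1}(11/25) > \tfrac{7\pi}{20}$ (its Lemma \ref{A1}), which leads to the exponent-$27$-vs-$20$ inequality of Lemma \ref{A5}. You instead take the cruder $\cos^{-1}(11/25) > \tfrac{\pi}{3}$ (immediate from $11/25<1/2$), which after exponentiation reduces to $5^{6}(7+3\sqrt{5})^{4} > 16(137+36\sqrt{14})^{3}$ --- a far smaller computation, verifiable by hand. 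Both work here, since $\tfrac{\cosh^{-1}(137/25)}{\cosh^{-1}(7/2)} \approx 1.24$ sits comfortably below both $\tfrac{4}{3}$ and $\tfrac{27}{20}$. The paper's tighter $\tfrac{7\pi}{20}$ bound is not gratuitous, though: it is re-used in the more delicate Lemma \ref{alpha beta 2/5}, where the slack is smaller and $\tfrac{\pi}{3}$ would not suffice. So your simplification is valid for this lemma in isolation but doesn't replace Lemma \ref{A1} wholesale.
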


\begin{proof}{Calculating $\alpha(1)$ and $\alpha\left(\frac{1}{2}\right)$, and estimating $\cos^{-1}\left(\frac{11}{25}\right)$ using Lemma \ref{A1}, we have:
\begin{align*}
\alpha(1)&=\frac{\pi}{\cosh^{-1}\left(\frac{7}{2}\right)}\\
\alpha\left(\frac{1}{2}\right)&=\frac{\pi+\cos^{-1}\left(\frac{11}{25}\right)}{\cosh^{-1}\left(\frac{137}{25}\right)}>
\frac{\frac{27\pi}{20}}{\cosh^{-1}\left(\frac{137}{25}\right)}
\end{align*}
Now referring to Lemma \ref{A5},
\begin{align*}
\left(\frac{7+3\sqrt{5}}{2}\right)^{27}&>\left(\frac{137+36\sqrt{14}}{25}\right)^{20}.
\end{align*}
Taking logarithms and re-arranging, we find
\begin{align*}
\frac{\frac{27}{20}}{\log\left(\frac{137+36\sqrt{14}}{25}\right)} &> \frac{1}{\log\left(\frac{7+3\sqrt{5}}{2}\right)}.
\end{align*}
Since, for $x\ge1$, $\cosh^{-1}x=\log\left(x+\sqrt{x^{2}-1}\right)$, we have
\[\frac{\frac{27}{20}}{\cosh^{-1}\left(\frac{137}{25}\right)}>\frac{1}{\cosh^{-1}\left(\frac{7}{2}\right)}.\]
This implies $\alpha\left(\frac{1}{2}\right)>\alpha(1)$, as desired.
}
\end{proof}
The proof of Lemma \ref{alpha beta 2/5} proceeds along similar lines, so we suppress commentary.

\begin{lemma}{We have $\alpha(\frac{1}{2})-\alpha(\frac{2}{5})>\beta(0,\frac{1}{2})-\beta(0,\frac{2}{5})$}
\label{alpha beta 2/5}
\end{lemma}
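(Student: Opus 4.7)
\textbf{Proof plan for Lemma~\ref{alpha beta 2/5}.} The plan is to follow the template of Lemma~\ref{alpha 1}: obtain closed-form expressions for both sides in terms of inverse trigonometric/hyperbolic functions evaluated at explicit rationals, then reduce the inequality to an algebraic statement that can be checked alongside the verifications in the Appendix.

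The first step is to evaluate the quantities at $t = \tfrac{1}{2}$ and $t = \tfrac{2}{5}$ in closed form. From (\ref{angps}) we get $\theta(\tfrac{1}{2}) = \cos^{-1}(11/25)$ and, directly from the polynomials $p_2, p_3$, we get $\theta(\tfrac{2}{5}) = \cos^{-1}(1136/4205)$. For $\beta(0, t) = L(t)^{-1}\cos^{-1}(\sech L(t))$, I would use the half-angle identity $\cosh(2L) = 2\cosh^{2}(L) - 1$ together with (\ref{Lps}) to get the clean expression
\[
\sech^{2}(L(t)) \;=\; \frac{2\, p_{2}(t)}{p_{1}(t) + p_{2}(t)}.
\]
Plugging in, the radicands become perfect squares: $\sech(L(\tfrac{1}{2})) = 5/9$ and $\sech(L(\tfrac{2}{5})) = 116/225$. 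Combined with the values $p_{1}/p_{2}$ giving $\cosh^{-1}(137/25)$ and $\cosh^{-1}(43897/6728)$ for $2L$, the target inequality becomes
\[
\frac{\pi + \cos^{-1}(11/25) - 2\cos^{-1}(5/9)}{\cosh^{-1}(137/25)} \;>\; \frac{\pi + \cos^{-1}(1136/4205) - 2\cos^{-1}(116/225)}{\cosh^{-1}(43897/6728)}.
\]

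The second step is to bound each of the six inverse-function values by simple algebraic quantities and cross-multiply, reducing to a power inequality among algebraic numbers in the spirit of Lemma~\ref{A5}. For the hyperbolic arccosines I would use $\cosh^{-1}(x) = \log(x + \sqrt{x^2 - 1})$, checking that $137^2 - 625 = (36)^2 \cdot 14$ so $\cosh^{-1}(137/25) = \log((137 + 36\sqrt{14})/25)$, and handling $\cosh^{-1}(43897/6728)$ identically. For the four arccosines I would obtain rational-multiple-of-$\pi$ upper and lower bounds, exactly as Lemma~\ref{A1} did for $\cos^{-1}(11/25)$. A lower bound for the LHS numerator, an upper bound for the RHS numerator, an upper bound for the LHS denominator, and a lower bound for the RHS denominator together transform the inequality into one of the form $(a + b\sqrt{c})^{m} > (d + e\sqrt{f})^{n}$ with explicit integer exponents, to be verified in the Appendix.

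The main obstacle will be quantitative tightness. A quick numerical check shows that both sides of the desired inequality are approximately $0.95$ with a gap of only about $0.03$, so each of the four $\cos^{-1}$ estimates must be sharp enough that the accumulated slack in the cross-multiplied power inequality does not consume the gap. Unlike Lemma~\ref{alpha 1}, which got by with a single coarse estimate on $\cos^{-1}(11/25)$, here four angle estimates must be balanced simultaneously. I would therefore select the rational approximations to each angle adaptively --- tightening those inequalities whose contributions appear in the weaker direction --- and only at the end fix the denominators of the rational multiples of $\pi$ so that the resulting single algebraic power comparison can be checked by the same kind of integer-arithmetic verification used for Lemma~\ref{A5}.
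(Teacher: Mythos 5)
Your plan matches the paper's proof essentially step for step: both evaluate $\theta$, $L$, and $\sech L$ at $t=\tfrac12,\tfrac25$ in closed rational form (including the identity $\sech^2 L(t) = 2p_2/(p_1+p_2)$ yielding $5/9$ and $116/225$), bound the four arccosines by rational multiples of $\pi$ (the paper's Lemmas~\ref{A1}--\ref{A4}), and reduce the resulting two-fraction comparison to the single power inequality on $\bigl(\tfrac{137+36\sqrt{14}}{25}\bigr)$ and $\bigl(\tfrac{43897+225\sqrt{37169}}{6728}\bigr)$ with exponents $46$ and $43$ (Lemma~\ref{A7}). The only difference is cosmetic --- you regroup as $\alpha-\beta$ before bounding, the paper bounds term by term and regroups afterward --- and both produce the same appendix inequality. (Incidentally, the paper's displayed intermediate line contains a typo, writing $\cos^{-1}(2/3)$ and $\cosh^{-1}(7/2)$ where the $t=\tfrac25$ values $\cos^{-1}(116/225)$ and $\cosh^{-1}(43897/6728)$ are meant; your closed-form step gets these right, and the paper's appendix lemmas use the correct constants, so its argument is nonetheless sound.)
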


\begin{proof}{\[
\alpha\left(\frac{1}{2}\right)-\alpha\left(\frac{2}{5}\right)>
\frac{\frac{27\pi}{20}}{\cosh^{-1}\left(\frac{137}{25}\right)}-
\frac{\frac{17\pi}{12}}{\cosh^{-1}\left(\frac{43897}{6728}\right)}
\hspace{.3cm}\text{ by Lemmas \ref{A1} and \ref{A2}}
\]
\vspace{.2cm}\\
\begin{align*}
\beta\left(0,\frac{1}{2}\right)-\beta\left(0,\frac{2}{5}\right)&=
\frac{2\cos^{-1}\left(\frac{5}{9}\right)}{\cosh^{-1}\left(\frac{137}{25}\right)}-
\frac{2\cos^{-1}\left(\frac{2}{3}\right)}{\cosh^{-1}\left(\frac{7}{2}\right)}
\\&<
\frac{\frac{19\pi}{30}}{\cosh^{-1}\left(\frac{137}{25}\right)}-
\frac{\frac{13\pi}{20}}{\cosh^{-1}\left(\frac{43897}{6728}\right)}
\hspace{.3cm}\text{ by Lemmas \ref{A3} and \ref{A4}}
\end{align*}
\vspace{.1cm}\\
\[
\left(\frac{137+36\sqrt{14}}{25}\right)^{46}<\left(\frac{43897+225\sqrt{37169}}{6728}\right)^{43}
\hspace{.4cm}\text{ by Lemma \ref{A7}}
\]
\begin{align*}
&\Longrightarrow
\frac{46}{43}=\frac{\frac{13}{20}-\frac{17}{12}}{\frac{19}{30}-\frac{27}{20}}<
\frac{\log\left(\frac{43897+225\sqrt{37169}}{6728}\right)}{\log\left(\frac{137+36\sqrt{14}}{25}\right)}=
\frac{\cosh^{-1}\left(\frac{43897}{6728}\right)}{\cosh^{-1}\left(\frac{137}{25}\right)}\\
&\Longrightarrow
\frac{\frac{27\pi}{20}}{\cosh^{-1}\left(\frac{137}{25}\right)}-
\frac{\frac{17\pi}{12}}{\cosh^{-1}\left(\frac{43897}{6728}\right)}>
\frac{\frac{19\pi}{30}}{\cosh^{-1}\left(\frac{137}{25}\right)}-
\frac{\frac{13\pi}{20}}{\cosh^{-1}\left(\frac{43897}{6728}\right)}
\end{align*}\\
The inequality follows.}
\end{proof}
\vspace{.1cm}

\begin{lemma}{$L$ is monotone decreasing on $(0,1]$.}
\label{L 1to1}
\end{lemma}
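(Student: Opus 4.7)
The plan is to reduce to the monotonicity of the rational function inside the $\cosh^{-1}$. Since $\cosh^{-1}$ is strictly increasing on $[1,\infty)$, it suffices to show that
\[
g(t) := \frac{p_1(t)}{p_2(t)} = \frac{1+8t^2+21t^4-2t^6}{2t^2(1+t^2)^2}
\]
is strictly decreasing on $(0,1]$. Because only even powers of $t$ appear, it is cleaner to substitute $u=t^2$ and work with
\[
\tilde g(u) = \frac{1+8u+21u^2-2u^3}{2u(1+u)^2} \qquad \text{on } u\in(0,1].
\]
Monotonicity of $t\mapsto t^2$ then passes the conclusion back to $L(t)$.

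Next I would compute $\tilde g'(u)$ by the quotient rule. A routine expansion (with a factor of $(1+u)$ cancelling between numerator and denominator) shows that
\[
\tilde g'(u) \;=\; \frac{-2\bigl(1+3u-5u^2+25u^3\bigr)}{4u^2(1+u)^3}.
\]
The denominator is manifestly positive for $u>0$, so the sign of $\tilde g'(u)$ is controlled by the sign of the cubic $h(u) := 1+3u-5u^2+25u^3$.

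The main (and only) remaining step is to verify $h(u)>0$ for $u\in(0,1]$. The derivative $h'(u) = 3-10u+75u^2$ is a quadratic with leading coefficient $75>0$ and discriminant $100 - 4\cdot75\cdot 3 = -800 < 0$, so $h'(u)>0$ everywhere. Hence $h$ is strictly increasing with $h(0)=1>0$, so $h(u)>0$ on $(0,\infty)$, and in particular on $(0,1]$. Therefore $\tilde g'(u)<0$ on $(0,1]$, so $\tilde g$, and hence $g$, and hence $L$, is strictly decreasing on $(0,1]$. I do not anticipate any real obstacle here beyond the bookkeeping of the quotient-rule computation; the discriminant observation makes the positivity of $h$ immediate without any further case analysis.
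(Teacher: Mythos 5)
Your proof is correct, and it is a genuinely different (and more rigorous) argument than the one in the paper. You substitute $u=t^2$, compute the derivative of the resulting rational function, factor out the common $(1+u)$, and reduce the question to positivity of the cubic $h(u)=1+3u-5u^2+25u^3$, which you settle cleanly by observing that $h'$ has negative discriminant. I have checked the quotient-rule computation: indeed $N'D-ND'=-2(1+u)(1+3u-5u^2+25u^3)$, so your formula for $\tilde g'(u)$ is right, and the rest follows.

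The paper instead records the inequalities $p_1(t)>p_2(t)>0$ and $p_1'(t)>p_2'(t)>0$ on $(0,1)$ and asserts that the monotone decrease of $p_1/p_2$ ``follows.'' As stated, that inference is not valid in general: $p_1=2+2t$ and $p_2=1+\tfrac{t}{2}$ satisfy both pairs of inequalities on $(0,1)$, yet $p_1/p_2$ is \emph{increasing}. What one actually needs is the sign of $p_1'p_2-p_1p_2'$, which is exactly what your derivative computation establishes. So your argument does not merely take a different route; it supplies the step that the published proof elides.
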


\begin{proof}{
Recall the definititions of $p_{1}(t)$ and $p_{2}(t)$, equations (\ref{p1}) and (\ref{p2}). 
We have:
\begin{align*}
p_{1}'(t)&=4t(4+21t^{2}-3t^{4})\\
p_{2}'(t)&=4t(1+4t^{2}+3t^{4})
\end{align*}

Since $t^{2}>t^{4}$ for $t\in(0,1)$, it is straightforward to check:

\begin{equation}
\label{p1p2}
p_{1}(t)>p_{2}(t)>0
\end{equation}
\begin{equation}
\label{p1'p2'}
p_{1}'(t)>p_{2}'(t)>0
\end{equation}
\vspace{.1cm}\\
Since $\cosh^{-1}$ is monotone increasing on $[1,\infty)$ it is thus enough to check that $\frac{p_{1}}{p_{2}}$ is monotone decreasing on $(0,1]$. This follows from equations (\ref{p1p2}) and (\ref{p1'p2'}).}
\end{proof}
\vspace{.1cm}

\begin{lemma}{For $t_{1},t_{2}\in(0,1)$ such that $t_{2}>t_{1}$, and for all $x\in(0,1)$, we have \[0<\beta(x,t_{2})-\beta(x,t_{1})<\beta(0,t_{2})-\beta(0,t_{1}).\]}
\label{beta all x}
\end{lemma}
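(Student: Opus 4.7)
The plan is to reduce both inequalities to a single monotonicity statement about an explicit one-variable function. First I would differentiate the defining formula (\ref{b}) in $x$, using the identity $\cosh^2 L - \cosh^2(xL) = \sinh^2 L - \sinh^2(xL)$, to obtain
\[
\partial_x \beta(x, t) = -\Phi(x, L(t)), \qquad \Phi(y, L) := \frac{\sinh(yL)}{\sqrt{\sinh^2 L - \sinh^2(yL)}}.
\]
Since $\beta(1, t) = \frac{1}{L}\cos^{-1}(1) = 0$, integrating gives the representation
\[
\beta(x, t) \;=\; \int_x^1 \Phi(y, L(t))\, dy.
\]
By Lemma \ref{L 1to1}, $L$ is strictly decreasing in $t$, so both of the claimed inequalities would follow from the single claim that $\Phi(y, \cdot)$ is strictly decreasing on $(0, \infty)$ for each fixed $y \in (0, 1)$: the left inequality becomes $\int_x^1 [\Phi(y, L(t_2)) - \Phi(y, L(t_1))]\, dy > 0$, while subtracting the corresponding integral representations on $[0,1]$ and $[x,1]$ rearranges the right inequality to $\int_0^x [\Phi(y, L(t_2)) - \Phi(y, L(t_1))]\, dy > 0$.

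The main analytic step, then, is to verify the monotonicity of $\Phi$ in $L$. I would write
\[
\Phi(y, L)^2 \;=\; \frac{1}{r(y, L)^2 - 1}, \qquad r(y, L) := \frac{\sinh L}{\sinh(yL)},
\]
reducing the claim to showing $r$ is strictly increasing in $L$ for each $y \in (0,1)$. A single differentiation reduces the sign of $\partial_L r$ to that of the numerator $\cosh L \sinh(yL) - y \sinh L \cosh(yL)$, which vanishes at $L = 0$; one further differentiation simplifies to $(1 - y^2)\sinh L \sinh(yL)$, strictly positive for $L > 0$ and $y \in (0, 1)$. Hence the numerator itself is positive on $(0, \infty)$, $r$ is strictly increasing, and $\Phi(y,\cdot)$ is strictly decreasing.

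I do not anticipate a serious obstacle: once the integral representation and the substitution $r = \sinh L / \sinh(yL)$ are in place, the sign analysis of the numerator via its own $L$-derivative is elementary, and the remaining deduction is a direct application of the fundamental theorem of calculus. The only conceptual ingredient beyond routine calculation is recognizing that the uniform-in-$x$ bound demanded by the lemma arises naturally from the pointwise monotonicity of a single integrand $\Phi(y, L)$ in $L$.
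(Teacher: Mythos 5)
Your proof is correct, and the underlying mathematics is essentially the paper's: both arguments come down to showing that $\partial_x\beta(x,t)$ is strictly decreasing in $t$, which, combined with Lemma \ref{L 1to1}, reduces to the elementary inequality $\cosh L\sinh(yL) - y\sinh L\cosh(yL) > 0$ for $L>0$, $y\in(0,1)$ (the paper packages this as $x^{-1}\tanh x$ being decreasing). Where you diverge is in the bookkeeping: the paper computes the sign of the mixed partial $\partial^2\beta/\partial x\partial t$, then establishes strict positivity of $\partial_t\beta$ via the limit $\lim_{x\to1}\partial_t\beta(x,t)=0$, and finally integrates in $x$; you instead use the boundary value $\beta(1,t)=0$ up front to write $\beta(x,t)=\int_x^1\Phi(y,L(t))\,dy$, after which both inequalities fall out of the single claim that $\Phi(y,\cdot)$ is decreasing. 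This is a cleaner organization that avoids the limit argument entirely, and your reduction to the monotonicity of $r(y,L)=\sinh L/\sinh(yL)$, verified by one further differentiation, makes the key inequality transparent rather than cited. No gap; the computation of $\partial_x\beta$, the identity $\cosh^2 L-\cosh^2(yL)=\sinh^2 L-\sinh^2(yL)$, and the sign analysis of $N(L)$ via $N(0)=0$, $N'(L)=(1-y^2)\sinh L\sinh(yL)>0$ all check out.
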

\begin{proof}{We compute:

\begin{align*}
\frac{\partial^{2}\beta}{\partial x\partial t}(x,t)&=
\left[\frac
{-L'(t)\sinh(2L(t))\cosh(xL(t))}
{2\left(\cosh^{2}L(t)-\cosh^{2}(xL(t))\right)^{\frac{3}{2}}}\right]\cdot
\left[x\tanh L(t)-\tanh(xL(t))\right]
\end{align*}
By Lemma \ref{L 1to1}, $L'(t)<0$ for all $t\in(0,1)$ and the first bracketed term is positive.

The function $x^{-1}\tanh x$ is decreasing for $x>0$, so for $x\in(0,1)$ we have
\[\frac{\tanh(xL(t))}{xL(t)}>\frac{\tanh L(t)}{L(t)}\]
and the second bracketed term is negative. We conclude that $\frac{\partial^{2}\beta}{\partial x\partial t}(x,t)<0$ for all $t,x\in(0,1)$. 
\vspace{.1cm}

Thus on the domain $(0,1)\times(0,1)$, we have:
\begin{enumerate}
\renewcommand{\labelenumi}{(\roman{enumi})}
\item For fixed $x$, the function $\frac{\partial\beta}{\partial x}(x,t)$ is decreasing in $t$.
\vspace{.1cm}
\item For fixed $t$, the function $\frac{\partial\beta}{\partial t}(x,t)$ is decreasing in $x$.\end{enumerate}

A straightforward computation shows that
\[\displaystyle\lim_{x\to1}\frac{\partial\beta}{\partial t}(x,t)=0.\]
By (ii), this limit is the infimum of $\frac{\partial\beta}{\partial t}(x,t)$ over $x\in(0,1)$. Thus $\frac{\partial\beta}{\partial t}(x,t)>0$ and $\beta(x,t)$ is increasing in $t$.

Suppose that $t_{2}>t_{1}$. By (i),
\[\frac{\partial}{\partial x}\left(\beta(x,t_{2})-\beta(x,t_{1})\right)<0\]
for all $x\in(0,1)$. We now have that $\beta(x,t_{2})-\beta(x,t_{1})$ is positive and monotone decreasing in $x$. The result follows.}\end{proof}

\begin{proof}[{Proof of Lemma \ref{R 1}}]
{For any $x\in(0,1)$, combining Lemmas \ref{alpha 1} and \ref{beta all x},
\begin{align*}
F\left(x,\frac{1}{2}\right)-F(x,1)&=\left(\alpha\left(\frac{1}{2}\right)-\alpha(1)\right)+\left(\beta(x,1)-\beta\left(x,\frac{1}{2}\right)\right)\\
&>\left(\alpha\left(\frac{1}{2}\right)-\alpha(1)\right)\;\;>\;\;0.
\end{align*}
Referring to the description of $R_{t}$ in Lemma \ref{coord Q F}, this inequality implies the containment $R_{1}\subset R_{\frac{1}{2}}$ with containment of vertical sides as desired.}
\end{proof}

\begin{proof}[{Proof of Lemma \ref{R 2/5}}]
{For any $x\in(0,1)$, combining Lemmas \ref{alpha beta 2/5} and \ref{beta all x},
\begin{align*}
F\left(x,\frac{1}{2}\right)-F\left(x,\frac{2}{5}\right)&=
\left(\alpha\left(\frac{1}{2}\right)-\alpha\left(\frac{2}{5}\right)\right)-
\left(\beta\left(x,\frac{1}{2}\right)-\beta\left(x,\frac{2}{5}\right)\right)\\
&>
\left(\alpha\left(\frac{1}{2}\right)-\alpha\left(\frac{2}{5}\right)\right)-
\left(\beta\left(0,\frac{1}{2}\right)-\beta\left(0,\frac{2}{5}\right)\right)\\
\\
&>\;\;0.
\end{align*}
The containment $R_{\frac{2}{5}}\subset R_{\frac{1}{2}}$ follows as above.}
\end{proof}

\section{A Family of Finite Covers of $M$}
\label{covers}
\begin{proof}[{Proof of Corollary \ref{cor}}]
{For each genus $g$, there is a finite cover $\Sigma_{g}\to\Sigma_{2}$. This can be seen by arranging $\Sigma_{g}$ with one `hole' in the center, and with the others forming a wheel around the center. Now rotation by $\frac{2\pi}{g-1}$ is an evident topological symmetry of the picture, and the quotient is $\Sigma_{2}$. The family $\left\{M_{n}\right\}_{n=0}^{\infty}$ is generated from this family of covers, as we now describe.

For each $g\ge2$, the annuli in $P$ lift to annuli $P_{g}\subset\Sigma_{g}$. Denote $\Sigma':=\Sigma_{g}\setminus P_{g}$ (and recall $\Sigma=\Sigma_{2}\setminus P$). We choose a fundamental domain for the covering $\Sigma'\to\Sigma$, as pictured in Figure \ref{coverpiece}, and denote it $B$. To do this carefully, one should choose a two-holed torus fundamental domain for $\Sigma_{g}\to\Sigma_{2}$, include only one of the two boundary components, and take the complement of $P_{g}$. Call this piece $B$. Because $B$ is connected and $\Sigma'$ is the union of finitely many copies of $B$, it is clear that the cover $\Sigma'$ is connected.

\begin{figure}[h]
	\begin{minipage}[]{.45\linewidth}
	\centering
	\includegraphics[height=5cm]{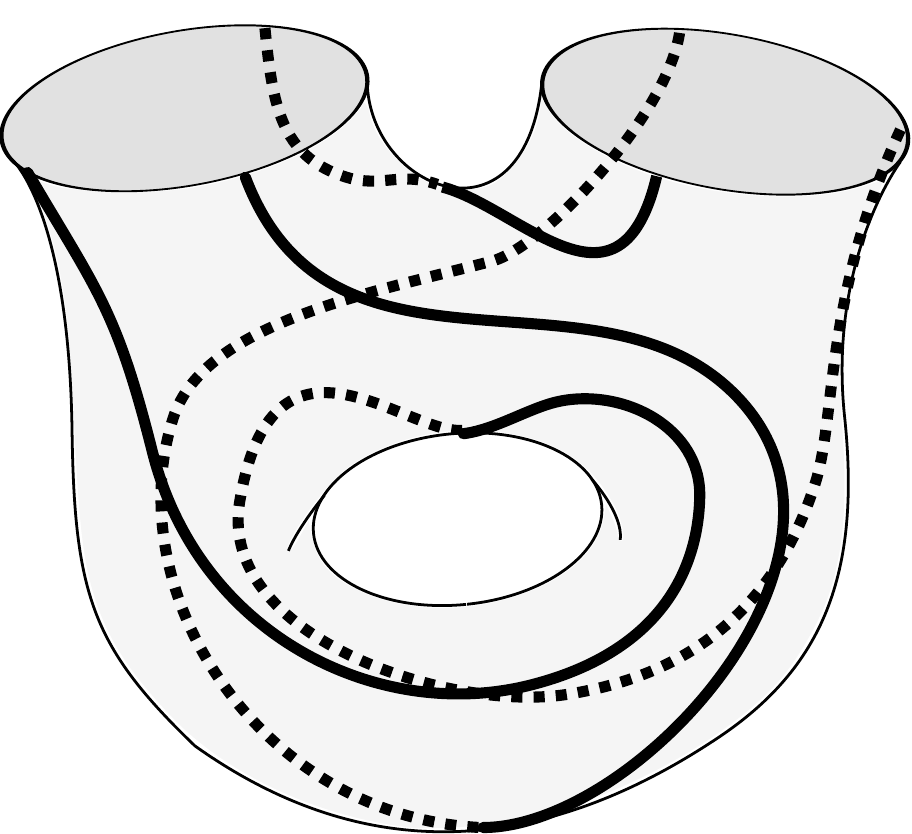}
	\caption{The fundamental domain $B$ for the covering $\Sigma'\to\Sigma$.}
		\label{coverpiece}
	\end{minipage}
	\hspace{.5cm}
	\begin{minipage}[]{.42\linewidth}
	\centering
	\includegraphics[width=6cm]{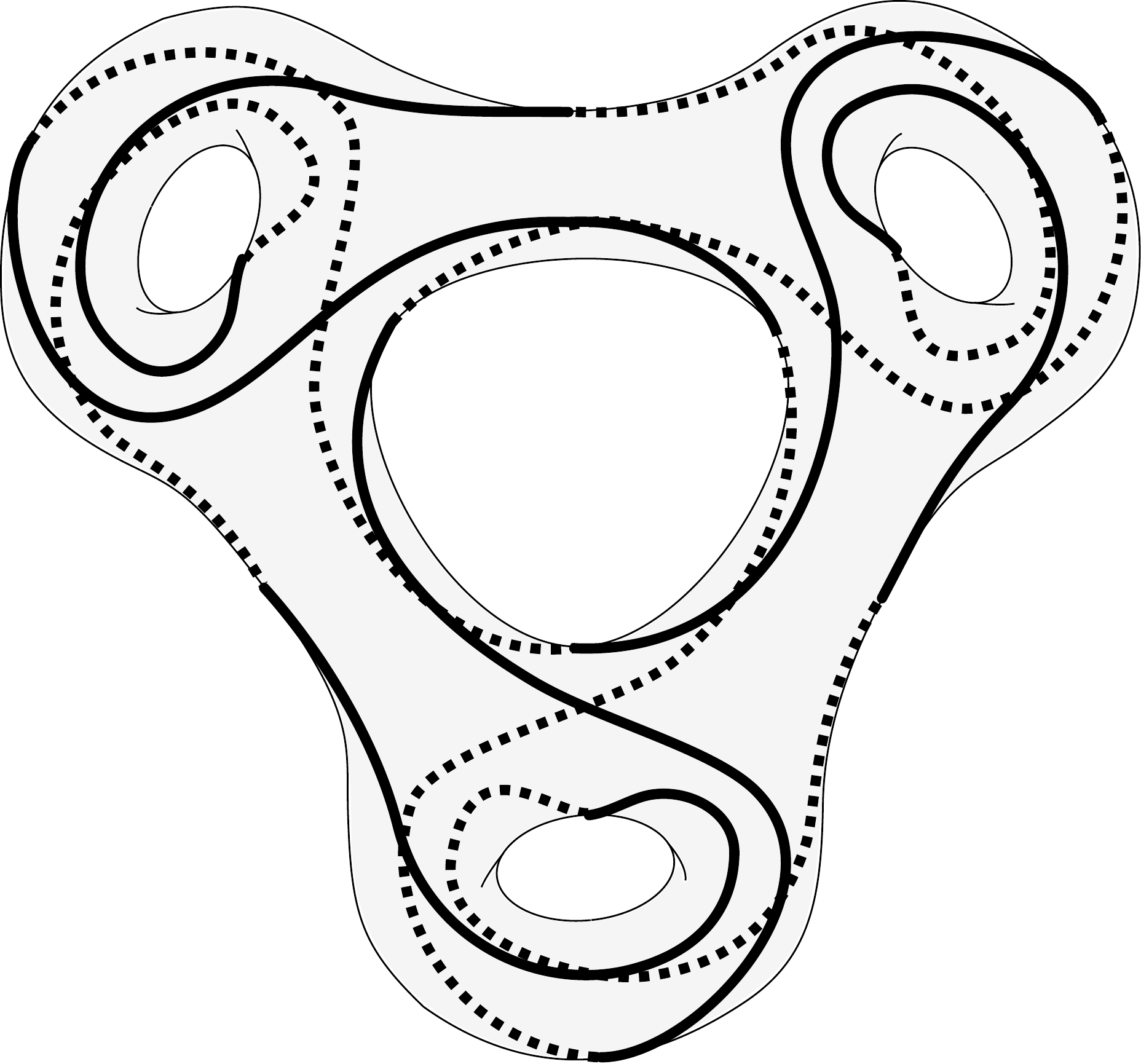}
	\caption{The cover $M_{4}$, the pared 3-manifold $(H_{4},P_{4})$. Note that $P_{4}$ has two connected components.}
		\label{coverpic}
	\end{minipage}
\end{figure}

The intersection of $P_{g}$ with the two-holed torus consists of three connected components. The gluing pattern of $g-1$ copies of $B$, in order to build $\Sigma'$, matches these components in a straightforward fashion. Upon inspection it is clear that $P_{2g}$ has two components and $P_{2g+1}$ has three. (See Figure \ref{coverpic} for a picture of $P_{4}$).

Recalling that $H_{g}$ indicates the closed genus $g$ handlebody, we consider the pared 3-manifolds $M_{n}:=\left(H_{n},P_{n}\right)$. The boundary $\partial M_{2g}$ is a genus $2g$ surface, with two non-separating disk-busting annuli deleted, and is thus homeomorphic to $\Sigma_{2g-2,4}$. Similarly, $\partial M_{2g+1}\cong\Sigma_{2g-2,6}$. Applying Proposition \ref{cover}, the corollary follows.}
\end{proof}

\section{Appendix}

Lemmas \ref{t0} through \ref{A7}, as they can clearly be verified by a finite list of computations involving integers and standard functions, are available in a Mathemetica notebook\footnote{as an ancillary file at arxiv.org, or at math.uic.edu/\textasciitilde gaster/}.

\begin{lemma}
\label{t0}
{$\frac{1}{2}\left(5+3\sqrt{3}-\sqrt{44+26\sqrt{3}}\right)<\frac{2}{5}$}
\end{lemma}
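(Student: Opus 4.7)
The plan is to isolate the nested radical and reduce the inequality to a trivial rational comparison via a single squaring step. Specifically, I would first rearrange the statement into the equivalent form
$$\frac{21}{5}+3\sqrt{3} \;<\; \sqrt{44+26\sqrt{3}},$$
obtained by multiplying through by $2$, moving the radical to the right-hand side, and combining the constants $5-\frac{4}{5}=\frac{21}{5}$. Both sides are positive (the left is approximately $9.4$ numerically, and the right is the square root of a positive quantity), so squaring is a valid equivalence.

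Squaring the left side yields
$$\left(\frac{21}{5}\right)^{2}+2\cdot\frac{21}{5}\cdot 3\sqrt{3}+27 \;=\; \frac{1116}{25}+\frac{126\sqrt{3}}{5}.$$
Comparing with $44+26\sqrt{3}=\frac{1100}{25}+\frac{130\sqrt{3}}{5}$, the inequality becomes $\frac{1116}{25}+\frac{126\sqrt{3}}{5}<\frac{1100}{25}+\frac{130\sqrt{3}}{5}$. Multiplying through by $25$ and collecting terms reduces this to $16<20\sqrt{3}$, equivalently $\frac{16}{25}<3$, which is immediate.

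There is essentially no obstacle to carrying this out: once the radical is isolated and both sides are seen to be positive, a single squaring decouples the rational and the $\sqrt{3}$ parts cleanly, leaving only an obvious rational inequality. The only care needed is the positivity check before squaring, which guarantees that the implications chain in both directions.
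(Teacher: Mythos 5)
Your proof is correct, and it is a genuine improvement in rigor/transparency over what the paper offers: the paper does not prove Lemma A.1 at all, but relegates it (together with Lemmas A.2--A.8) to a Mathematica notebook as a ``finite list of computations involving integers and standard functions.'' Your argument -- isolating the nested radical, verifying positivity of both sides, and squaring once to reduce to $16 < 20\sqrt{3}$ -- is exactly the clean hand verification one would want here, and all the arithmetic checks out ($\frac{441+675}{25}=\frac{1116}{25}$, $44=\frac{1100}{25}$, $26\sqrt{3}=\frac{130\sqrt{3}}{5}$, and $1116+630\sqrt{3}<1100+650\sqrt{3}\iff 16<20\sqrt{3}\iff \frac{16}{25}<3$). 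The only minor point is that the positivity of the right-hand side is immediate since it is a square root, while the left-hand side $\frac{21}{5}+3\sqrt{3}$ is manifestly a sum of positives, so the numerical estimate $\approx 9.4$ is unnecessary for the positivity check (though it is reassuring context).
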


\begin{lemma}
\label{A1}
{$\frac{11}{25}<\cos(\frac{7\pi}{20})$}
\end{lemma}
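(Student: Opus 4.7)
The plan is to reduce $\cos(7\pi/20) > 11/25$ to an elementary integer inequality via two successive squarings, using the classical exact value $\sin(\pi/5) = \tfrac{1}{4}\sqrt{10-2\sqrt{5}}$.

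First, both sides are positive (the angle $7\pi/20$ lies in the first quadrant and $11/25 < 1$), so the inequality is equivalent to $\cos^2(7\pi/20) > 121/625$. Applying the double-angle identity $\cos^2\theta = \tfrac{1}{2}(1 + \cos 2\theta)$ with $\theta = 7\pi/20$, and using
\[
\cos(7\pi/10) = \cos(\pi - 3\pi/10) = -\cos(3\pi/10) = -\sin(\pi/5),
\]
the claim becomes
\[
\frac{1}{2}\left(1 - \frac{\sqrt{10-2\sqrt{5}}}{4}\right) > \frac{121}{625},
\]
which rearranges to $625\sqrt{10-2\sqrt{5}} < 1532$. Both sides are positive, so squaring is faithful; rearranging once more, the lemma reduces to $781250\sqrt{5} > 1559226$, and a final squaring reduces this to the comparison $5 \cdot 781250^2 > 1559226^2$ between two fixed positive integers, which one verifies directly.

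The argument is conceptually trivial once the setup is complete; the only hazards are routine bookkeeping and the need to justify each squaring step by positivity of both sides (all immediate here). This matches the author's remark that Lemmas \ref{t0}--\ref{A7} are available in an ancillary Mathematica notebook: they are elementary in principle but tedious in practice. An alternative route would be to exploit the Chebyshev identity $\cos(5\theta) = 16\cos^5\theta - 20\cos^3\theta + 5\cos\theta$ with $5\theta = 7\pi/4$, obtaining a polynomial equation with integer coefficients satisfied by $\cos(7\pi/20)$ and then estimating the sign of this polynomial at $x = 11/25$ together with a monotonicity argument, but the reduction through $\sin(\pi/5)$ is more direct.
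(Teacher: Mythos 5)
Your reduction is correct: the chain $\cos(7\pi/20) > 11/25 \iff \cos^2(7\pi/20) > 121/625 \iff \frac{1}{2}\bigl(1-\frac{1}{4}\sqrt{10-2\sqrt5}\bigr) > \frac{121}{625} \iff 625\sqrt{10-2\sqrt5}<1532 \iff 781250\sqrt5>1559226 \iff 5\cdot 781250^2 > 1559226^2$ is valid (each squaring is justified by positivity of both sides), the identity $\cos(7\pi/10)=-\sin(\pi/5)=-\tfrac14\sqrt{10-2\sqrt5}$ is standard, and the final integer comparison $3051757812500 > 2431185719076$ holds. The paper itself offers no written proof of Lemmas \ref{t0}--\ref{A7}; it simply remarks that they are verifiable by a finite list of arithmetic computations and points to an ancillary Mathematica notebook, so your argument is in the same spirit but is the actual hand-checkable derivation the paper elides.
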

\begin{lemma}
\label{A2}
{$\frac{1136}{4205}>\cos(\frac{5\pi}{12})$}
\end{lemma}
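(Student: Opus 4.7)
The plan is to reduce the inequality $\frac{1136}{4205} > \cos\left(\frac{5\pi}{12}\right)$ to an inequality among integers that can be checked by direct arithmetic, thereby replacing the transcendental function by an algebraic number.

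First, I would use the complementary-angle and angle-subtraction identities to obtain the exact value
\[
\cos\left(\tfrac{5\pi}{12}\right) = \sin\left(\tfrac{\pi}{12}\right) = \sin\left(\tfrac{\pi}{4}-\tfrac{\pi}{6}\right) = \frac{\sqrt{6}-\sqrt{2}}{4}.
\]
After rationalizing via $(\sqrt{6}-\sqrt{2})(\sqrt{6}+\sqrt{2}) = 4$, the claim is equivalent to
\[
\sqrt{6}+\sqrt{2} > \frac{4205}{1136}.
\]
Since both sides are positive, I would square once to obtain $8 + 4\sqrt{3} > \left(\tfrac{4205}{1136}\right)^2$, i.e.
\[
4\sqrt{3} > \frac{4205^2 - 8\cdot 1136^2}{1136^2} = \frac{7358057}{1290496},
\]
and then square a second time. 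This yields the purely integer inequality
\[
48\cdot(1290496)^2 > (7358057)^2,
\]
which is a finite check: one computes both sides explicitly and compares. A brief numerical sanity check shows the left side is near $7.99\times 10^{13}$ while the right is near $5.41\times 10^{13}$, so the inequality holds with substantial margin.

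There is no conceptual obstacle; the entire content is the algebraic simplification above. The only step requiring care is signs when squaring, which is handled by noting that at each stage both sides are manifestly positive (one uses $4205/1136 < 4 < \sqrt{6}+\sqrt{2}$ crudely, and $8\cdot 1136^2 < 4205^2$ for the second squaring). As the author notes in the Appendix preamble, this is exactly the sort of statement that is verified by a finite arithmetic computation and can equally well be confirmed by the supplied Mathematica notebook.
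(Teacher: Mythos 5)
Your reduction is correct and is exactly the kind of finite arithmetic verification the paper intends for its Appendix lemmas (the paper itself gives no inline proof, deferring to a Mathematica notebook). Using $\cos\left(\tfrac{5\pi}{12}\right)=\tfrac{\sqrt{6}-\sqrt{2}}{4}=\tfrac{1}{\sqrt{6}+\sqrt{2}}$, the chain $\tfrac{1136}{4205}>\tfrac{1}{\sqrt{6}+\sqrt{2}} \iff \sqrt{6}+\sqrt{2}>\tfrac{4205}{1136} \iff 8+4\sqrt{3}>\tfrac{4205^2}{1136^2} \iff 4\sqrt{3}>\tfrac{7358057}{1290496} \iff 48\cdot 1290496^2>7358057^2$ is sound, and each squaring is justified because both sides are positive at every stage. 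One small slip in a parenthetical: you write $4<\sqrt{6}+\sqrt{2}$ as a crude bound, but in fact $\sqrt{6}+\sqrt{2}\approx 3.863<4$. This does not affect the argument (positivity of both sides of $\sqrt{6}+\sqrt{2}>\tfrac{4205}{1136}$ is immediate without any comparison to $4$), but the stated crude inequality should either be dropped or corrected, e.g.\ to $\tfrac{4205}{1136}<4$ and $\sqrt{6}+\sqrt{2}>0$.
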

\begin{lemma}
\label{A3}
{$\frac{5}{9}>\cos(\frac{19\pi}{60})$}
\end{lemma}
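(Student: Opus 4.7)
The plan is to reduce the inequality to a well-known rational bound for $\pi$. The key observation is that $19\pi/60 = \pi/3 - \pi/60$, so the cosine subtraction formula gives
\[
\cos\!\left(\tfrac{19\pi}{60}\right) \;=\; \tfrac{1}{2}\cos\!\left(\tfrac{\pi}{60}\right) \;+\; \tfrac{\sqrt{3}}{2}\sin\!\left(\tfrac{\pi}{60}\right),
\]
and the target inequality $\cos(19\pi/60) < 5/9$ becomes $\cos(\pi/60) + \sqrt{3}\,\sin(\pi/60) < 10/9$.

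First I would apply the elementary bounds $\cos x \le 1$ and $\sin x \le x$ for $x \ge 0$ at the small angle $x = \pi/60$, yielding the upper estimate $\cos(\pi/60) + \sqrt{3}\,\sin(\pi/60) \le 1 + \sqrt{3}\,\pi/60$. It then suffices to prove $\sqrt{3}\,\pi/60 < 1/9$, equivalently $\sqrt{3}\,\pi < 20/3$, or after squaring, $27\pi^2 < 400$. The classical rational bound $\pi < 22/7$ gives $\pi^2 < 484/49 < 10$, so $27\pi^2 < 270 < 400$ and the argument is complete.

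The step I expect to require the most attention is confirming that the crude bounds $\cos x \le 1$ and $\sin x \le x$ leave sufficient margin, since the two sides of the original inequality differ by only about $0.011$. A quick numerical check shows that the upper bound $\tfrac{1}{2}(1 + \sqrt{3}\,\pi/60) \approx 0.5454$ sits comfortably below $5/9 \approx 0.5556$, so the crude approach suffices without any appeal to higher-order Taylor information. If a sharper estimate were ever needed, one could use $\cos x \le 1 - x^2/2 + x^4/24$ (valid since the Maclaurin series of $\cos$ has terms of alternating sign and decreasing magnitude at $x = \pi/60$), or, in the spirit of the author's other computational lemmas, compute $\cos(19\pi/60)$ in closed form via $57^\circ = 30^\circ + (45^\circ - 18^\circ)$ using $\cos 18^\circ = \tfrac{1}{4}\sqrt{10 + 2\sqrt{5}}$ and $\sin 18^\circ = \tfrac{1}{4}(\sqrt{5}-1)$, then clear radicals by repeated squaring to produce an explicit integer inequality.
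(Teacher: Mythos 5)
Your proof is correct, and it takes a genuinely different route from the paper, which offers no written argument at all: the Appendix simply states that Lemmas~\ref{t0} through \ref{A7} ``can clearly be verified by a finite list of computations involving integers and standard functions'' and points the reader to a Mathematica notebook. Your version supplies an actual hand proof. The decomposition $\tfrac{19\pi}{60}=\tfrac{\pi}{3}-\tfrac{\pi}{60}$ is well chosen: it isolates the known values $\cos(\pi/3)=\tfrac12$, $\sin(\pi/3)=\tfrac{\sqrt3}{2}$, and pushes all the uncertainty onto the tiny angle $\pi/60$, where the crude bounds $\cos x\le 1$ and $\sin x\le x$ are tight enough. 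The final reduction to $27\pi^2<400$ via $\pi<22/7$ is clean and leaves a comfortable margin ($27\cdot\tfrac{484}{49}\approx 266.7$). What the paper's approach buys is uniformity -- all seven lemmas are dispatched identically by a machine computation -- whereas your argument buys transparency and self-containment, but would need fresh ideas for, say, Lemma~\ref{A5} or \ref{A7}, where the numbers $(7+3\sqrt5)^{27}$ and $(137+36\sqrt{14})^{20}$ don't admit an analogous trigonometric simplification. One tiny expository point: when you invoke $\sin x\le x$ and $\cos x\le1$, it is worth saying explicitly that these hold with strict inequality at $x=\pi/60>0$ (e.g.\ $\cos(\pi/60)<1$), so the chain really delivers the strict inequality the lemma asserts; your ``$\le$'' followed by a strict comparison at the end does yield strictness, but the reader has to notice it.
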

\begin{lemma}
\label{A4}
{$\frac{116}{225}<\cos(\frac{13\pi}{40})$}
\end{lemma}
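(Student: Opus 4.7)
The inequality $\tfrac{116}{225}<\cos(\tfrac{13\pi}{40})$ is a concrete numerical assertion, true with a margin of roughly $0.007$. My plan is to eliminate the transcendental cosine by iterating the half-angle identity until only algebraic numbers remain, then verify the resulting polynomial inequality over $\mathbb{Q}(\sqrt{5})$ by finite arithmetic on integers of modest size.

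First I would observe that both sides are positive (since $0<\tfrac{13\pi}{40}<\tfrac{\pi}{2}$), so the inequality is equivalent to
\[
\left(\tfrac{116}{225}\right)^{2} \;<\; \cos^{2}\!\bigl(\tfrac{13\pi}{40}\bigr) \;=\; \tfrac{1+\cos(13\pi/20)}{2}.
\]
Using $\cos(13\pi/20)=-\cos(7\pi/20)$, this rearranges to $\cos(7\pi/20)<C_{1}$ for an explicit positive rational $C_{1}$. Since $7\pi/20\in(0,\pi/2)$, both sides are again positive, so I square once more: $\cos^{2}(7\pi/20)=\tfrac{1+\cos(7\pi/10)}{2}$, with $\cos(7\pi/10)=-\sin(\pi/5)$ by $\cos(\tfrac{\pi}{2}+x)=-\sin x$. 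Substituting the classical value $\sin(\pi/5)=\tfrac{1}{4}\sqrt{10-2\sqrt{5}}$ reduces the claim to
\[
\tfrac{1}{8}\bigl(4-\sqrt{10-2\sqrt{5}}\bigr) \;<\; C_{1}^{\,2},
\]
equivalently $\sqrt{10-2\sqrt{5}}>4-8C_{1}^{\,2}$. After checking the right-hand side is positive, squaring gives an inequality in $\mathbb{Q}(\sqrt{5})$; isolating the $\sqrt{5}$-term and squaring once more yields a single integer inequality, which I would verify by hand.

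The key steps in order are: (i) square the original inequality and use $\cos(\pi-x)=-\cos x$; (ii) square again and use $\cos(\tfrac{\pi}{2}+x)=-\sin x$; (iii) substitute the closed form for $\sin(\pi/5)$; (iv) two further squarings interleaved with isolating the radical, each justified by a positivity check supported by crude decimal estimates. The main obstacle is bookkeeping rather than difficulty: at each squaring I must verify that the bound introduced has the expected sign, and I must keep the intermediate rational numbers small enough that the final integer check is tractable. The comfortable margin of $\sim 0.007$ in the original inequality ensures that none of these squarings is tight, so polynomial bounds of moderate size suffice.

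An alternative route, which avoids the radical altogether, exploits $\tfrac{13\pi}{40}=\tfrac{\pi}{3}-\tfrac{\pi}{120}$ and the angle-addition identity to write
\[
\cos\!\bigl(\tfrac{13\pi}{40}\bigr) \;=\; \tfrac{1}{2}\cos\!\bigl(\tfrac{\pi}{120}\bigr) + \tfrac{\sqrt{3}}{2}\sin\!\bigl(\tfrac{\pi}{120}\bigr).
\]
Applying the Taylor bounds $\cos x\ge 1-x^{2}/2$ and $\sin x\ge x-x^{3}/6$ (valid for $x>0$) at $x=\pi/120$ yields an explicit lower bound depending only on rational multiples of $\pi$ and $\sqrt{3}$, and standard decimal estimates $3.14159<\pi<3.14160$, $1.7320<\sqrt{3}$ close the gap to $116/225$ with room to spare. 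Either route is a fully rigorous, hand-checkable substitute for the Mathematica verification cited in the paper.
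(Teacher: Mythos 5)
The paper offers no argument for this lemma beyond a pointer to a Mathematica notebook, so your task is really to supply the hand-verification that the paper outsources. Both of your routes are mathematically sound. In the first (iterated half-angle) route the numerology checks out: $C_1=1-2(116/225)^2=23713/50625\approx 0.4684$, while $\cos(7\pi/20)\approx 0.454$, so each intermediate positivity claim holds, and reducing to $\sin(\pi/5)=\tfrac14\sqrt{10-2\sqrt5}$ is correct. However, after the full cascade of squarings and radical-isolations the final integer comparison involves quantities on the order of $D_1^2$ with $D_1=50625^4$, i.e.\ roughly $38$-digit integers, which is rather more than ``modest size'' for a hand check. Your second route --- writing $\tfrac{13\pi}{40}=\tfrac{\pi}{3}-\tfrac{\pi}{120}$, expanding by the cosine addition formula, and then using the one-sided Taylor bounds $\cos x\ge 1-x^2/2$ and $\sin x\ge x-x^3/6$ together with crude rational bounds on $\pi$ and $\sqrt3$ --- is cleaner, requires no nested radicals, and lands comfortably within the $\approx 0.007$ margin: the bound one obtains is about $0.5225$ against $116/225\approx 0.5156$. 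I would present the second route as the primary argument and the first as an alternative; either one is a legitimate substitute for the paper's computer verification, which is the genuine difference between your proposal and the paper.
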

\begin{lemma}
\label{A5}
{$25^{20}(7+3\sqrt{5})^{27}>2^{27}(137+36\sqrt{14})^{20}$}
\end{lemma}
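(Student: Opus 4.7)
The plan is to reduce this to a finite numerical comparison of positive rationals. First, I would rewrite the inequality in the more suggestive form
\[
\left(\frac{7+3\sqrt{5}}{2}\right)^{27} > \left(\frac{137+36\sqrt{14}}{25}\right)^{20},
\]
which is the form in which Lemma \ref{A5} is actually applied in the proof of Lemma \ref{alpha 1}. This version is natural because $\cosh^{-1}(x)=\log\!\bigl(x+\sqrt{x^2-1}\bigr)$ together with the identities $(7/2)^2-1=45/4$, $\sqrt{45}=3\sqrt{5}$, $(137/25)^2-1=18144/625$, and $\sqrt{18144}=36\sqrt{14}$ shows that the inequality is equivalent to $27\cosh^{-1}(7/2)>20\cosh^{-1}(137/25)$, so it enters exactly where needed.

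Next, I would pick explicit rational bounds for the two algebraic numbers and verify the inequality with those in place. For instance, $2236^2 = 4{,}999{,}696 < 5{,}000{,}000$ gives $\sqrt{5}>\frac{2236}{1000}$, and $3742^2 = 14{,}002{,}564 > 14{,}000{,}000$ gives $\sqrt{14}<\frac{3742}{1000}$. These yield explicit rationals $p$ and $q$ with
\[
\frac{7+3\sqrt{5}}{2} > p \approx 6.854,\qquad \frac{137+36\sqrt{14}}{25} < q \approx 10.868,
\]
so it suffices to check $p^{27}\ge q^{20}$. A crude logarithmic estimate gives $p^{27}\approx 10^{22.6}$ and $q^{20}\approx 10^{20.7}$, a margin of roughly a factor of seventy; this cushion means the chosen four-digit rational bounds are more than adequate, and no delicate choice of approximants is required.

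The final step is then a comparison of two explicit positive rationals $p^{27}$ and $q^{20}$, which after clearing denominators is a comparison of two specific positive integers. This is a routine but large computation (numerators on the order of a few dozen decimal digits), and is precisely the kind of check the author has collected into the companion Mathematica notebook referenced at the head of the appendix. The only obstacle is computational bulk rather than mathematical content: every inequality involved is an ordinary ordering of positive rationals, and no delicate estimation or analytic input is needed. An alternative route avoids rational approximations entirely — write $(7+3\sqrt{5})^{27}=M+N\sqrt{5}$ and $(137+36\sqrt{14})^{20}=P+Q\sqrt{14}$ with $M,N,P,Q\in\mathbb{Z}_{>0}$, isolate one radical, and square to obtain a polynomial inequality over $\mathbb{Q}$ — but this produces comparably large integers with no conceptual simplification, so I would defer, as the author does, to the machine verification.
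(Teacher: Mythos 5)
Your proposal is correct and matches the paper's approach: the paper offers no argument beyond the remark that Lemmas~\ref{t0}--\ref{A7} are finite integer/standard-function computations deferred to a Mathematica notebook, and you reduce Lemma~\ref{A5} to exactly that kind of check. Your sanity checks (the four-digit rational bounds $\sqrt{5}>2.236$, $\sqrt{14}<3.742$, and the roughly $\times 70$ margin between $p^{27}$ and $q^{20}$) are accurate and confirm the cushion is ample, so the rational approximation step is safe; the rest is the same machine-verifiable integer comparison the author intends.
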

\begin{lemma}
\label{A7}
{$6728^{43}(137+36\sqrt{14})^{46}<25^{46}(43897+225\sqrt{37169})^{43}$}
\end{lemma}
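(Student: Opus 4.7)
The statement to prove is the purely arithmetic inequality
$$6728^{43}(137+36\sqrt{14})^{46} < 25^{46}(43897+225\sqrt{37169})^{43},$$
an inequality between specific algebraic numbers. My plan has three steps.

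\textbf{Step 1 (restate in logarithmic form).} I would first uncover the structure behind the particular constants by verifying two algebraic identities:
$$137^2 - 14\cdot 36^2 = 18769 - 18144 = 625 = 25^2,$$
$$43897^2 - 37169\cdot 225^2 = (43897-6728)(43897+6728) - 37169\cdot 225^2 = 37169\cdot 50625 - 37169\cdot 50625 + 6728^2 = 6728^2.$$
(The second line uses the factorisation $43897^2 - 6728^2 = 37169\cdot 50625$ and $50625 = 225^2$.) These identities show that $(137+36\sqrt{14})/25 = 137/25 + \sqrt{(137/25)^2 - 1}$ and $(43897+225\sqrt{37169})/6728 = 43897/6728 + \sqrt{(43897/6728)^2 - 1}$, so the desired inequality is equivalent, after taking logarithms and dividing out the $6728^{43}\cdot 25^{46}$ factors, to
$$46\,\cosh^{-1}(137/25) < 43\,\cosh^{-1}(43897/6728).$$
This matches exactly the comparison needed at the end of the proof of Lemma~\ref{alpha beta 2/5}, and confirms that the inequality encodes a hyperbolic-length comparison rather than an arbitrary numerical coincidence.

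\textbf{Step 2 (reduce to a rational inequality).} To verify the restated inequality rigorously, I would replace $\sqrt{14}$ by an explicit rational upper bound $p/q$ and $\sqrt{37169}$ by an explicit rational lower bound $r/s$, chosen with enough precision (for instance via continued-fraction convergents) that the substitution preserves the direction of the inequality. Substituting into the original and clearing denominators reduces the claim to a comparison of two explicit positive integers, which in principle is decidable by finite integer arithmetic. Equivalently, one can expand $(137+36\sqrt{14})^{46} = P_1 + Q_1\sqrt{14}$ and $(43897+225\sqrt{37169})^{43} = P_2 + Q_2\sqrt{37169}$ with $P_i,Q_i \in \mathbb{Z}_{>0}$, then bound $\sqrt{14}$ above and $\sqrt{37169}$ below to compare the two quantities.

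\textbf{Step 3 (numerical verification) and main obstacle.} The resulting integer inequality is, in practice, best checked with a computer algebra system, which is why the paper defers to the referenced Mathematica notebook. The main obstacle to a by-hand proof is precision: numerically the two sides of the logarithmic inequality differ only by about a part in $10^{3}$ (one computes $46\cosh^{-1}(137/25) \approx 110.06$ versus $43\cosh^{-1}(43897/6728) \approx 110.24$), so the rational approximations must be taken quite tight. Combined with the exponents $43$ and $46$, the integers produced after clearing denominators are enormous, making a hand-verification mechanical but exhausting. Conceptually, however, the proof is a finite symbolic computation with no analytic input beyond the two norm-form identities established in Step~1.
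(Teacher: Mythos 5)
Your proposal is correct and takes essentially the same approach as the paper, which simply defers this arithmetic inequality to a finite symbolic computation carried out in an accompanying Mathematica notebook. Your Step 1 (the norm-form identities $137^2-14\cdot 36^2=25^2$ and $43897^2-37169\cdot 225^2=6728^2$, recasting the claim as $46\,\cosh^{-1}(137/25)<43\,\cosh^{-1}(43897/6728)$) is a useful supplement that makes explicit the provenance of the constants from Lemma~\ref{alpha beta 2/5}, but the final verification you propose is the same computer check the paper invokes.
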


\end{document}